\DeclareMathAlphabet{\pazocal}{OMS}{zplm}{m}{n}
\title[Finiteness criteria for Gorenstein homological dimension of groups]{Finiteness criteria for Gorenstein homological dimension and some invariants of groups}
\author{Ilias Kaperonis and Dimitra-Dionysia Stergiopoulou}
\keywords{Group ring, characteristic module, Gorenstein homological dimension, Gorenstein weak global dimension, generalized homological dimension, $\textsc{LH}\mathfrak{F}$ group, group of type $\Phi$}
\subjclass{Primary: 16E05, 16E10, 18G20, 18G25}
\thanks{Corresponding Author: Dimitra-Dionysia Stergiopoulou}
\newtheorem{Lemma}{Lemma}[section]
\newtheorem{Proposition}[Lemma]{Proposition}
\newtheorem{Theorem}[Lemma]{Theorem}
\newtheorem{Corollary}[Lemma]{Corollary}
\newtheorem{Remark}[Lemma]{Remark}
\newtheorem{Definition}[Lemma]{Definition}
\begin{document}

\begin{abstract} In this paper, we study finiteness criteria for the Gorenstein homological dimension of groups over a commutative ring of finite Gorenstein weak global dimension and provide estimates for the Gorenstein weak global dimension of group rings. As a result, we obtain Gorenstein analogues of well known properties in classical homological algebra over large families of infinite groups. Moreover, we prove that over a commutative ring of finite Gorenstein  weak global dimension, the Gorenstein cohomological dimension of a group $G$ bounds its Gorenstein homological dimension. Finally, we compare the generalized cohomological dimension and the generalized homological dimension of a group.
\end{abstract}

\maketitle
%\tableofcontents

\section{Introduction}
The concept of $G$-dimension for commutative Noetherian rings was introduced by Auslander and Bridger \cite{AB}; it was extended to modules over any ring through the notion of Gorenstein projective modules by Enochs and Jenda \cite{EJ,EJ2}.  Enochs and Jenda also defined the notions of Gorenstein injective and Gorenstein flat modules. The relative homological dimensions based on these modules were defined and studied in \cite{H1}, which is the standard reference for these notions. The Gorenstein homological dimension of a group $G$ over $\mathbb{Z}$ was defined in \cite{Asa} as the Gorenstein flat dimension of the trivial $\mathbb{Z}G$-module $\mathbb{Z}$ and yields a generalization of the classical homological dimension of a group $G$. Similarly, the Gorenstein homological dimension $\textrm{Ghd}_k G$ of a group $G$ over a commutative ring $k$ is defined as the Gorenstein flat dimension of the trivial $kG$-module $k$.

This paper considers a commutative ring $k$ of finite Gorenstein weak global dimension and proves finiteness criteria for the Gorenstein homological dimension of a group $G$ over $k$.  A central role in the characterization of the finiteness of $\textrm{Ghd}_k G$ is played by the notion of weak characteristic module for the group $G$ over $k$, which is introduced here (see Definition \ref{defi}). It generalizes the notion of characteristic module which was used in \cite{BDT, ET, Tal} to prove properties of the Gorenstein cohomological dimension and recently of the projectively coresolved Gorenstein dimension of groups \cite{St}. Part of our motivation comes from \cite[Section 2]{St}, where finiteness criteria for the projectively coresolved Gorenstein flat dimension and the Gorenstein cohomological dimension of groups are given.

Let us describe the main results of this paper in some detail. Many of them require the finiteness of the invariant $\textrm{sfli}k$ for the commutative ring $k$. This is defined as the supremum of the flat lengths (dimensions) of injective left $k$-modules; it generalizes the invariant $\textrm{spli}k$ defined by Gedrich and Gruenberg \cite{GG} as the supremum of the projective lengths (dimensions) of injective left $k$-modules. As a concequence of \cite[Theorem 2.4]{CET}, the finiteness of $\textrm{sfli}k$ is equivalent to the finiteness of the Gorenstein weak global dimension of $k$. The first main result of this paper compares the Gorenstein homological dimension and the Gorenstein cohomological dimension of a group $G$ over a commutative ring $k$ of finite Gorenstein weak global dimension (see Theorem \ref{prop312}).

\begin{Theorem}Let $k$ be a commutative ring such that $\textrm{sfli}k<\infty$ and $G$ be a group. Then, $\textrm{Ghd}_k G \leq \textrm{Gcd}_k G$.
\end{Theorem}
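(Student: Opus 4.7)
The plan is to reduce the problem to a comparison of ordinary projective and flat dimensions by passing through the notion of a (weak) characteristic module for $G$ over $k$. We may assume throughout that $n := \textrm{Gcd}_k G < \infty$, since otherwise there is nothing to prove.

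First I would produce a characteristic module $M$ for $G$ over $k$ witnessing $\textrm{Gcd}_k G = n$, i.e., a $kG$-module that is $k$-projective, satisfies the axioms of \cite{BDT, ET, Tal}, and has $\textrm{pd}_{kG} M = n$. This step relies on the finiteness criteria for the Gorenstein cohomological dimension established (in this paper or in the cited references) under the hypothesis $\textrm{sfli}\, k < \infty$. Since any module of finite projective dimension has flat dimension bounded by the same number, we then automatically obtain $\textrm{fd}_{kG} M \leq \textrm{pd}_{kG} M = n$.

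Next I would verify that $M$ also qualifies as a weak characteristic module in the sense of Definition \ref{defi}: its defining axioms should relax the $k$-projectivity to $k$-flatness and the Ext-vanishing requirements to Tor-vanishing, so a genuine characteristic module should satisfy them a fortiori. Applying the finiteness criterion that expresses $\textrm{Ghd}_k G$ as the flat dimension over $kG$ of a weak characteristic module, I conclude
$$\textrm{Ghd}_k G \leq \textrm{fd}_{kG} M \leq \textrm{pd}_{kG} M = n = \textrm{Gcd}_k G,$$
as required. An alternative route would be to take a Gorenstein projective resolution of $k$ of length $n$ and argue that, because $\textrm{sfli}\, k < \infty$, each term is simultaneously Gorenstein flat (via the identification of Gorenstein projective modules with projectively coresolved Gorenstein flat modules); but the weak characteristic module route is more in keeping with the framework of the paper.

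The main obstacle lies in the first step: one has to ensure that a characteristic module $M$ with $\textrm{pd}_{kG} M = \textrm{Gcd}_k G$ genuinely exists under the relatively weak assumption $\textrm{sfli}\, k < \infty$, rather than the stronger $\textrm{spli}\, k < \infty$ appearing in some of the classical references. Once this existence and the characterisation of $\textrm{Ghd}_k G$ through weak characteristic modules are in place, the desired inequality follows from the elementary bound $\textrm{fd} \leq \textrm{pd}$.
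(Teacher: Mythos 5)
Your proposal is correct and follows essentially the same route as the paper: under $\textrm{sfli}\,k<\infty$ one produces a characteristic module $A$ with $\textrm{pd}_{kG}A=\textrm{Gcd}_kG$ (the paper invokes \cite[Proposition 2.18(ii)]{St} for exactly this), notes that it is a fortiori a weak characteristic module (a $k$-split monomorphism is $k$-pure and projective implies flat, as in Remark \ref{remfinal}), and then applies the bound $\textrm{Ghd}_kG\leq\textrm{fd}_{kG}A\leq\textrm{pd}_{kG}A$ coming from Corollary \ref{cor1}.
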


Our second main result provides finiteness criteria for the Gorenstein homological dimension of a group $G$ over a commutative ring $k$ of finite Gorenstein weak global dimension (see Theorem \ref{theo310}).

\begin{Theorem}Let $k$ be a commutative ring such that $\textrm{sfli}k<\infty$. Then, the following conditions on a group $G$ are equivalent:
	\begin{itemize}
		\item[(i)] $\textrm{Gfd}_{kG}M<\infty$ for every $kG$-module $M$.
		\item[(ii)] $\textrm{Ghd}_k G <\infty$.
		\item[(iii)] There exists a short exact sequence of $kG$-modules $0\rightarrow k \rightarrow A \rightarrow \overline{A} \rightarrow 0$, where $\textrm{fd}_{kG}A <\infty$ and the $kG$-module $\overline{A}$ is $PGF$.
		\item[(iv)]There exists a short exact sequence of $kG$-modules $0\rightarrow k \rightarrow A' \rightarrow \overline{A'} \rightarrow 0$, where $\textrm{fd}_{kG}A' <\infty$ and the $kG$-module $\overline{A'}$ is Gorenstein flat.
		\item[(v)] There exists a $k$-split $kG$-monomorphism $\iota: k \rightarrow A$, where $\textrm{fd}_{kG}A <\infty$ and the $kG$-module ${A}$ is $k$-projective.
		\item[(vi)] There exists a $k$-pure $kG$-monomorphism $\iota: k \rightarrow A'$, where $\textrm{fd}_{kG}A' <\infty$ and the $kG$-module ${A'}$ is $k$-flat.
		\item[(vii)] $\textrm{sfli}(kG) <\infty$.
	\end{itemize}
	Moreover, in this case, every Gorenstein projective $kG$-module is Gorenstein flat and hence we have $\textrm{Gfd}_{kG}M\leq\textrm{Gpd}_{kG}M$ for every $kG$-module $M$.
\end{Theorem}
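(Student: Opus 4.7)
The plan is to prove the equivalence of conditions (i)--(vii) by a cycle of implications, leaning on two principal tools: the theorem of Christensen--Estrada--Thompson \cite{CET} identifying finiteness of the Gorenstein weak global dimension of a ring with finiteness of its $\textrm{sfli}$, and the weak characteristic module machinery developed earlier in this paper. Several links are essentially formal. The implication $(i)\Rightarrow(ii)$ comes from specializing to $M=k$; $(iii)\Rightarrow(iv)$ follows from the general inclusion of $PGF$ modules in the class of Gorenstein flat modules; $(v)\Rightarrow(vi)$ holds because $k$-projective implies $k$-flat and $k$-split implies $k$-pure; and the equivalence $(i)\Leftrightarrow(vii)$, applied to the ring $kG$, is precisely \cite[Theorem 2.4]{CET}.

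For $(ii)\Rightarrow(iv)$, if $\textrm{Gfd}_{kG}k=n<\infty$, a standard construction yields a short exact sequence $0\to k\to A\to\bar A\to 0$ with $\textrm{fd}_{kG}A\le n$ and $\bar A$ Gorenstein flat, obtained by choosing a flat pre-envelope of a Gorenstein flat model of $k$ and splicing. The converses $(iv)\Rightarrow(ii)$ and $(iii)\Rightarrow(ii)$ are immediate from the inequality $\textrm{Gfd}_{kG}k\le\textrm{fd}_{kG}A$. The upgrades $(iv)\Rightarrow(iii)$ and $(vi)\Rightarrow(v)$, which replace a Gorenstein flat (resp.\ $k$-flat) module by a $PGF$ (resp.\ $k$-projective) module, will be obtained once (vii) is in hand, by standard arguments exploiting the finite Gorenstein weak global dimension of $kG$ together with the interaction between the classes of Gorenstein flat and $PGF$ modules.

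The heart of the argument is the implication from the existence of a weak characteristic module to $\textrm{sfli}(kG)<\infty$, i.e.\ $(vi)\Rightarrow(vii)$. Given an injective $kG$-module $I$, the plan is to tensor the sequence $0\to k\to A'\to\bar{A'}\to 0$ over $k$ with $I$: $k$-purity preserves exactness, and since $I$ is $kG$-injective the resulting $kG$-sequence splits, exhibiting $I$ as a $kG$-summand of $A'\otimes_k I$. A standard lemma shows that $A'\otimes_k N$ has $kG$-flat dimension at most $\textrm{fd}_{kG}A'$ whenever $N$ is $k$-flat, so one first reduces to the $k$-flat case by embedding $I$ in a coinduced module $\textrm{Hom}_k(kG,J)$ for $J$ an injective $k$-module, and then invoking $\textrm{sfli}\,k<\infty$ to control the $k$-flat dimension of $J$. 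This is the step I expect to require the most care, since it is where the $k$-module and $kG$-module structures must be traded against one another. The analogous deduction $(v)\Rightarrow(vii)$ proceeds identically with $k$-projective and $k$-split in place of $k$-flat and $k$-pure.

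Finally, for the ``moreover'' statement, once (vii) holds $kG$ has finite Gorenstein weak global dimension, and a standard argument using totally acyclic complexes of projectives together with the boundedness of the flat dimensions of injective $kG$-modules shows that every Gorenstein projective $kG$-module is $PGF$, and hence Gorenstein flat. The inequality $\textrm{Gfd}_{kG}M\le\textrm{Gpd}_{kG}M$ then follows by computing $\textrm{Gfd}$ along a Gorenstein projective resolution of $M$.
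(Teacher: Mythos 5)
Your cycle of implications does not close. Every arrow you draw touching the cluster $\{(v),(vi)\}$ either stays inside it ($(v)\Rightarrow(vi)$, and $(vi)\Rightarrow(v)$ once $(vii)$ is available) or points out of it ($(vi)\Rightarrow(vii)$, $(v)\Rightarrow(vii)$); you never prove that any of $(i)$--$(iv)$ or $(vii)$ implies $(v)$ or $(vi)$, so the equivalence with those two conditions is not established. The paper closes the loop in the opposite direction to your proposed ``upgrades'': it proves $(iii)\Rightarrow(v)$ and $(iv)\Rightarrow(vi)$, not $(iv)\Rightarrow(iii)$ and $(vi)\Rightarrow(v)$. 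Concretely, $(iii)\Rightarrow(v)$ uses Lemma \ref{lemm29} (when $\textrm{sfli}\,k<\infty$, a PGF $kG$-module restricts to a PGF $k$-module), closure of ${\tt PGF}(k)$ under extensions to conclude $A$ is PGF over $k$, and Lemma \ref{lem41} (a PGF module of finite flat dimension is projective), with the $k$-splitting coming from $\mathrm{Ext}^1_k(\overline{A},k)=0$; and $(iv)\Rightarrow(vi)$ is the easier analogue. With these two arrows the chain $(ii)\Rightarrow(iii)\Rightarrow(v)\Rightarrow(vi)\Rightarrow(ii)$ (using $(vi)\Rightarrow(ii)$, i.e.\ Corollary \ref{cor1}) together with $(ii)\Rightarrow(vii)\Rightarrow(i)\Rightarrow(ii)$ and $(iii)\Rightarrow(iv)\Rightarrow(vi)$ closes everything, and your deferred ``upgrade'' implications are no longer needed.

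There is also a problem in your sketch of $(vi)\Rightarrow(vii)$. Split-embedding $I$ into a coinduced module $\mathrm{Hom}_k(kG,J)$ does not reduce to the $k$-flat case: as a $k$-module, $\mathrm{Hom}_k(kG,J)$ is a direct product of copies of $J$, and over a commutative ring that is not coherent a product of modules of bounded flat dimension need not even have finite flat dimension; so the lemma $\mathrm{fd}_{kG}(A'\otimes_k N)\le\mathrm{fd}_{kG}A'$ for $k$-flat $N$ is not applicable. The working argument (Proposition \ref{propdion}) instead notes that the $k$-restriction of an injective $kG$-module is $k$-injective (standard adjunction, since $kG$ is $k$-free), whence $\mathrm{fd}_k I\le\mathrm{sfli}\,k=m$; one then takes a $kG$-flat resolution of $I$, truncates at length $m$, observes that the syzygy and terms are $k$-flat, tensors the truncated resolution with $A'$ to get $\mathrm{fd}_{kG}(A'\otimes_k I)\le\mathrm{fd}_{kG}A'+m$, and finishes with the split $kG$-monomorphism $I\hookrightarrow A'\otimes_k I$. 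You should replace the coinduction step with this resolution argument.
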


Holm’s metatheorem \cite{Ho} claims that every result in classical homological algebra has a counterpart in Gorenstein homological algebra. We consider a commutative ring $k$ such that $\textrm{sfli}k<\infty$ and a group $G$ which admits a weak characteristic module. Then, Theorem 1.2 implies that every Gorenstein projective $kG$-module is Gorenstein flat. Moreover, under the above conditions we show that a $kG$-module is Gorenstein flat if and only if its Pontryagin dual module is Gorenstein injective (see Proposition \ref{prop45}). Since groups of type $\Phi_k$ and $\textsc{LH}\mathfrak{F}$ groups of type FP$_{\infty}$ admit weak characteristic modules, we obtain the following result (see Theorem \ref{theo54}). 

%The projectively coresolved Gorenstein flat modules were introduced by Saroch and Stovicek \cite{S-S}. Over a ring $R$, these modules are the syzygies of the acyclic complexes of projective modules that remain acyclic after applying the functor $I\otimes_R  \_\!\_$ for every injective module $I$. It is clear that every projectively coresolved Gorenstein flat module is Gorenstein flat. Moreover, every projectively coresolved Gorenstein flat module is Gorenstein projective (see \cite[Theorem 4.4]{S-S}).

\begin{Theorem}Let $k$ be a commutative ring such that $\textrm{sfli}k<\infty$ and $G$ be a group of type $\Phi_k$ or an $\textsc{LH}\mathfrak{F}$ group of type FP$_{\infty}$. Then:
	\begin{itemize}
		\item[(i)]A $kG$-module is Gorenstein projective if and only if it is projectively coresolved Gorenstein flat.
		\item[(ii)]A $kG$-module is Gorenstein flat if and only if its Pontryagin dual module is Gorenstein injective.
	\end{itemize}
\end{Theorem}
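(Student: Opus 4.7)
The first observation is that both classes of groups in the hypothesis admit a weak characteristic module (as noted in the introduction preceding the statement), so the assumptions of Theorem~\ref{theo310} are satisfied. Hence, we may invoke Theorem~\ref{theo310}(vii) to conclude that $\textrm{sfli}(kG) < \infty$, and the ``moreover'' clause of that theorem tells us that every Gorenstein projective $kG$-module is Gorenstein flat. These two facts are the engine that drives both parts.

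For part (i), the implication from projectively coresolved Gorenstein flat (PGF) to Gorenstein projective is formal, since a PGF module sits inside a totally acyclic complex of projectives that in particular remains exact after applying $\textrm{Hom}_{kG}(-,Q)$ for every projective $Q$. For the converse, let $M$ be a Gorenstein projective $kG$-module and let $P^{\bullet}$ be a totally acyclic complex of projective $kG$-modules such that $M$ appears as one of its syzygies. The task is to show that $P^{\bullet} \otimes_{kG} I$ is acyclic for every injective $kG$-module $I$. Since $\textrm{sfli}(kG) < \infty$, such an $I$ admits a finite resolution $0 \to F_n \to \cdots \to F_0 \to I \to 0$ by flat $kG$-modules. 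Tensoring $P^{\bullet}$ with any single flat module preserves exactness, so each $P^{\bullet} \otimes_{kG} F_j$ is acyclic; a standard spectral-sequence or mapping-cone argument applied to the total complex of the double complex $P^{\bullet} \otimes_{kG} F_{\bullet}$ then transfers this acyclicity to $P^{\bullet} \otimes_{kG} I$, proving that $M$ is PGF.

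For part (ii), this is precisely the content of Proposition~\ref{prop45}, whose hypothesis is the existence of a weak characteristic module for $G$ together with $\textrm{sfli}\,k < \infty$. Under our assumptions both are in place, so the result is obtained by direct citation.

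The main obstacle I anticipate is the finite-flat-dimension reduction in the PGF direction of (i): one must make sure that the double-complex / mapping-cone argument producing acyclicity of $P^{\bullet} \otimes_{kG} I$ is organised so that exactness is propagated through each step of the flat resolution of $I$ without losing control of the boundary terms. Once $\textrm{sfli}(kG) < \infty$ is in hand from Theorem~\ref{theo310}, however, the argument is structurally forced, and no further input about the specific classes of groups $\Phi_k$ or $\textsc{LH}\mathfrak{F}$ is needed beyond the existence of a weak characteristic module.
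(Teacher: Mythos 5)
Your overall strategy --- establish that the groups in question admit a weak characteristic module, invoke Theorem~\ref{theo310} to obtain $\textrm{sfli}(kG)<\infty$, and deduce both claims --- is the same as the paper's, which simply cites Lemma~\ref{lem53}, Theorem~\ref{theo310} and Proposition~\ref{prop45}. Part (ii) and the ``Gorenstein projective $\Rightarrow$ PGF'' half of (i) are handled correctly, and your dimension-shifting argument for the latter is essentially the same reduction the paper uses inside the proof of Theorem~\ref{theo310}.

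There is, however, a genuine gap in the other half of (i). You write that the implication PGF $\Rightarrow$ Gorenstein projective is ``formal, since a PGF module sits inside a totally acyclic complex of projectives that in particular remains exact after applying $\textrm{Hom}_{kG}(-,Q)$ for every projective~$Q$.'' This is not what the definition delivers. By definition a PGF module is a syzygy of an acyclic complex $\textbf{P}$ of projectives for which $I\otimes_{kG}\textbf{P}$ is acyclic for every \emph{injective} $I$; there is no a priori reason that $\textrm{Hom}_{kG}(\textbf{P},Q)$ is acyclic for projective $Q$, which is the unrelated acyclicity condition defining Gorenstein projectivity. The inclusion ${\tt PGF}(R)\subseteq {\tt GProj}(R)$ is a nontrivial theorem (this is \cite[Theorem 4.4]{S-S}), proved by showing that modules of finite flat dimension --- in particular projectives --- lie in ${\tt PGF}^{\perp}(R)$, so that $\textrm{Ext}^1_{kG}$ from any PGF syzygy into a projective vanishes. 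You have conflated two distinct acyclicity conditions on complexes of projectives; that a syzygy satisfying the tensor-acyclicity one automatically satisfies the Hom-acyclicity one is exactly the content of that theorem, not a restatement of the definition. Replace the ``formal'' sentence by a citation of \cite[Theorem 4.4]{S-S} (or of the standing inclusion ${\tt PGF}(R)\subseteq{\tt GProj}(R)$ recorded in the paper's Preliminaries) and the proof is complete.
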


While for every commutative ring $k$ and every group $G$ the Gorenstein cohomological dimension ${\textrm{Gcd}}_kG$ vanishes if and only if $G$ is a finite group (see \cite[Corollary 2.3]{ET}), this is not true for the Gorenstein homological dimension (see Corollary \ref{theo62}).

\begin{Theorem}The following conditions on a group $G$ are equivalent:
	\begin{itemize}
		\item[(i)] $G$ is locally finite.
		\item[(ii)]$\textrm{Ghd}_k G=0$ for every commutative ring $k$.
		\item[(iii)]$\textrm{Ghd}_{\mathbb{Z}} G=0$.
	\end{itemize}
\end{Theorem}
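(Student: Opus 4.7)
The plan is to establish the cycle (i) $\Rightarrow$ (ii) $\Rightarrow$ (iii) $\Rightarrow$ (i); the middle implication is trivial by specializing $k=\mathbb{Z}$.

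For (i) $\Rightarrow$ (ii), I would write $G$ as the directed union of its finite subgroups $F$. For each finite $F$, the trivial module $k$ is Gorenstein projective over $kF$ via its Tate (complete projective) resolution $P_\bullet^F$, which is in fact a PGF-resolution because $kF$ is Frobenius over $k$. Since $kG$ is $kF$-free, induction yields an acyclic complex of projective $kG$-modules $Q_\bullet^F := kG \otimes_{kF} P_\bullet^F$ with $k[G/F] = kG \otimes_{kF} k$ as a cycle module. An injective right $kG$-module $I$ restricts to an injective $kF$-module (again by freeness of $kG$ over $kF$), and the change-of-rings identity $I \otimes_{kG} Q_\bullet^F \cong I \otimes_{kF} P_\bullet^F$ together with the PGF-acyclicity of $P_\bullet^F$ show that $I \otimes_{kG} Q_\bullet^F$ is acyclic; thus each $k[G/F]$ is PGF (in particular Gorenstein projective) over $kG$. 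The natural surjections $k[G/F] \to k[G/F']$ for $F \subseteq F'$ realize $k = \varinjlim_F k[G/F]$, and then $\varinjlim_F Q_\bullet^F$ is an acyclic complex of flat $kG$-modules whose tensor product with any injective $kG$-module is acyclic, since filtered colimits commute with tensor products and homology. Hence $k$ is Gorenstein flat, giving $\textrm{Ghd}_k G = 0$.

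For (iii) $\Rightarrow$ (i), suppose $\textrm{Ghd}_\mathbb{Z} G = 0$ and let $H \leq G$ be a finitely generated subgroup. Subgroup descent for Gorenstein flat dimension, obtained by restricting a complete flat resolution of $\mathbb{Z}$ along the $\mathbb{Z} H$-free extension $\mathbb{Z} H \hookrightarrow \mathbb{Z} G$, yields $\textrm{Ghd}_\mathbb{Z} H \leq \textrm{Ghd}_\mathbb{Z} G = 0$. Since $H$ is finitely generated, $\mathbb{Z}$ is finitely presented as a $\mathbb{Z} H$-module; combined with $\textrm{sfli}(\mathbb{Z} H) < \infty$ (which follows from Theorem 1.2(vii) applied to $H$), Saroch--Stovicek PGF-theory promotes the finitely presented Gorenstein flat module $\mathbb{Z}$ to a PGF-module, and hence to a Gorenstein projective $\mathbb{Z} H$-module. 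Therefore $\textrm{Gcd}_\mathbb{Z} H = 0$, and by \cite[Corollary 2.3]{ET}, $H$ is finite. Since every finitely generated subgroup of $G$ is finite, $G$ is locally finite.

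The main obstacle is the upgrade from \emph{finitely presented Gorenstein flat} to \emph{Gorenstein projective} in (iii) $\Rightarrow$ (i), which hinges essentially on $\textrm{sfli}(\mathbb{Z} H) < \infty$ via the PGF-theory. A secondary technical point in (i) $\Rightarrow$ (ii) is establishing that the Tate resolution of $k$ over $kF$ is a PGF-resolution for every commutative $k$ (a consequence of the Frobenius structure of $kF$ over $k$) and that injective $kG$-modules restrict to injective $kF$-modules (by freeness of $kG$ over $kF$); once both are in hand the filtered colimit step is routine.
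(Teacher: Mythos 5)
Your reduction (i)$\Rightarrow$(ii)$\Rightarrow$(iii) is sound; in fact the (i)$\Rightarrow$(ii) direction can be shortened: once each $k[G/F]$ is seen to be PGF (hence Gorenstein flat) over $kG$, you need not assemble a directed system of resolutions -- just invoke closure of the Gorenstein flat class under filtered colimits (\v{S}aroch--\v{S}\v{t}ov\'{\i}\v{c}ek, Corollary~4.12) applied to $k=\varinjlim_F k[G/F]$. The paper gets this direction as a citation to an earlier remark, and your construction is in the same spirit.

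The real problem is the step in (iii)$\Rightarrow$(i) where you assert that a \emph{finitely presented} Gorenstein flat $\mathbb{Z}H$-module is PGF (equivalently, Gorenstein projective) once $\textrm{sfli}(\mathbb{Z}H)<\infty$. This is not a result in \v{S}aroch--\v{S}\v{t}ov\'{\i}\v{c}ek, nor does it follow from the cotorsion-pair machinery in any direct way: the finiteness of $\textrm{sfli}$ gives you that syzygies of acyclic complexes of \emph{projectives} are PGF, and that every Gorenstein projective module is Gorenstein flat, but it does not give the converse passage from Gorenstein flat to Gorenstein projective even for finitely presented modules. Indeed, notice that your claim specialized to the trivial module over a finitely generated group $H$ is \emph{exactly equivalent} to the assertion that $\textrm{Ghd}_{\mathbb Z}H=0$ forces $H$ to be finite, which is precisely the hard content of Emmanouil's theorem that the paper invokes. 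You have not eliminated the difficulty, only relocated it to an unproved lemma dressed as a citation. The paper's actual route is different and avoids this entirely: it uses Proposition~\ref{prop316} (valid since $\textrm{sfli}\,\mathbb Z<\infty$) to convert $\textrm{Ghd}_{\mathbb Z}G=0$ into $\underline{\textrm{hd}}(G)=0$, and then applies Emmanouil's homological characterization of locally finite groups \cite[Theorem~1]{Em2}, which is a theorem about vanishing of $\textrm{Tor}^{\mathbb{Z}G}_*(I,M)$ proved by quite different methods. You should either cite \cite{Em2} directly for the implication (iii)$\Rightarrow$(i), or supply a genuine proof that finitely presented Gorenstein flat implies Gorenstein projective in your setting -- which I do not believe you can do without essentially redoing \cite{Em2}.
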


Furthermore, we examine the relation between the generalized homological and the generalized cohomological dimension of groups defined by Ikenaga \cite{In} (see Corollary \ref{prop16}).

\begin{Theorem}We have $\underline{\textrm{hd}}(G)\leq \underline{\textrm{cd}}(G)$ for every group $G$.
\end{Theorem}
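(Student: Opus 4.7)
The plan is to obtain the inequality as a corollary of Theorem~\ref{prop312}, applied to the ring $k=\mathbb{Z}$. Since $\textrm{sfli}\,\mathbb{Z}=1<\infty$, that theorem already gives
\[
\textrm{Ghd}_{\mathbb{Z}}\,G \leq \textrm{Gcd}_{\mathbb{Z}}\,G.
\]
It therefore remains to identify Ikenaga's invariants with the Gorenstein (co)homological dimensions of the trivial $\mathbb{Z}G$-module $\mathbb{Z}$; that is, to establish $\underline{\textrm{cd}}(G)=\textrm{Gcd}_{\mathbb{Z}}\,G$ and $\underline{\textrm{hd}}(G)=\textrm{Ghd}_{\mathbb{Z}}\,G$.

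For the cohomological side, the equality $\underline{\textrm{cd}}(G)=\textrm{Gcd}_{\mathbb{Z}}\,G$ is already known from work of Bahlekeh-Dembegioti-Talelli: one direction comes from taking a Gorenstein projective resolution of $\mathbb{Z}$ over $\mathbb{Z}G$ of length $\textrm{Gcd}_{\mathbb{Z}}\,G$ and dimension shifting against $\mathbb{Z}G$-modules of finite $\mathbb{Z}$-projective dimension, while the reverse inequality is obtained by specializing Ikenaga's supremum at $M=\mathbb{Z}$ (which is $\mathbb{Z}$-projective) and $F=\mathbb{Z}G$.

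I would then establish the homological analogue $\underline{\textrm{hd}}(G)=\textrm{Ghd}_{\mathbb{Z}}\,G$ by a formally parallel argument. A Gorenstein flat resolution of $\mathbb{Z}$ over $\mathbb{Z}G$ of length $\textrm{Ghd}_{\mathbb{Z}}\,G$ computes the Tor groups appearing in Ikenaga's definition, and the required Tor-acyclicity of Gorenstein flats against $\mathbb{Z}G$-modules of finite $\mathbb{Z}$-flat dimension follows from the finiteness of $\textrm{sfli}\,\mathbb{Z}$; via Pontryagin duality (Proposition~\ref{prop45}), this vanishing can be traded for the Ext-vanishing of Gorenstein injectives against modules of finite $\mathbb{Z}$-injective dimension, which is standard. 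The opposite inequality is obtained, as in the cohomological case, by specializing Ikenaga's supremum at $M=\mathbb{Z}$ and $F=\mathbb{Z}G$. Chaining the three ingredients,
\[
\underline{\textrm{hd}}(G)=\textrm{Ghd}_{\mathbb{Z}}\,G \leq \textrm{Gcd}_{\mathbb{Z}}\,G=\underline{\textrm{cd}}(G),
\]
which is the desired inequality.

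The principal obstacle is the flat identification $\underline{\textrm{hd}}(G)=\textrm{Ghd}_{\mathbb{Z}}\,G$: unlike Gorenstein projectives, Gorenstein flat modules are a priori less rigid, so controlling their behavior against the test modules of finite $\mathbb{Z}$-flat dimension appearing in Ikenaga's supremum relies essentially on $\textrm{sfli}\,\mathbb{Z}<\infty$ and on passing through Pontryagin duality as above. Once this is in place, the corollary is immediate from Theorem~\ref{prop312}.
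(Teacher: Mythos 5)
Your overall strategy is the same as the paper's: translate Ikenaga's invariants into $\textrm{Gcd}_{\mathbb{Z}}G$ and $\textrm{Ghd}_{\mathbb{Z}}G$ and invoke Theorem~\ref{prop312}. The cohomological identification $\underline{\textrm{cd}}(G)=\textrm{Gcd}_{\mathbb{Z}}G$ is indeed available (it is Proposition~\ref{prop68} for $k=\mathbb{Z}$). However, there is a genuine gap in the homological side of your argument.

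You assert the identity $\underline{\textrm{hd}}(G)=\textrm{Ghd}_{\mathbb{Z}}G$ unconditionally and then chain the inequalities. But Proposition~\ref{prop316} only gives this identity \emph{under the hypothesis} that $\textrm{Ghd}_{\mathbb{Z}}G<\infty$. This hypothesis is not a technicality: the Tor-characterization $\textrm{Gfd}_{kG}M=\sup\{i:\textrm{Tor}_i^{kG}(I,M)\neq 0,\,I\text{ injective}\}$ (via \cite[Theorem 2.8]{Bennis}) is only valid once one already knows that $\textrm{Gfd}_{kG}M$ is finite. Without it, $\underline{\textrm{hd}}(G)$ could a priori be finite while $\textrm{Ghd}_{\mathbb{Z}}G=\infty$, in which case the "equality" you write breaks down and the chain $\underline{\textrm{hd}}(G)=\textrm{Ghd}_{\mathbb{Z}}G\leq\textrm{Gcd}_{\mathbb{Z}}G$ is unjustified. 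The order of your logic needs to be reversed: one should first note that if $\underline{\textrm{cd}}(G)=\textrm{Gcd}_{\mathbb{Z}}G=\infty$ the inequality is vacuous, so one may assume $\textrm{Gcd}_{\mathbb{Z}}G<\infty$; Theorem~\ref{prop312} then delivers $\textrm{Ghd}_{\mathbb{Z}}G<\infty$, and \emph{only now} can Proposition~\ref{prop316} be invoked to get $\underline{\textrm{hd}}(G)=\textrm{Ghd}_{\mathbb{Z}}G$. This is precisely how the paper organizes the proof of Theorem~\ref{propp16}, from which the statement is deduced as a special case at $k=\mathbb{Z}$.

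A secondary point: your proposed route to the identity via Pontryagin duality and Gorenstein injective Ext-vanishing is more roundabout than what the paper actually uses. Proposition~\ref{prop316} goes directly through Proposition~\ref{propara}: a weak characteristic module $A$ with $\textrm{fd}_{kG}A=\textrm{Ghd}_kG$ bounds $\textrm{Gfd}_{kG}M$ for every $k$-flat $kG$-module $M$ via the tensor-power construction, and the Tor-characterization converts this into the inequality $\underline{\textrm{hd}}_k(G)\leq\textrm{Ghd}_kG$. You do not need Proposition~\ref{prop45} here.
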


Our next main result yields an analogue of Serre’s Theorem for the Gorenstein homological dimension of groups (see Theorem \ref{theo612}).

\begin{Theorem}Let $k$ be a commutative ring such that $\textrm{sfli}k<\infty$. Consider a group $G$ and a subgroup $H$ of $G$ of finite index. Then, $\textrm{Ghd}_{k}G=\textrm{Ghd}_{k}H$.
\end{Theorem}

This paper also provides approximations for the invariant $\textrm{sfli}(kG)$ and the Gorenstein weak global dimension of $kG$ (see Corollary \ref{cor57} and Corollary \ref{cor58}).

\begin{Theorem}Let $k$ be a commutative ring and $G$ be a group. Then,
	\begin{itemize}
		\item[(i)] $\textrm{max}\{\textrm{Ghd}_{k}G,\textrm{sfli}k\}\leq \textrm{sfli}(kG)\leq \textrm{Ghd}_{k}G+\textrm{sfli}k,$
		\item[(ii)] $\textrm{max}\{\textrm{Ghd}_{k}G,\textrm{Gwgl.dim}k\}\leq \textrm{Gwgl.dim}(kG)\leq \textrm{Ghd}_{k}G+\textrm{Gwgl.dim}k.$
	\end{itemize} 
\end{Theorem}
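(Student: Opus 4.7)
Both assertions are manifestations of one result: by the Christensen--Estrada--Thompson identification \cite{CET}, $\textrm{sfli}R = \textrm{Gwgl.dim}R$ holds for every ring $R$, so (ii) follows from (i) once the latter is established. The plan is to verify the two inequalities in (i) separately. The estimate $\textrm{Ghd}_kG \leq \textrm{sfli}(kG)$ is immediate from $\textrm{Ghd}_kG = \textrm{Gfd}_{kG}k \leq \textrm{Gwgl.dim}(kG) = \textrm{sfli}(kG)$. For $\textrm{sfli}k \leq \textrm{sfli}(kG)$, I will argue by coinduction: for every injective $k$-module $I$, the module $\textrm{Hom}_k(kG,I)$ is injective over $kG$; since $kG$ is a free $k$-module, any $kG$-flat resolution of it is simultaneously a $k$-flat resolution, and $I$ embeds as a $k$-direct summand via evaluation at $1$, so $\textrm{fd}_k I \leq \textrm{fd}_{kG}\textrm{Hom}_k(kG,I) \leq \textrm{sfli}(kG)$.

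\textbf{Setup for the upper bound.} Suppose $d := \textrm{Ghd}_kG$ and $n := \textrm{sfli}k$ are both finite, otherwise the inequality is vacuous. By Theorem 1.2 there is a short exact sequence of $kG$-modules $0 \to k \to A \to \bar A \to 0$ in which the inclusion is $k$-pure, $A$ is $k$-flat, and $\textrm{fd}_{kG}A \leq d$; consequently $\bar A$ is also $k$-flat (as a $k$-pure quotient of a $k$-flat module) and $\textrm{Gfd}_{kG}\bar A \leq d+1$ by the standard cokernel inequality for Gorenstein flat dimension. For an injective $kG$-module $I$, the module $I$ is $k$-injective (because $kG$ is $k$-flat), so $\textrm{fd}_k I \leq n$. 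Tensoring the $k$-pure sequence over $k$ with $I$ yields a short exact sequence of $kG$-modules with diagonal action
\[
0 \to I \to A\otimes_k I \to \bar A\otimes_k I \to 0.
\]

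\textbf{Closure lemma and conclusion.} The principal obstacle is the following closure lemma that I will prove: if $M$ is $kG$-flat (respectively, $kG$-Gorenstein flat) and $Q$ is $k$-flat, then $M \otimes_k Q$ with $G$ acting on $M$ is $kG$-flat (respectively, $kG$-Gorenstein flat). The flat case follows from the identity $(M \otimes_k Q) \otimes_{kG} L \cong (M \otimes_{kG} L) \otimes_k Q$, which makes $(M\otimes_k Q)\otimes_{kG}-$ a composite of exact functors. The Gorenstein flat case comes from tensoring a totally acyclic complex $F_\bullet$ of $kG$-flats representing $M$ with $Q$ over $k$, and applying the same identity to $F_\bullet \otimes_{kG} J$ for an arbitrary injective $kG$-module $J$ to verify that total acyclicity survives. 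Fixing a $k$-flat resolution $0 \to Q_n \to \cdots \to Q_0 \to I \to 0$, tensoring with $A$ and with $\bar A$ over $k$ is exact by $k$-flatness; combining this with $kG$-flat and Gorenstein flat resolutions of $A$ and $\bar A$ respectively, via the closure lemma, yields $\textrm{fd}_{kG}(A \otimes_k I) \leq d+n$ and $\textrm{Gfd}_{kG}(\bar A \otimes_k I) \leq d+1+n$. The displayed short exact sequence then forces $\textrm{Gfd}_{kG}I \leq \max\{d+n,\,(d+1+n)-1\} = d+n$. Since the hypotheses force $\textrm{sfli}(kG)$ to be finite by Theorem 1.2, $\textrm{fd}_{kG}I$ is finite and therefore coincides with $\textrm{Gfd}_{kG}I$, yielding $\textrm{fd}_{kG}I \leq d+n$; taking the supremum over injective $kG$-modules $I$ gives $\textrm{sfli}(kG) \leq d+n$. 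Part (ii) is then immediate from part (i) by the Christensen--Estrada--Thompson identification.
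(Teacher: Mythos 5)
Your treatment of the lower bounds is fine, and the route to $\textrm{Ghd}_kG \leq \textrm{sfli}(kG)$ via $\textrm{Gfd}_{kG}k \leq \textrm{Gwgl.dim}(kG) = \textrm{sfli}(kG)$ (valid since $kG\cong (kG)^{\textrm{op}}$, so the CET identity applies whenever the right-hand side is finite) is actually shorter than the paper's detour through $\underline{\textrm{hd}}_k(G)$; the coinduction argument for $\textrm{sfli}\, k \leq \textrm{sfli}(kG)$ is also correct and is what the cited Lemma on subgroups amounts to for $H=\{1\}$.

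The upper bound, however, has a genuine gap in the closure lemma. You correctly produce the $kG$-short exact sequence $0 \to I \to A\otimes_k I \to \bar A\otimes_k I \to 0$ \emph{with the diagonal $G$-action} (nothing else makes the left map $kG$-equivariant, since $I$ carries a nontrivial $G$-action). But the lemma you then state and prove concerns $M\otimes_k Q$ with $G$ acting \emph{only on $M$} and $Q$ a plain $k$-module; the identity $(M\otimes_k Q)\otimes_{kG}L \cong (M\otimes_{kG}L)\otimes_k Q$ you invoke only holds in that setting. Tensoring a $k$-flat resolution $Q_\bullet \to I$ of $k$-modules with $A$ in the way your lemma allows produces a resolution of $A\otimes_k I$ with $G$ acting on $A$ only, which is a different $kG$-module from the diagonal one appearing in your exact sequence, so the bound $\textrm{fd}_{kG}(A\otimes_k I) \leq d+n$ does not follow. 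The statement actually needed is the paper's Lemma~\ref{rem1}: for a $k$-flat $kG$-module $N$ and the \emph{diagonal} action, $\textrm{fd}_{kG}(M\otimes_k N)\leq \textrm{fd}_{kG}M$; its proof is not a tensor shuffle but a Govorov--Lazard reduction to $M=kG$ followed by the untwisting isomorphism $kG\otimes_k N_{\mathrm{diag}} \cong kG\otimes_k N_{\mathrm{triv}}$, and in the application the $Q_i$ must be $kG$-modules (a truncated $kG$-flat resolution of $I$, whose $n$-th syzygy is $k$-flat since $I$ is $k$-injective). Separately, the dimension shift via $\bar A$ is avoidable: since $I$ is $kG$-injective, $I \to A\otimes_k I$ is $kG$-split, so $\textrm{fd}_{kG}I\leq \textrm{fd}_{kG}(A\otimes_k I)$ directly, which is how the paper concludes without passing through Gorenstein flat dimensions at all.
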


Finally, subadditivity results for the invariant $\textrm{sfli}(kG)$ and the Gorenstein weak global dimension of $kG$ are obtained (see Proposition \ref{prop65} and Corollary \ref{cor76}).

\begin{Theorem}Let $k$ be a commutative ring and consider a group $G$, a normal subgroup $H$ of $G$ and the corresponding quotient group $Q=G/H$. Then,
\begin{itemize}
	\item[(i)]$\textrm{sfli}(kG) \leq \textrm{sfli}(kH) + \textrm{Ghd}_k Q \leq \textrm{sfli}(kH) + \textrm{sfli}(kQ),$
	\item[(ii)]$\textrm{Gwgl.dim}(kG) \leq \textrm{Gwgl.dim}(kH) + \textrm{Ghd}_k Q\leq \textrm{Gwgl.dim}(kH) + \textrm{Gwgl.dim}(kQ).$
\end{itemize}	
\end{Theorem}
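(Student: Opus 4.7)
The rightmost inequalities in (i) and (ii) are immediate: $\textrm{Ghd}_k Q = \textrm{Gfd}_{kQ} k \leq \textrm{Gwgl.dim}(kQ)$, and by \cite{CET} one has $\textrm{Gwgl.dim}(kQ) = \textrm{sfli}(kQ)$ whenever either side is finite. So I concentrate on the leftmost inequalities, treating (i) and (ii) in parallel by reducing both to the single subadditivity estimate
\[
\textrm{Gfd}_{kG} M \,\leq\, \textrm{Gfd}_{kH} M + \textrm{Ghd}_k Q
\]
for every $kG$-module $M$. Passing to the supremum over $M$ gives (ii). For (i), if $\textrm{sfli}(kH)+\textrm{Ghd}_k Q$ is finite then $\textrm{sfli}(kH)=\textrm{Gwgl.dim}(kH)<\infty$ by \cite{CET}, so (ii) forces $\textrm{Gwgl.dim}(kG)<\infty$, and a second application of \cite{CET} produces $\textrm{sfli}(kG)=\textrm{Gwgl.dim}(kG)$, delivering the bound in (i).

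To establish the subadditivity estimate I would appeal to Theorem~\ref{theo310} for the group $Q$: assuming $\textrm{Ghd}_k Q<\infty$ (otherwise the inequality is vacuous), clause (vi) supplies a $k$-pure $kQ$-monomorphism $k\to A'$ with $\textrm{fd}_{kQ} A'<\infty$ and $A'$ a $k$-flat $kQ$-module; $k$-purity moreover forces $\overline{A'}$ to be $k$-flat. Inflate the exact sequence $0\to k\to A'\to \overline{A'}\to 0$ along $G\twoheadrightarrow Q$, so that $H$ acts trivially on $A'$ and $\overline{A'}$; tensoring over $k$ with $M$ endowed with the diagonal $G$-action, $k$-purity preserves exactness and yields a short exact sequence of $kG$-modules
\[
0 \longrightarrow M \longrightarrow M\otimes_k A' \longrightarrow M\otimes_k \overline{A'} \longrightarrow 0.
\]
Because $A'$ and $\overline{A'}$ are $k$-flat with trivial $H$-action, the restrictions to $kH$ of the outer terms are $k$-flat tensor extensions of $M|_{kH}$; since tensoring with a $k$-flat module preserves both flat $kH$-resolutions and totally acyclic complexes of flat $kH$-modules, one obtains $\textrm{Gfd}_{kH}(M\otimes_k A')\leq \textrm{Gfd}_{kH} M$ and similarly for $\overline{A'}$. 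Using that $kH\hookrightarrow kG$ is a free ring extension (so induction preserves Gorenstein flatness via Frobenius reciprocity), together with $\textrm{fd}_{kQ} A'<\infty$ and an induction on that dimension, one then bounds $\textrm{Gfd}_{kG}(M\otimes_k A')$ and $\textrm{Gfd}_{kG}(M\otimes_k \overline{A'})$ by $\textrm{Gfd}_{kH} M$ plus a contribution controlled by $\textrm{Ghd}_k Q$; the long exact sequence of Gorenstein Tor attached to the displayed sequence then yields the desired estimate.

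The main obstacle is this final step: transferring the finite $kQ$-flat dimension of $A'$ and the Gorenstein behaviour of $\overline{A'}$ over $kQ$ into uniform control of the Gorenstein flat dimensions over $kG$ of the diagonal tensor products, without inflating $\textrm{Gfd}_{kH} M$. Calibrating the interaction of the free extension $kH\hookrightarrow kG$, the $k$-purity guaranteed by Theorem~\ref{theo310}(vi), and the compatibility of induction with the diagonal-action tensor product is where the delicate bookkeeping will sit and is the most likely point of technical difficulty.
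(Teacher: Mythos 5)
Your treatment of the easy pieces is fine: the rightmost inequalities, and the deduction of (i) from (ii) via the symmetry $kG\cong(kG)^{\mathrm{op}}$ and Theorem~\ref{theo012}. Your setup — pull a weak characteristic module $A'$ for $Q$ from Theorem~\ref{theo310}(vi), inflate along $G\twoheadrightarrow Q$, and splice the resulting $k$-pure sequence with $M$ via the diagonal action — is a sensible opening move and is broadly in the spirit of the paper. But the step you flag as the ``main obstacle'' is exactly where the argument must happen, and as written it is not closed; moreover I do not think the route you sketch leads there.

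The concrete gap is the transfer from the pair $(kH,kQ)$ to $kG$. You have a $kQ$-flat resolution of $A'$ of finite length, and you want to produce a $kG$-flat bound. The paper accomplishes this transfer with Lemma~\ref{lemZ}: for a \emph{normal} subgroup $H$, a flat $k[G/H]$-module $M$ tensored over $k$ (diagonal $G$-action) with a $kG$-module that is flat over $kH$ is a flat $kG$-module. Your proposal never invokes this fact nor any replacement for it; instead you appeal to the claim that ``tensoring with a $k$-flat module preserves flat $kH$-resolutions and totally acyclic complexes of flat $kH$-modules.'' That statement stays inside $kH$ and cannot on its own produce information over $kG$. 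Worse, even the $kH$-level statement is delicate: multiplying a totally acyclic complex of flat $kH$-modules by a $k$-flat module need not preserve acyclicity after tensoring with injective $kH$-modules unless you do a separate argument (the paper avoids this entirely by never needing to preserve total acyclicity under such a tensor). Finally, the invocation of a ``long exact sequence of Gorenstein Tor'' is not an available tool here; Gorenstein Tor does not come with the balance and long-exactness you are assuming for arbitrary modules.

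You also chose to prove a strictly stronger intermediate statement, the module-wise subadditivity $\mathrm{Gfd}_{kG}M\leq\mathrm{Gfd}_{kH}M+\mathrm{Ghd}_kQ$, which is not what the paper does and which carries its own difficulties: bounding $\mathrm{Gfd}_{kG}$ of the diagonal tensors $M\otimes_kA'$, $M\otimes_k\overline{A'}$ in terms of $\mathrm{Gfd}_{kH}M$ has no obvious mechanism. The paper's proof is substantially more direct: it bounds $\mathrm{fd}_{kG}I$ for an injective $kG$-module $I$. One takes a $kG$-flat resolution of $I$ truncated at length $n=\mathrm{sfli}(kH)$, observes that its terms and $n$-th syzygy are $kH$-flat, takes a length-$m'=\mathrm{Ghd}_kQ$ flat $kQ$-resolution of $A$, and forms the total complex of the tensor product over $k$; K\"unneth gives a resolution of $A\otimes_kI$ of length $n+m'$, and Lemma~\ref{lemZ} shows every term is $kG$-flat. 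Since $\iota:k\to A$ is $k$-pure and $I$ is injective, $I$ is a $kG$-direct summand of $A\otimes_kI$, hence $\mathrm{fd}_{kG}I\leq n+m'$, giving $\mathrm{sfli}(kG)\leq\mathrm{sfli}(kH)+\mathrm{Ghd}_kQ$. To repair your argument you would at minimum need to import Lemma~\ref{lemZ} (or prove a Gorenstein-flat analogue of it for diagonal tensors) and restructure the final step away from a hypothetical long exact sequence of Gorenstein Tor. As it stands the proposal does not close the key step, and I would not call the approach essentially the same as the paper's.
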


 The contents of the paper are as follows. Section 2 establishes notation, terminology and preliminary results that will be used in the sequel. Section 3 introduces the notion of weak characteristic module for a group $G$ over a commutative ring $k$ and provides finiteness criteria for the Gorenstein homological dimension of $G$ over $k$, when $k$ has finite Gorenstein weak global dimension (see Theorem \ref{theo310}). Using the notion of weak characteristic module, the relationship between the Gorenstein homological dimension and the Gorenstein cohomological dimension of a group $G$ over a commutative ring $k$ of finite Gorenstein weak global dimension is studied (see Theorem \ref{prop312}). The existence of a weak characteristic module over a commutative ring of finite Gorenstein weak global dimension provides Gorenstein analogues of well known properties of modules in classical homological algebra. Under the above conditions, it is proven that (a) every Gorenstein projective $kG$-module is Gorenstein flat and (b) a $kG$-module is Gorenstein flat if and only if its Pontryagin dual is Gorenstein injective. Applications are given in Section 4, where groups of type $\Phi_k$ and $\textsc{LH}\mathfrak{F}$-groups of type FP$_{\infty}$ are considered (see Theorem \ref{theo54}).
 
 Following Ikenaga \cite{In}, Section 5 defines the generalized homological dimension of groups over any commutative ring $k$ and studies the relationship between this invariant and the Gorenstein homological dimension of groups over any commutative ring of finite Gorenstein weak global dimension (see Proposition \ref{prop316}). It is deduced that for every commutative ring $k$ and every group $G$, the Gorenstein homological dimension ${\textrm{Ghd}}_kG$ vanishes if and only if $G$ is a locally finite group (see Theorem \ref{theo62}). Moreover, it is proven that the generalized cohomological dimension bounds the generalized homological dimension over any commutative $\aleph_0$-Noetherian ring of finite global dimension (see Theorem \ref{propp16}). Finally, a Gorenstein homological analogue of Serre's theorem \cite[VIII, Theorem 3.1]{Br} is obtained (see Theorem \ref{theo612}).
 
 Section 6 provides estimates for the invariant $\textrm{sfli}(kG)$ in terms of the dimension $\textrm{Ghd}_{k}G$ and the invariant $\textrm{sfli}k$ (see Corollary \ref{cor57}). It is also proven there that the invariant $\textrm{sfli}(kG)$ is subadditive under group extensions over any commutative ring (see Proposition \ref{prop65}). As a result, estimates for the Gorenstein weak global dimension of $kG$ are obtained. 

\smallskip

\noindent\textbf{Conventions.} All rings are assumed to be associative and unital and all ring homomorphisms will be unit preserving. Unless otherwise specified, all modules will be left $R$-modules. We denote by $\textrm{Mod}(R)$ the category of $R$-modules.

\section{Preliminaries}In this  section, we establish notation, terminology and preliminary results that will be used in the sequel.

\subsection{Gorenstein modules and dimensions}
An acyclic complex $\textbf{P}$ of projective modules is said to be a complete 
projective resolution if the complex of abelian groups $\mbox{Hom}_R(\textbf{P},Q)$
is acyclic for every projective module $Q$. Then, a module is Gorenstein 
projective if it is a syzygy of a complete projective resolution. We denote by ${\tt GProj}(R)$ the class of Gorenstein projective $R$-modules. The 
Gorenstein projective dimension $\mbox{Gpd}_RM$ of a module $M$ is the 
length of a shortest resolution of $M$ by Gorenstein projective modules. 
If no such resolution of finite length exists, then we write 
$\mbox{Gpd}_RM = \infty$. If $M$ is a module of finite projective dimension, then $M$ has finite Gorenstein projective dimension as well and $\mbox{Gpd}_RM = \mbox{pd}_RM$ (see \cite{H1}).

An acyclic complex $\textbf{I}$ of injective modules is said to be a complete 
injective resolution if the complex of abelian groups $\mbox{Hom}_R(J,\textbf{I})$
is acyclic for every injective module $J$. Then, a module is Gorenstein 
injective if it is a syzygy of a complete injective resolution. We denote by ${\tt GInj}(R)$ the class of Gorenstein injective $R$-modules. The 
Gorenstein injective dimension $\mbox{Gid}_RM$ of a module $M$ is the 
length of a shortest (right) resolution of $M$ by Gorenstein injective modules. 
If no such resolution of finite length exists, then we write 
$\mbox{Gid}_RM = \infty$. If $M$ is a module of finite injective dimension, then $M$ has finite Gorenstein injective dimension as well and $\mbox{Gid}_RM = \mbox{id}_RM$ (see \cite{H1}).

An acyclic complex $\textbf{F}$ of flat modules is said to be a complete flat
resolution if the complex of abelian groups $I \otimes_R \textbf{F}$ is acyclic 
for every injective right module $I$. Then, a module is Gorenstein 
flat if it is a syzygy of a complete flat resolution. We denote by ${\tt GFlat}(R)$ the class of Gorenstein flat $R$-modules. The Gorenstein flat dimension 
$\mbox{Gfd}_RM$ of a module $M$ is the length of a shortest resolution 
of $M$ by Gorenstein flat modules. If no such resolution of finite length
exists, then we write $\mbox{Gfd}_RM = \infty$. If $M$ is a module of 
finite flat dimension, then $M$ has finite Gorenstein flat dimension as
well and $\mbox{Gfd}_RM = \mbox{fd}_RM$ (see \cite[Theorem 2.2]{Ben} or \cite[Lemma 2.4]{KS}). 

The notion of a projectively coresolved Gorenstein flat module (PGF-module, for short) is introduced by Saroch and Stovicek \cite{S-S}. A PGF module is a syzygy of an acyclic complex of projective modules $\textbf{P}$, which is such that the complex of abelian groups $I \otimes_R \textbf{P}$ is acyclic for every injective module $I$. We denote by ${\tt PGF}(R)$ the class of PGF $R$-modules. It is clear that the class ${\tt PGF}(R)$ is contained in ${\tt GFlat}(R)$. Moreover, we have the inclusion ${\tt PGF}(R) \subseteq {\tt GProj}(R)$ (see \cite[Theorem 4.4]{S-S}). Finally, the class of PGF $R$-modules is closed under extensions, direct sums, direct summands and kernels of epimorphisms. 

The notion of a Ding projective module is introduced by Ding, Li and Mao \cite{DLM}, where they were called strongly Gorenstein flat modules. Later, Gillespie \cite{Gi} renamed these modules Ding projective modules. A Ding projective module is a syzygy of an acyclic complex of projective modules $\textbf{P}$ which is such that the complex of abelian groups $\textrm{Hom}_R(\textbf{P},F)$ is acyclic for every flat module $F$. We denote by ${\tt DP}(R)$ the class of Ding projective $R$-modules. It is clear that the class ${\tt DP}(R)$ is contained in ${\tt GProj}(R)$. Moreover, it follows easily from \cite[Theorem 4.11]{S-S} that ${\tt PGF}(R)\subseteq {\tt DP}(R)$. Hence, we have the inclusions ${\tt PGF}(R)\subseteq {\tt DP}(R)\subseteq {\tt GProj}(R)$. 

\medskip\noindent \textbf{Cotorsion pairs.} Let $\mathcal{L}$ be a class of $R$-modules. We define its left orthogonal class as $^\perp\mathcal{L}=\{X\in R\text{-}\textrm{Mod}\mid \textrm{Ext}^1(X,L)=0\text{, }  \forall L\in\mathcal{L}\}$. The right orthogonal class of $\mathcal{L}$ is defined dually. An ordered pair of classes $(\mathcal{K},\mathcal{L})$ is called a cotorsion pair if $\mathcal{K}^\perp=\mathcal{L}$ and $K=$ $^\perp\mathcal{L}$. A cotorsion pair $(\mathcal{K},\mathcal{L})$ is called complete if for every $R$-module $M$ there are approximation sequences of the form \[ 0 \rightarrow L \rightarrow K \rightarrow M \rightarrow 0 \,\,\, \,\textrm{and}\,\,\,\, 0 \rightarrow M\rightarrow L' \rightarrow K' \rightarrow 0,\] where $K,K'\in\mathcal{K}$ and $L,L'\in\mathcal{L}$. Finally, the cotorsion pair $(\mathcal{K},\mathcal{L})$ is called hereditary if $ \textrm{Ext}^{i>0}(K,L)=0$ for every $L\in\mathcal{L}$ and every $K\in\mathcal{K}$. In that case, the class $\mathcal{K}$ is closed under kernels of epimorphisms, while the class $\mathcal{L}$ is closed under cokernels of monomorphisms.
%\begin{Lemma}\label{lemataki}{\rm(\cite[Lemma 2.4]{KS})} Let $M$ be a Gorenstein flat $R$-module of finite flat dimension. Then $M$ is flat.\end{Lemma}

%\begin{Corollary}\label{corolaki}{\rm(\cite[Corollary 2.5]{KS})} Let $M$ be an $R$-module of finite flat dimension. Then, $\textrm{Gfd}_R M =\textrm{fd}_R M$.\end{Corollary}

\begin{Lemma} \label{lem41}Let $M$ be a PGF $R$-module such that $\textrm{fd}_R M <\infty$. Then, $M$ is $R$-projective.
\end{Lemma}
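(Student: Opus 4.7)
The plan is to upgrade the hypothesis $\mathrm{fd}_R M<\infty$ to the much stronger statement that $M$ is actually flat, and then to split a free presentation of $M$ by exploiting that $M$ is Ding projective.

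For the first step, I would combine two items already recorded in the preliminaries. Since $M$ is PGF we have $M\in\mathrm{GFlat}(R)$, so $\mathrm{Gfd}_R M=0$; and the identification $\mathrm{Gfd}_R M=\mathrm{fd}_R M$ valid whenever $\mathrm{fd}_R M<\infty$ then forces $\mathrm{fd}_R M=0$. Hence $M$ is flat.

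For the second step, I would fix a short exact sequence $0\to K\to Q\to M\to 0$ with $Q$ a free $R$-module. Because $Q$ and $M$ are both flat, $K$ is flat as well. Now I would invoke the inclusion $\mathrm{PGF}(R)\subseteq\mathrm{DP}(R)$: $M$ is Ding projective, so by the very definition of Ding projectivity the complete projective resolution of $M$ remains exact after applying $\mathrm{Hom}_R(-,F)$ for every flat $F$, giving $\mathrm{Ext}^i_R(M,F)=0$ for every flat $F$ and every $i\geq 1$. Specialising to the flat module $K$ yields $\mathrm{Ext}^1_R(M,K)=0$, so the displayed short exact sequence splits, $M$ is a direct summand of the free module $Q$, and in particular $M$ is projective.

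I do not foresee any serious obstacle, but the one small subtlety worth flagging is that the PGF hypothesis has to be used twice and through two different inclusions: through $\mathrm{GFlat}(R)$ to upgrade finite flat dimension to flatness, and through $\mathrm{DP}(R)$ to obtain the $\mathrm{Ext}^1$-vanishing against a flat module. Gorenstein projectivity alone would only give vanishing against \emph{projective} modules, which is not enough here since the kernel $K$ is merely flat; this is precisely what forces the statement to be formulated for PGF modules rather than for arbitrary Gorenstein projective modules of finite flat dimension.
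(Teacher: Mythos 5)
Your argument is correct, and it takes a genuinely different (and somewhat shorter) route than the paper's proof. The paper first handles the case $\mathrm{fd}_R M=0$ by embedding $M$ into a projective $P$ via a PGF complete resolution, $0\to M\to P\to N\to 0$, and splitting this on the left using $\mathrm{Ext}^1_R(N,M)=0$, which holds because $N$ is PGF and flat modules lie in ${\tt PGF}^{\perp}(R)$ by \cite[Theorem 4.11]{S-S}; for $\mathrm{fd}_R M=n>0$ it then runs an induction on a truncated projective resolution to show the $n$-th syzygy is a flat PGF module, concluding $\mathrm{pd}_RM<\infty$ and then invoking Gorenstein projectivity. You bypass the induction entirely: the inclusion ${\tt PGF}(R)\subseteq{\tt GFlat}(R)$ gives $\mathrm{Gfd}_RM=0$, and the identity $\mathrm{Gfd}_RM=\mathrm{fd}_RM$ for modules of finite flat dimension immediately forces $M$ to be flat. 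You then split a free presentation $0\to K\to Q\to M\to 0$ on the right, using the Ding-projective side of the PGF hypothesis (via ${\tt PGF}(R)\subseteq{\tt DP}(R)$) to get $\mathrm{Ext}^1_R(M,K)=0$ with $K$ flat. So both proofs exploit an orthogonality between PGF-type modules and flat modules, but from opposite sides of the cotorsion theory: the paper puts the flat module in ${\tt PGF}^{\perp}$ as the target, whereas you use the Ding-projective vanishing of $\mathrm{Ext}$ against flat targets. Your version trades the paper's induction for the prepackaged equality $\mathrm{Gfd}=\mathrm{fd}$ on finite-flat-dimension modules, which makes the reduction to the flat case one line, and your closing remark about why PGF (rather than plain Gorenstein projective) is the right hypothesis is exactly on point.
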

\begin{proof}
	Let $\textrm{fd}_R M=n$. If $n=0$, then the PGF $R$-module is also flat. Since $M$ is PGF, there exists a short exact sequence of $R$-modules of the form $0\rightarrow M \rightarrow P \rightarrow N \rightarrow 0$, where the $R$-module $P$ is projective and the $R$-module $N$ is PGF as well. Since $M$ is $R$-flat, invoking \cite[Theorem 4.11]{S-S} we obtain that $\textrm{Ext}^1_{R}(N,M)=0$ and hence the exact sequence above is $R$-split. Thus, the $R$-module $M$ is projective as a direct summand of $P$. In the case where $n>0$, we consider a truncated projective resolution of $M$ of length $n$ $$0\rightarrow K_n \rightarrow P_{n-1} \rightarrow \cdots \rightarrow P_1 \rightarrow P_0 \rightarrow M \rightarrow 0.$$ Since $\textrm{fd}_R M =n$, the $R$-module $K_n$ is flat. Then, \cite[Proposition 2.1(iii)]{KS} and a simple inductive argument yield that the $R$-module $K_n$ is also PGF. Hence, the case $n=0$ implies that $K_n$ is projective and $\textrm{pd}_R M \leq n$. Since the $R$-module $M$ is $PGF$ and hence Gorenstein projective, we conclude that $M$ is projective.
\end{proof}

\begin{Lemma}\label{lemX}Let $\overline{{\tt GFlat}}_n(R)$ be the subcategory of $R$-modules having Gorenstein flat dimension less than $n\geq 0$. Then, $\overline{{\tt GFlat}}_n(R)$ is closed under direct limits.\end{Lemma}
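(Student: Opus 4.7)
The plan is to proceed by induction on $n$, reducing the bounded statement to the fundamental fact that the class ${\tt GFlat}(R)$ of Gorenstein flat modules is itself closed under direct limits. This closure property over an arbitrary associative ring is a theorem of Saroch and Stovicek \cite{S-S}; earlier treatments established it only under coherent or noetherian hypotheses. The case $n=0$ is vacuous (no module has Gorenstein flat dimension strictly less than $0$), and the case $n=1$ is precisely this theorem of Saroch-Stovicek.

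For the inductive step, let $\{M_\alpha\}_{\alpha\in\Lambda}$ be a directed system of $R$-modules with $\textrm{Gfd}_R M_\alpha<n$ for all $\alpha$, and set $M=\varinjlim M_\alpha$. Choose, for each $\alpha$, a projective resolution $P_\bullet^\alpha\to M_\alpha$, arranged compatibly so that $\{P_\bullet^\alpha\}$ forms a directed system of complexes indexed by $\Lambda$ (a standard rectification, or an explicit functorial construction such as the bar resolution, suffices for this). Directed colimits preserve exactness and send projective modules to flat modules by Govorov-Lazard, so the complex $P_\bullet:=\varinjlim P_\bullet^\alpha$ is an exact resolution of $M$ by flat modules. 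Let $K_n$ and $K_n^\alpha$ denote the $n$-th syzygies of $P_\bullet$ and $P_\bullet^\alpha$, respectively. Exactness of directed colimits yields $K_n=\varinjlim K_n^\alpha$, and each $K_n^\alpha$ is Gorenstein flat since $\textrm{Gfd}_R M_\alpha\leq n-1$. By the base case applied to the system $\{K_n^\alpha\}$, the module $K_n$ is Gorenstein flat, hence $\textrm{Gfd}_R M\leq n-1<n$, which closes the induction.

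The principal technical obstacle is the construction of compatible projective resolutions, which is standard but requires care with functorial choices. A cleaner alternative sidesteps this issue entirely by using the Tor-characterization of Gorenstein flat dimension available via \cite{S-S}: for any integer $m\geq 0$, one has $\textrm{Gfd}_R M\leq m$ iff $\textrm{Tor}_i^R(I,M)=0$ for every injective right $R$-module $I$ and every $i>m$. Since $\textrm{Tor}$ commutes with directed colimits in its second argument and the test class of injective right $R$-modules is independent of $M$, the closure of $\overline{{\tt GFlat}}_n(R)$ under direct limits is then immediate.
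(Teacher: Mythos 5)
Your inductive argument is in the same spirit as the paper's, which works directly with length-$n$ truncated flat resolutions of the $M_\alpha$, takes direct limits termwise (relying, as you note, on a functorial choice of flat resolution), and quotes the closure of ${\tt GFlat}(R)$ under direct limits once, for the $n$-th syzygy. Your induction is a thin repackaging of this and neither gains nor loses anything; the paper simply phrases it without induction. However, both of your arguments contain issues that you should address.

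In the inductive argument there is an off-by-one slip. From $\textrm{Gfd}_R M_\alpha\leq n-1$ one gets that the $(n-1)$-th syzygy $K_{n-1}^\alpha$ is Gorenstein flat, and hence so is $K_n^\alpha$ (kernels of epimorphisms with Gorenstein flat source and target are again Gorenstein flat, since every ring is GF-closed). Passing to $K_n=\varinjlim K_n^\alpha$ and applying the base case therefore shows $K_n$ is Gorenstein flat, but this only gives $\textrm{Gfd}_R M\leq n$, not $\textrm{Gfd}_R M\leq n-1$ as you claim. You should instead run the argument with the $(n-1)$-th syzygies: each $K_{n-1}^\alpha$ is Gorenstein flat, $K_{n-1}=\varinjlim K_{n-1}^\alpha$ is Gorenstein flat by the base case, and then $\textrm{Gfd}_R M\leq n-1$ follows. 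With that fix the argument is correct.

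The ``cleaner alternative'' has a more serious gap. The equivalence ``$\textrm{Gfd}_R M\leq m$ if and only if $\textrm{Tor}_i^R(I,M)=0$ for every injective right $R$-module $I$ and every $i>m$'' is not an unconditional theorem over an arbitrary ring; the forward implication is true, but the converse requires an a priori finiteness hypothesis (or special ring conditions such as coherence). The paper itself is careful on this point: it only invokes the Tor-characterization of $\textrm{Gfd}$ after finiteness has already been established (e.g.\ in the proposition comparing $\textrm{Ghd}_k G$ with Ikenaga's invariants, and in the proposition on Pontryagin duals), and the Bouchiba-type result that ``$DM$ Gorenstein injective implies $M$ Gorenstein flat'' also carries the hypothesis $\textrm{Gfd}_R M<\infty$. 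Tor-vanishing against injectives gives you the left half of a totally acyclic flat complex for free, but it does not by itself produce the right half (the flat coresolution of $M$ that remains exact under $I\otimes_R-$), and over non-coherent rings there is no reason such a coresolution should exist. So this shortcut does not sidestep the resolution-compatibility issue; it replaces it with an unproved characterization.
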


\begin{proof}Consider a direct system of $R$-modules $\{M_i\}_{i\in I}$ such that $M_i\in \overline{{\tt GFlat}}_n(R)$ for every $i\in I$. Since $\textrm{Gfd}_{R}M_i\leq n$, there exists a truncated flat resolution of $M_i$ $$0\rightarrow K_{i,n}\rightarrow F_{i,n-1}\rightarrow \cdots \rightarrow F_{i,1}\rightarrow F_{i,0}\rightarrow M_i \rightarrow 0,$$ where $K_{i,n}$ is Gorenstein flat (see \cite[Corollary 4.12]{S-S} and \cite[Theorem 2.8]{Bennis}) for every $i\in I$. We obtain an exact sequence of $R$-modules of the form \begin{equation}\label{eq2224}0\rightarrow {\lim\limits_{\longrightarrow}}_i K_{i,n}\rightarrow {\lim\limits_{\longrightarrow}}_i F_{i,n-1}\rightarrow \cdots \rightarrow {\lim\limits_{\longrightarrow}}_i F_{i,1}\rightarrow {\lim\limits_{\longrightarrow}}_i F_{i,0}\rightarrow {\lim\limits_{\longrightarrow}}_i M_i \rightarrow 0,\end{equation} which constitutes a truncated flat resolution of ${\lim\limits_{\longrightarrow}}_i M_i$ of length $n$. Since the class ${\tt GFlat}(R)$ is closed under direct limits (see \cite[Corollary 4.12]{S-S}), we conclude that the $R$-module ${\lim\limits_{\longrightarrow}}_i K_{i,n}$ is Gorenstein flat. Invoking again \cite[Theorem 2.8]{Bennis} and the sequence (\ref{eq2224}), we infer that $\textrm{Gfd}_R ({\lim\limits_{\longrightarrow}}_i M_i)\leq n$, as needed.\end{proof}

%\begin{Lemma}\label{lemY}Let $k$ be a ring and consider an integer $n\geq 1$ and an exact sequence of $k$-modules $$0\rightarrow M_n \rightarrow \cdots \rightarrow M_1 \rightarrow M_0 \rightarrow M \rightarrow 0.$$ Then, $\textrm{Gfd}_{k}M\leq \textrm{max}\{\textrm{Gfd}_{k}M_i+i: i=0,\dots,n\}$.\end{Lemma}

%\begin{proof}We proceed by induction on $n\geq 1$. Since every ring is ${\tt GF}$-closed (see \cite[Corollary 4.12]{S-S}), the case $n=1$ follows from \cite[Theorem 2.11]{Bennis}. We assume now that $n>1$ and let $M'=\textrm{Im}(M_1 \rightarrow M_0)$. Applying the induction hypothesis on the exact sequence $$0\rightarrow M_n \rightarrow \cdots \rightarrow M_1 \rightarrow M' \rightarrow 0,$$ we obtain that $\textrm{Gfd}_{k}M'\leq \textrm{max}\{\textrm{Gfd}_{k}M_i+i-1: i=1,\dots,n\}$. Using again \cite[Theorem 2.11]{Bennis} and the short exact sequence $0\rightarrow M' \rightarrow M_0 \rightarrow M \rightarrow 0$, we conclude that $\textrm{Gfd}_{k}M\leq \textrm{max}\{\textrm{Gfd}_{k}M_0, \textrm{Gfd}_{k}M'+1\}\leq \textrm{max}\{\textrm{Gfd}_{k}M_i+i: i=0,1,\dots,n\}$. \end{proof}

We denote by $\overline{{\tt Flat}}(R)$ (resp. by $\overline{{\tt GFlat}}(R)$) the class of $R$-modules of finite flat dimension (resp. of finite Gorenstein flat dimension). Then, we have the following result.
\begin{Lemma}\label{lem22}$\overline{{\tt GFlat}}(R)\cap {\tt PGF}^{\perp}(R)=\overline{{\tt Flat}}(R)$.
\end{Lemma}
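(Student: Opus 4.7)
For the containment $\overline{{\tt Flat}}(R) \subseteq \overline{{\tt GFlat}}(R) \cap {\tt PGF}^{\perp}(R)$, that a module of finite flat dimension has finite Gorenstein flat dimension is already recorded in the Preliminaries. For the ${\tt PGF}$-orthogonality, I would use the inclusion ${\tt PGF}(R) \subseteq {\tt DP}(R)$ noted in the excerpt: a Ding projective module $P$ satisfies $\textrm{Ext}_R^i(P,F) = 0$ for every flat $F$ and every $i \geq 1$. A dimension shift along a finite flat resolution of $M$ then propagates this vanishing to $\textrm{Ext}_R^1(P,M) = 0$ for every $P \in {\tt PGF}(R)$, giving $M \in {\tt PGF}^{\perp}(R)$.

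For the reverse inclusion, take $M \in \overline{{\tt GFlat}}(R) \cap {\tt PGF}^{\perp}(R)$ with $n = \textrm{Gfd}_R M$, and proceed by induction on $n$ using the completeness and heredity of the cotorsion pair $({\tt PGF}(R), {\tt PGF}^{\perp}(R))$ due to Saroch--Stovicek. The special ${\tt PGF}$-precover of $M$ is a short exact sequence
\[
0 \to L \to P \to M \to 0
\]
with $P \in {\tt PGF}(R)$ and $L \in {\tt PGF}^{\perp}(R)$. Since ${\tt PGF}^{\perp}(R)$ is extension-closed (as a right $\textrm{Ext}^1$-orthogonal) and both $M$ and $L$ lie in it, we get $P \in {\tt PGF}(R) \cap {\tt PGF}^{\perp}(R)$. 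The defining ${\tt PGF}$ short exact sequence $0 \to P \to Q \to P' \to 0$ (with $Q$ projective and $P' \in {\tt PGF}(R)$) then splits by perpendicularity, so $P$ is a direct summand of $Q$ and is therefore projective. The standard dimension estimate across the above short exact sequence yields $\textrm{Gfd}_R L \leq \max\{0,n-1\}$; in the inductive step $n \geq 1$ the hypothesis gives $\textrm{fd}_R L < \infty$ and hence $\textrm{fd}_R M < \infty$.

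The chief obstacle is the base case $n = 0$: showing that every Gorenstein flat module lying in ${\tt PGF}^{\perp}(R)$ is flat. Iterating the ${\tt PGF}$-precover does not strictly decrease the Gorenstein flat dimension here, so one needs a separate input. The cleanest route, which in fact subsumes the entire reverse inclusion without induction, is to invoke the dual ${\tt PGF}$-preenvelope provided by the Saroch--Stovicek framework: for any $M$ with $\textrm{Gfd}_R M < \infty$ there exists a short exact sequence
\[
0 \to M \to A \to \overline{A} \to 0
\]
with $\textrm{fd}_R A < \infty$ and $\overline{A} \in {\tt PGF}(R)$. The hypothesis $\textrm{Ext}_R^1(\overline{A},M) = 0$ then splits this sequence, exhibiting $M$ as a direct summand of $A$, and so $\textrm{fd}_R M \leq \textrm{fd}_R A < \infty$, as needed.
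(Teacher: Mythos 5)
Your forward inclusion is sound, and you obtain $\overline{{\tt Flat}}(R) \subseteq {\tt PGF}^{\perp}(R)$ from the inclusion ${\tt PGF}(R) \subseteq {\tt DP}(R)$ together with a dimension shift, whereas the paper simply cites \cite[Theorem 4.11]{S-S} for flat modules lying in ${\tt PGF}^{\perp}(R)$. Both routes work.

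The converse is where the gap lies, and it is a circularity rather than a small slip. You correctly isolate the crux as the case $n=0$, namely that a Gorenstein flat module in ${\tt PGF}^{\perp}(R)$ must be flat, but the ``clean'' fix you offer does not stand on its own. Completeness of the cotorsion pair $({\tt PGF}(R),{\tt PGF}^{\perp}(R))$ produces, for any $M$, a short exact sequence $0 \to M \to A \to \overline{A} \to 0$ with $A \in {\tt PGF}^{\perp}(R)$ and $\overline{A}$ PGF; it does \emph{not} by itself give $\textrm{fd}_{R}A < \infty$. One only learns that $\textrm{Gfd}_{R}A < \infty$ (since every ring is GF-closed and $\overline{A}$ is Gorenstein flat) and that $A \in {\tt PGF}^{\perp}(R)$ --- and passing from there to finite flat dimension is \emph{precisely} the statement of Lemma \ref{lem22}. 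Indeed, the very approximation you invoke is Proposition \ref{Gfd} of the paper, whose proof explicitly appeals to Lemma \ref{lem22}, so it cannot be used as input here.

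The paper's own proof instead dimension-shifts along a truncated flat resolution of $M$: each flat term lies in ${\tt PGF}^{\perp}(R)$ by \cite[Theorem 4.11]{S-S}, heredity of the cotorsion pair (\cite[Theorem 4.9]{S-S}) keeps every syzygy in ${\tt PGF}^{\perp}(R)$, so the $n$-th kernel $K_n$ is a Gorenstein flat module lying in ${\tt PGF}^{\perp}(R)$, and the final step is exactly the $n=0$ case. That input (which the paper leaves somewhat implicit) comes from the \v{S}aroch--\v{S}\v{t}ov\'{\i}\v{c}ek description of Gorenstein flat modules via the ${\tt PGF}$ class, not from the cotorsion-pair preenvelope. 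Your induction would close once this is supplied, but as written it is incomplete.
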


\begin{proof}The inclusion $\overline{{\tt Flat}}(R)\subseteq \overline{{\tt GFlat}}(R)\cap {\tt PGF}^{\perp}(R)$, follows easily from \cite[Theorem 2.2]{Ben} and \cite[Theorem 4.11]{S-S}. We consider now an $R$-module $M\in \overline{{\tt GFlat}}(R)\cap {\tt PGF}^{\perp}(R)$ and let $\textrm{Gfd}_R M=n<\infty$. We also consider a truncated flat resolution of $M$ of length $n$: $$0\rightarrow K_n \rightarrow F_{n-1}\rightarrow \cdots \rightarrow F_0 \rightarrow M \rightarrow 0,$$ where $K_n$ is Gorenstein flat (see \cite[Theorem 2.8]{Bennis}). Since $F_i\in {\tt PGF}^{\perp}(R)$ for every $i=0,\dots ,n-1$ (see \cite[Theorem 4.11]{S-S}), and the class ${\tt PGF}^{\perp}(R)$ is closed under kernels of epimorphisms (see \cite[Theorem 4.9]{S-S}), we conclude that $K_n$ is flat and hence $\textrm{fd}_R M\leq n <\infty$, as needed.
\end{proof}

\begin{Proposition}\label{Gfd}Let $R$ be a ring and $m$ a nonnegative integer, and consider an $R$-module $M$ such that $\textrm{Gfd}_R M=m$. Then, there exists a short exact sequence of $R$-modules of the form $0\rightarrow M \rightarrow A \rightarrow G \rightarrow 0$, where $\textrm{fd}_R A= m$ and $G$ is PGF.
\end{Proposition}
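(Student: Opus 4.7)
The plan is to obtain the required sequence as an approximation sequence coming from the complete hereditary cotorsion pair $({\tt PGF}(R),{\tt PGF}^{\perp}(R))$ established by Šaroch and Šťovíček, and then to verify that the middle term has flat dimension exactly $m$. Concretely, the completeness of this cotorsion pair (which is part of the same body of results cited for the closure properties of ${\tt PGF}(R)$ and for \cite[Theorem 4.9]{S-S} used in Lemma \ref{lem22}) yields, for the given $M$, a short exact sequence
\[
0\rightarrow M\rightarrow A\rightarrow G\rightarrow 0
\]
with $G\in{\tt PGF}(R)$ and $A\in{\tt PGF}^{\perp}(R)$. Once this is in hand, the entire proof reduces to showing that $\textrm{fd}_R A=m$.

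First I would establish the upper bound $\textrm{fd}_R A\leq m$ in two steps. Using the standard two-out-of-three inequality for Gorenstein flat dimension on the above sequence, and the fact that every PGF module is Gorenstein flat (so $\textrm{Gfd}_R G=0$), I obtain
\[
\textrm{Gfd}_R A\leq \max\{\textrm{Gfd}_R M,\,\textrm{Gfd}_R G\}=m<\infty,
\]
so $A\in\overline{{\tt GFlat}}(R)$. At this point Lemma \ref{lem22} is the decisive tool: since $A$ lies simultaneously in $\overline{{\tt GFlat}}(R)$ and in ${\tt PGF}^{\perp}(R)$, it must lie in $\overline{{\tt Flat}}(R)$, i.e.\ $\textrm{fd}_R A<\infty$. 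Because Gorenstein flat and flat dimensions coincide on modules of finite flat dimension, we conclude $\textrm{fd}_R A=\textrm{Gfd}_R A\leq m$.

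For the matching lower bound I would apply the complementary two-out-of-three inequality to the same sequence, namely $\textrm{Gfd}_R M\leq\max\{\textrm{Gfd}_R A,\,\textrm{Gfd}_R G-1\}$. Since $\textrm{Gfd}_R G=0$ this reduces to $m=\textrm{Gfd}_R M\leq \textrm{Gfd}_R A=\textrm{fd}_R A$, giving equality.

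The main potential obstacle is not an analytical one but a bookkeeping one: the preliminaries mention that ${\tt PGF}(R)$ is closed under extensions, summands and kernels of epimorphisms, but do not explicitly invoke completeness of the cotorsion pair $({\tt PGF}(R),{\tt PGF}^{\perp}(R))$; this is where the proof truly rests, and it has to be cited carefully from \cite{S-S}. Beyond this, the argument is simply the combination of Lemma \ref{lem22} with the classical exact sequence inequalities for Gorenstein flat dimension.
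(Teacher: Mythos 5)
Your proof is correct and follows essentially the same route as the paper: obtain the approximation sequence from the complete cotorsion pair $({\tt PGF}(R),{\tt PGF}^{\perp}(R))$ of \cite[Theorem 4.9]{S-S}, invoke Lemma \ref{lem22} to conclude $A$ has finite flat dimension, and use the coincidence of $\textrm{Gfd}$ and $\textrm{fd}$ on such modules. The only cosmetic difference is that you derive $\textrm{Gfd}_R A = m$ by spelling out the two halves of the two-out-of-three inequality for Gorenstein flat dimension (which requires GF-closedness, i.e.\ \cite{Bennis} together with \cite[Corollary 4.12]{S-S}), whereas the paper cites \cite[Theorem 2.11]{Bennis} for this equality in one step.
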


\begin{proof}Since $\mathfrak{PGF}=({\tt PGF},{\tt PGF}^{\perp})$ is a complete hereditary cotorsion pair (see \cite[Theorem 4.9]{S-S}), there exists a short exact sequence of $R$-modules of the form 
	\begin{equation}\label{eq24}0\rightarrow M \rightarrow A \rightarrow G \rightarrow 0,\end{equation} where $A\in {\tt PGF}^{\perp}(R)$ and $G\in {\tt PGF}(R)$. Since every ring is ${\tt GF}$-closed, invoking \cite[Theorem 2.11]{Bennis}, the exact sequence (\ref{eq24}) yields $\textrm{Gfd}_R A= m$. Consequently, the $R$-module $A$ has finite flat dimension by Lemma \ref{lem22}. In particular, we have  $\textrm{fd}_RA=\textrm{Gfd}_R A= m$ (see \cite[Theorem 2.2]{Ben}).
\end{proof}
\smallskip
For every ring $R$ and every left $R$-module $M$, we denote by $DM$ the Pontryagin dual $\textrm{Hom}_{\mathbb{Z}}(M, \mathbb{Q}/\mathbb{Z})$ of $M$.

The following result is due to Bouchiba \cite[Theorem 4]{Bo}. We provide a shorter proof.

\begin{Proposition}\label{Ginj}Let $R$ be a ring and consider a left $R$-module $M$ such that $\textrm{Gfd}_R M<\infty$. Then, $DM$ is Gorenstein injective if and only if $M$ is Gorenstein flat.\end{Proposition}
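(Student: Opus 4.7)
\medskip

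\noindent\textbf{Proof plan.} The forward direction (Gorenstein flat $\Rightarrow$ Pontryagin dual Gorenstein injective) is the standard Pontryagin duality argument and does not require the finiteness of $\textrm{Gfd}_R M$. I would fix a complete flat resolution $\mathbf{F}$ with $M$ a syzygy. Applying $D=\textrm{Hom}_{\mathbb{Z}}(-,\mathbb{Q}/\mathbb{Z})$ gives an acyclic complex $D\mathbf{F}$ whose entries are injective (since $DF$ is injective whenever $F$ is flat, by the standard adjunction $\textrm{Hom}_R(-,DF)\cong D(F\otimes_R -)$), and this same adjunction translates the acyclicity of $I\otimes_R\mathbf{F}$ for every injective right $R$-module $I$ into the acyclicity of $\textrm{Hom}_R(I,D\mathbf{F})$. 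Hence $D\mathbf{F}$ is a complete injective resolution and $DM$ is Gorenstein injective as a cosyzygy.

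For the non-trivial direction, suppose $DM$ is Gorenstein injective and let $m=\textrm{Gfd}_R M<\infty$. The plan is to show $m=0$. I would invoke Proposition \ref{Gfd} to obtain a short exact sequence
\[
0\longrightarrow M\longrightarrow A\longrightarrow G\longrightarrow 0,
\]
with $\textrm{fd}_R A=m$ and $G$ a PGF-module (in particular Gorenstein flat). Applying $D$ produces a short exact sequence
\[
0\longrightarrow DG\longrightarrow DA\longrightarrow DM\longrightarrow 0.
\]
By the forward direction just proved, $DG$ is Gorenstein injective, and $DM$ is Gorenstein injective by hypothesis. Since the class of Gorenstein injective modules is closed under extensions, $DA$ is Gorenstein injective.

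Now I would use that $DA$ also has finite injective dimension, namely $\textrm{id}_R DA=\textrm{fd}_R A=m$, via the standard Pontryagin formula. A Gorenstein injective module of finite injective dimension is injective (because $\textrm{Gid}_R N=\textrm{id}_R N$ whenever the latter is finite, so $\textrm{id}_R DA=\textrm{Gid}_R DA=0$). Thus $DA$ is injective, which forces $A$ to be flat; equivalently $m=\textrm{fd}_R A=0$. Since $m=\textrm{Gfd}_R M$, we conclude that $M$ is Gorenstein flat, as required.

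The main obstacle, and the step I would double-check carefully, is the assertion that a Gorenstein injective module of finite injective dimension is itself injective; this is the injective analogue of the flat-dimension statement used in Lemma \ref{lem41} and is available in Holm's reference \cite{H1}. Everything else is extension closure of the Gorenstein injective class and the Pontryagin identity $\textrm{id}_R DA=\textrm{fd}_R A$, both standard.
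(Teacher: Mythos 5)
Your proof is correct, and the backward direction takes a genuinely different route from the paper's. Both arguments start from the same approximation sequence $0\to M\to A\to G\to 0$ supplied by Proposition \ref{Gfd}, but then diverge. The paper works directly with $\textrm{Tor}$: it invokes \cite[Theorem 2.22]{H1} together with the adjunction $D\textrm{Tor}_i^R(I,M)\cong\textrm{Ext}^i_R(I,DM)$ to get $\textrm{Tor}_i^R(I,M)=0$ for injective $I$, passes this vanishing to $A$ via the sequence, and then does a dimension shift to conclude that $A$ is flat, after which \cite[Theorem 4.11]{S-S} gives Gorenstein flatness of $M$. You instead dualize the whole sequence to $0\to DG\to DA\to DM\to 0$, use closure of Gorenstein injectives under extensions to see $DA$ is Gorenstein injective, combine this with $\textrm{id}_R DA=\textrm{fd}_R A<\infty$ and the fact that a Gorenstein injective module of finite injective dimension is injective (the dual of the absorption identity the paper already quotes from \cite{H1}) to force $DA$ injective, hence $A$ flat; and since Proposition \ref{Gfd} gives $\textrm{fd}_R A=\textrm{Gfd}_R M$ exactly, this forces $\textrm{Gfd}_R M=0$. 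Your argument avoids the $\textrm{Tor}$-vanishing and dimension-shift computations entirely and trades them for two clean structural facts about Gorenstein injectives; the paper's is more elementary in the ingredients it quotes but more computational. One small caveat worth noting when you write this up: the cited facts (extension closure of $\texttt{GInj}$ and $\textrm{Gid}=\textrm{id}$ when the latter is finite) do hold over arbitrary associative rings, as Holm's treatment confirms, so your appeal to \cite{H1} is legitimate at the stated level of generality.
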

\begin{proof}It is known that for every Gorenstein flat module $M$, the Pontryagin dual $DM$ of $M$ is Gorenstein injective (see \cite[Theorem 3.6]{H1}). Consequently, it suffices to show that if $\textrm{Gfd}_R M<\infty$ and $DM$ is Gorenstein injective, then $M$ is Gorenstein flat. Let $I$ be an injective right $R$-module. Since $DM$ is Gorenstein injective, invoking \cite[Theorem 2.22]{H1} and the adjointness isomorphism $D \textrm{Tor}_i^R(I,M)\cong \textrm{Ext}^i_R(I,DM)$, $i\geq 0$, we infer that $D \textrm{Tor}_i^R(I,M)=0$ for every $i>0$. Therefore, we conclude that $\textrm{Tor}_i^R(I,M)=0$ for every $i>0$. Since $\textrm{Gfd}_R M<\infty$ and every commutative ring is GF-closed (see \cite[Corollary 4.12]{S-S}), invoking \cite[Theorem 2.8]{Bennis} we conclude that $M$ is Gorenstein flat, as needed.
\end{proof}

	%Lemma \ref{Gfd} yields the existence of a short exact sequence of left $R$-modules of the form 	\begin{equation}\label{27.2}0\rightarrow M \rightarrow A \rightarrow G \rightarrow 0,	\end{equation} where $\textrm{fd}_R A<\infty$ and $G$ is PGF. Since $G$ is PGF (and hence Gorenstein flat), it follows that $\textrm{Tor}_i^R(I,G)=0$, for every $i>0$ (see \cite[Lemma 2.4]{Bennis}). Thus, the exact sequence (\ref{27.2}) implies that $\textrm{Tor}_i^R(I,A)=\textrm{Tor}_i^R(I,M)=0$, for every injective right $R$-module $I$ and every $i>0$. Let $\textrm{fd}_R A=n<\infty$ and consider a right $R$-module $N$ and a truncated injective resolution of $N$ of length $n$:	\begin{equation}\label{27.3}0\rightarrow N \rightarrow I_0 \rightarrow \cdots \rightarrow I_{n-1}\rightarrow K_n \rightarrow 0.\end{equation} Then, the exact equation (\ref{27.3}) yields $\textrm{Tor}_i^R(N,A)=\textrm{Tor}_{i+n}^R(K,A)=0$ for every $i>0$ and hence the left $R$-module $A$ is flat. Invoking \cite[Theorem 4.11]{S-S} and the exact sequence (\ref{27.2}), we conclude that $M$ is Gorenstein flat, as needed.

\subsection{Gedrich-Gruenberg invariants and Gorenstein global dimensions}The invariants $\textrm{silp}R$, $\textrm{spli}R$ were defined by Gedrich and Gruenberg in \cite{GG} as the supremum of the injective lengths (dimensions) of projective modules and the supremum of the projective lengths (dimensions) of injective modules, respectively. Analogously, the invariant $\textrm{sfli}R$ is defined as the supremum of the flat lengths (dimensions) of injective left $R$-modules, while the invariant $\textrm{silf}R$ is defined as the supremum of the injective lengths (dimensions) of flat modules. Emmanouil and Talelli proved that $\textrm{silf}R=\textrm{silp}R$ for any ring $R$ (see \cite[Proposition 2.1]{ET2}). The Gorenstein weak global dimension $\textrm{Gwgl.dim}R$ of the ring $R$ is defined as $\textrm{Gwgl.dim}R=\{\textrm{Gfd}_R M: \, M\in \textrm{Mod}(R)\}$. The following result provides a finiteness criterion for the Gorenstein weak global dimension of a ring $R$.
\begin{Theorem}\label{theo012}{\rm (\cite[Theorem 2.4]{CET})} Let $R$ be a unital associative ring. The following conditions are equivalent:
	\begin{itemize}
		\item[(i)] $\textrm{Gwgl.dim}R<\infty$
		\item[(ii)] $\textrm{Gfd}_R M<\infty$ for every module $M$
		\item[(iii)] $\textrm{sfli}R<\infty$ and $\textrm{sfli}R^{\textrm{op}}<\infty$
	\end{itemize}
	Then, $\textrm{Gwgl.dim}R=\textrm{sfli}R=\textrm{sfli}R^{\textrm{op}}<\infty$.
\end{Theorem}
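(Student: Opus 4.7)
The plan is to establish the two unconditional inequalities
\[
\textrm{sfli}R \;\leq\; \textrm{Gwgl.dim}R \;\leq\; \textrm{sfli}R^{\textrm{op}}
\]
in $[0,\infty]$, and then to invoke left-right symmetry to force equality of all three invariants. Once this is done, the equivalences (i)$\Leftrightarrow$(ii)$\Leftrightarrow$(iii) are essentially automatic: (i)$\Rightarrow$(ii) is tautological; (ii) applied to injective modules on both sides, combined with Lemma \ref{lem22}, forces both $\textrm{sfli}R$ and $\textrm{sfli}R^{\textrm{op}}$ to be finite, giving (iii); and (iii) feeds back through the second inequality to deliver (i). The displayed equality in the conclusion is exactly the content of the two inequalities together with symmetry.

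The first inequality is the easy half. I would take an injective left $R$-module $I$; since $\textrm{Ext}^{1}_{R}(X,I)=0$ for every $R$-module $X$, we have $I\in{\tt PGF}^{\perp}(R)$. If $\textrm{Gwgl.dim}R<\infty$ then $\textrm{Gfd}_{R}I<\infty$, and Lemma \ref{lem22} gives $I\in\overline{{\tt Flat}}(R)$, with $\textrm{fd}_{R}I=\textrm{Gfd}_{R}I\leq\textrm{Gwgl.dim}R$ by the standard agreement of flat and Gorenstein flat dimension on modules of finite flat dimension. Taking the supremum over injective $I$ yields $\textrm{sfli}R\leq\textrm{Gwgl.dim}R$.

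For the second inequality, assume $\textrm{sfli}R^{\textrm{op}}=n<\infty$; then $\textrm{Tor}_{i}^{R}(I,M)=0$ for every injective right $R$-module $I$, every left $R$-module $M$, and every $i>n$. The key tool is the unconditional Tor-characterization of Gorenstein flat dimension, valid for arbitrary rings thanks to the Saroch-Stovicek theorem that $({\tt GFlat}(R),\,{\tt GFlat}^{\perp}(R))$ is a complete hereditary cotorsion pair: for any $M$,
\[
\textrm{Gfd}_{R}M\leq n \;\Longleftrightarrow\; \textrm{Tor}_{n+1}^{R}(I,M)=0 \text{ for every injective right } R\text{-module } I.
\]
The hypothesis thus gives $\textrm{Gfd}_{R}M\leq n$ uniformly in $M$, so $\textrm{Gwgl.dim}R\leq n=\textrm{sfli}R^{\textrm{op}}$. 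Applying the same argument to $R^{\textrm{op}}$ produces the chain $\textrm{sfli}R^{\textrm{op}}\leq\textrm{Gwgl.dim}R^{\textrm{op}}\leq\textrm{sfli}R$, and combining with the first chain forces all the invariants involved to coincide.

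The main obstacle is the unconditional Tor-characterization of Gorenstein flat dimension used in the second inequality. Classical treatments obtained it only under a right-coherence assumption on $R$, and the passage to arbitrary rings is the genuinely nontrivial input of Saroch-Stovicek; without it, the uniform bound hidden in the equivalence (ii)$\Rightarrow$(i) seems out of reach. Once that tool is granted, the rest of the proof reduces to a clean interplay between Lemma \ref{lem22}, the Tor-characterization, and left-right symmetry.
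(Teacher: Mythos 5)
The paper does not reprove this theorem: it is quoted verbatim from Christensen--Estrada--Thompson \cite[Theorem~2.4]{CET} and used as a black box, so there is no in-paper proof to compare against. Judged on its own merits, your proposal has one correct half and one broken half.

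Your first inequality, $\textrm{sfli}R\le\textrm{Gwgl.dim}R$, is fine: an injective $I$ lies in ${\tt PGF}^\perp(R)$, so if $\textrm{Gfd}_RI<\infty$ then Lemma~\ref{lem22} gives $\textrm{fd}_RI=\textrm{Gfd}_RI<\infty$, and taking a supremum does the rest. The problem is the second inequality. You assert an \emph{unconditional} equivalence
$\textrm{Gfd}_RM\le n \iff \textrm{Tor}^R_{n+1}(I,M)=0$ for all injective right $I$,
attributed to \v{S}aroch--\v{S}\'tov\'i\v{c}ek. That equivalence is not a theorem over arbitrary rings. What is true (and what the paper repeatedly invokes via \cite[Theorem~2.8]{Bennis}, always \emph{after} finiteness has been secured) is the implication: \emph{if} $\textrm{Gfd}_RM<\infty$, then $\textrm{Gfd}_RM=\sup\{i:\textrm{Tor}^R_i(I,M)\ne0,\ I\text{ injective}\}$. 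The direction you need---Tor-vanishing against injectives forcing finiteness of $\textrm{Gfd}_RM$---is precisely the hard content. Neither the GF-closedness of all rings nor the completeness of the cotorsion pair $({\tt GFlat},{\tt GFlat}^\perp)$ hands you that direction; you still have to build the ``left half'' of a complete flat resolution of the $n$-th syzygy, and Tor-vanishing alone does not do it.

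You can see the gap from the shape of the theorem itself. If your argument worked, then $\textrm{sfli}R^{\textrm{op}}<\infty$ alone would give $\textrm{Gwgl.dim}R\le\textrm{sfli}R^{\textrm{op}}<\infty$, and then your first inequality would force $\textrm{sfli}R<\infty$ as well. That is, a one-sided sfli hypothesis would automatically imply the two-sided one. But condition (iii) requires \emph{both} sfli invariants to be finite, and the whole point of the CET paper (whose title is ``Gorenstein weak global dimension is symmetric'') is that this symmetry is a theorem requiring both hypotheses simultaneously, not a consequence of one side. The actual CET argument uses $\textrm{sfli}R$ and $\textrm{sfli}R^{\textrm{op}}$ together to manufacture the two-sided acyclic complex of flat modules witnessing Gorenstein flatness of the relevant syzygy; your scheme of ``two unconditional one-sided inequalities plus symmetry'' collapses the logical dependence in a way the underlying mathematics does not support.
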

It follows that for every ring $R$ such that $R\cong R^{\textrm{op}}$, the finiteness of $\textrm{Gwgl.dim}R$ is equivalent to the finiteness of the invariant $\textrm{sfli}R$.
\subsection{Group rings}Let $k$ be a commutative ring, $G$ be a group and $R=kG$ be the associated group algebra. Then, we have an isomorphism $kG\cong {(kG)}^{\textrm{op}}$. Indeed, the anti-isomorphism of $kG$ which is induced by the map $g\rightarrow g^{-1}$, $g\in G$, enables us to view every right $kG$-module $M$ as a left $kG$-module $M$. 
Let $M$, $N$ be two $kG$-modules. Then, the tensor product $M\otimes_k N$ is also a $kG$-module with the diagonal action of the group $G$; $g \cdot (x\otimes y) = gx \otimes gy$ for all $g\in G$, $x\in M$, $y\in N$.

\begin{Lemma}\label{rem1}Let $M$, $N$ be two $kG$-modules. Then: 
	\begin{itemize}
		\item [(i)]If $M$ is a flat $kG$-module and the $kG$-module $N$ is $k$-flat, then the $kG$-module $M\otimes_k N$ is also flat. 
		\item[(ii)]If the $kG$-module $N$ is $k$-flat, then $\textrm{fd}_{kG}(M\otimes_k N)\leq \textrm{fd}_{kG}M$. 
	\end{itemize}
	\end{Lemma}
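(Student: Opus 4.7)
The plan is to exploit the classical ``diagonal action trick'' for the group algebra $kG$: for any $kG$-module $N$, the map $\phi : kG \otimes_k N \to kG \otimes_k N$ sending $g \otimes n$ to $g \otimes g^{-1}n$ is a $kG$-module isomorphism that intertwines the diagonal $G$-action on the source with the action on the $kG$-factor alone on the target. First I would verify that when $N$ is $k$-flat, the $kG$-module $kG \otimes_k N$ with action on the left factor only is $kG$-flat: for any right $kG$-module $X$, the natural identification $X \otimes_{kG}(kG \otimes_k N) \cong X \otimes_k N$ shows that the functor $(-) \otimes_{kG}(kG \otimes_k N)$ is exact, since $N$ is $k$-flat. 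Combined with $\phi$, this yields that the diagonal $kG$-module $kG \otimes_k N$ is itself $kG$-flat.

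For part (i), I would bootstrap from this base case. If $M$ is a free $kG$-module, say $M \cong \bigoplus_j kG$, then $M \otimes_k N \cong \bigoplus_j (kG \otimes_k N)$ (with diagonal action) is $kG$-flat as a direct sum of $kG$-flat modules. For an arbitrary flat $kG$-module $M$, I would invoke Lazard's theorem to present $M$ as a direct limit of finitely generated free $kG$-modules $F_\alpha$; since tensoring over $k$ commutes with direct limits, $M \otimes_k N$ is the direct limit of the $kG$-flat modules $F_\alpha \otimes_k N$, and hence is itself $kG$-flat.

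Part (ii) would then follow by a standard resolution argument. Assuming $n = \textrm{fd}_{kG}M < \infty$ (the statement being vacuous otherwise), I take a flat $kG$-resolution $0 \to F_n \to \cdots \to F_0 \to M \to 0$ of length $n$ and tensor with $N$ over $k$. Since $N$ is $k$-flat the tensored sequence remains exact, and part (i) ensures that each $F_i \otimes_k N$ is a flat $kG$-module, giving $\textrm{fd}_{kG}(M \otimes_k N) \leq n$. The only non-routine ingredient is the diagonal action isomorphism $\phi$; once it has been used to untwist the $G$-action on $N$, all remaining steps reduce to standard manipulations with flat modules, direct limits and resolutions, so I do not anticipate a substantial obstacle.
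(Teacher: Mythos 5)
Your proposal is correct and follows essentially the same route as the paper: both arguments reduce part (i) to the case $M = kG$ via Govorov--Lazard and the fact that tensoring over $k$ commutes with direct limits, then derive part (ii) by tensoring a finite flat $kG$-resolution of $M$ with the $k$-flat module $N$. The only difference is that you spell out the diagonal-untwisting isomorphism $\phi(g\otimes n)=g\otimes g^{-1}n$ to justify that $kG\otimes_k N$ (with diagonal $G$-action) is $kG$-flat, a step the paper asserts without comment.
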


\begin{proof}(i) Since the direct limit of flat modules is flat and the direct limit commutes with the tensor functor, in view of the Govorov-Lazard theorem, it suffices to assume that $M=kG$. Then, $M\otimes_k N=kG\otimes_k N$ which is a flat $kG$-module. 
	
	(ii) It suffices to assume that $\textrm{fd}_{kG}M=n<\infty$. Consider a flat resolution $$0\rightarrow F_n \rightarrow \cdots \rightarrow F_1 \rightarrow F_0 \rightarrow M\rightarrow 0$$ of the $kG$-module $M$. Since $N$ is $k$-flat, we obtain the induced exact sequence $$0\rightarrow F_n\otimes_k N \rightarrow \cdots \rightarrow F_1\otimes_k N \rightarrow F_0\otimes_k N \rightarrow M\otimes_k N\rightarrow 0,$$ which constitutes a $kG$-flat resolution of $ M\otimes_k N$ by (i). Thus, $\textrm{fd}_{kG}(M\otimes_k N)\leq n$.\end{proof}

\begin{Lemma}\label{lemZ}Let $k$ be a commutative ring, $G$ be a group and $H$ be a normal subgroup of $G$. Then, for every flat $k[G/H]$-module $M$ and every $kG$-module $N$ which is flat as $kH$-module, the $kG$-module $M\otimes_k N$ is flat. 
		%\item[(ii)] For every $k[G/H]$-module $M$ and every $kG$-module $N$ which is flat as $kH$-module, we have $\textrm{fd}_{kG}(M\otimes_k N)\leq \textrm{fd}_{k[G/H]} M$.
\end{Lemma}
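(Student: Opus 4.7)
The plan is to follow the strategy of Lemma \ref{rem1}(i): reduce to a generating case via direct limits, and then identify the tensor product with an extension-of-scalars construction. First I would invoke the Govorov--Lazard theorem to write the flat $k[G/H]$-module $M$ as a direct limit of finitely generated free $k[G/H]$-modules. Since $-\otimes_k N$ commutes with direct limits and the class of flat $kG$-modules is closed under direct limits, it suffices to establish the lemma when $M=k[G/H]$, the case of a finite free $k[G/H]$-module $k[G/H]^n$ following by taking finite direct sums.

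The crux of the argument is then to produce a $kG$-linear isomorphism
\[
k[G/H]\otimes_k N \;\cong\; kG\otimes_{kH} N,
\]
where the left-hand side carries the diagonal $G$-action and the right-hand side carries the action induced by left multiplication on the first tensor factor. I would define the map $\phi\colon kG\otimes_{kH}N \to k[G/H]\otimes_k N$ by $g\otimes n \mapsto gH\otimes gn$; well-definedness over $kH$ uses the full $kG$-module structure on $N$ (so that $g$ acts on $n$ for any $g\in G$, not merely for $g\in H$), and a short computation verifies $kG$-equivariance. For the inverse, I would fix a set of coset representatives $\{g_i\}_{i\in I}$ of $G/H$ and send $g_iH\otimes n$ to $g_i\otimes g_i^{-1}n$, then check that the two composites are the identity.

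Once this isomorphism is in hand, the conclusion is standard: since $kG$ is a free right $kH$-module (on any set of coset representatives), the extension-of-scalars functor $kG\otimes_{kH}-$ carries flat $kH$-modules to flat $kG$-modules, as witnessed by the natural isomorphism $X\otimes_{kG}(kG\otimes_{kH}N)\cong X\otimes_{kH}N$ for every right $kG$-module $X$, whose right-hand side is exact in $X$ because restriction from $kG$ to $kH$ is exact and $N$ is flat over $kH$. I expect the main obstacle to be the bookkeeping in the diagonal-versus-extended-scalars identification; once the $kG$-equivariance of $\phi$ is correctly verified, the rest is formal. Note that this lemma recovers Lemma \ref{rem1}(i) in the special case $H=\{e\}$.
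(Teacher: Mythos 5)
Your proof is correct and follows essentially the same route as the paper's: reduce via Govorov--Lazard to $M=k[G/H]$, identify $k[G/H]\otimes_k N$ (diagonal action) with $\mathrm{Ind}^G_H\mathrm{Res}^G_H N$, and conclude by flatness of induction from $kH$ to $kG$. The only difference is that you spell out the isomorphism $g\otimes n\mapsto gH\otimes gn$ in detail, whereas the paper simply cites it from Brown.
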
	

\begin{proof}Since the direct limit of flat modules is flat and the direct limit commutes with the tensor functor, by the Govorov-Lazard theorem, it suffices to assume that $M=k[G/H]$. Then, $M\otimes_k N=k[G/H]\otimes_k N\cong \textrm{Ind}^G_H \textrm{Res}^G_H N$ which is a flat $kG$-module (see \cite[Proposition 5.6(a)]{Br}). \end{proof}
	
	%(ii) It suffices to assume that $\textrm{fd}_{k[G/H]} M=n$ is finite. Then, there exist flat $k[G/H]$-modules $F_0, F_1, \dots ,F_n$ and an exact sequence of $k[G/H]$-modules $$0\rightarrow F_n \rightarrow \cdots \rightarrow F_1 \rightarrow F_0 \rightarrow M \rightarrow 0.$$ Since the $kG$-module $N$ is flat as $kH$-module, it follows that $N$ is also $k$-flat and hence we obtain an exact sequence of $kG$-modules (with diagonal action) of the form $$0\rightarrow F_n\otimes_k N \rightarrow \cdots \rightarrow F_1\otimes_k N \rightarrow F_0\otimes_k N \rightarrow M\otimes_k N \rightarrow 0.$$ By (i) we have already prove, the $kG$-modules $F_i\otimes_k N$ are flat, for every $i=0,\dots ,n$, and hence the exact sequence of $kG$-modules above yields $\textrm{fd}_{kG}(M\otimes_k N)\leq n$, as needed.
	
	\begin{Lemma}\label{lem25}Let $k$ be a commutative ring, $G$ be a group and $H$ be a subgroup of $G$.\begin{itemize}\item[(i)] For every Gorenstein flat $kH$-module $M$, the $kG$-module $\textrm{Ind}^G_H M$ is also Gorenstein flat.
	\item[(ii)] For every $kH$-module, $\textrm{Gfd}_{kG}(\textrm{Ind}_H^G M) \leq \textrm{Gfd}_{kH} M$.
%	\item[(iii)] If the subgroup $H$ of $G$ is of finite index, then for every Gorenstein flat $kG$-module, the $kH$-module $\textrm{Res}^G_H M$ is Gorenstein flat as well.
	\end{itemize}  \end{Lemma}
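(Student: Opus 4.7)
The plan is to prove (i) first and then to deduce (ii) by applying the induction functor to a flat resolution whose $n$-th syzygy is Gorenstein flat. The key structural fact I will rely on is that $kG$ is free as a right $kH$-module (via a choice of left coset representatives of $H$ in $G$) and also as a left $kH$-module (via right coset representatives). In particular, the induction functor $\textrm{Ind}_H^G = kG\otimes_{kH}-$ is exact and carries flat $kH$-modules to flat $kG$-modules, as already noted in the proof of Lemma \ref{lemZ}; moreover, the restriction of scalars from right $kG$-modules to right $kH$-modules preserves injectivity, since its left adjoint $-\otimes_{kH}kG$ is exact.

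For (i), let $M$ be a Gorenstein flat $kH$-module and let $\textbf{F}$ be a complete flat resolution of $kH$-modules having $M$ as a syzygy. Applying $kG\otimes_{kH}-$ produces an acyclic complex $kG\otimes_{kH}\textbf{F}$ of flat $kG$-modules, with $\textrm{Ind}_H^G M$ occurring as a syzygy. To verify the completeness condition, consider an arbitrary injective right $kG$-module $J$. The associativity isomorphism
$$J\otimes_{kG}(kG\otimes_{kH}\textbf{F})\cong J\otimes_{kH}\textbf{F},$$
combined with the fact that $J$, viewed as a right $kH$-module by restriction, is still injective, reduces the claim to the acyclicity of $J\otimes_{kH}\textbf{F}$, which is guaranteed by the completeness of $\textbf{F}$ over $kH$. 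Hence $\textrm{Ind}_H^G M$ is Gorenstein flat as a $kG$-module.

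For (ii), I may assume that $n:=\textrm{Gfd}_{kH}M<\infty$. By \cite[Theorem 2.8]{Bennis}, there exists a truncated flat resolution of $kH$-modules
$$0\rightarrow K\rightarrow F_{n-1}\rightarrow\cdots\rightarrow F_0\rightarrow M\rightarrow 0,$$
in which $K$ is Gorenstein flat. Applying the exact functor $\textrm{Ind}_H^G$ yields an exact sequence of $kG$-modules whose interior terms $\textrm{Ind}_H^G F_i$ are flat and whose leftmost term $\textrm{Ind}_H^G K$ is Gorenstein flat by (i). Invoking once more \cite[Theorem 2.8]{Bennis}, this gives $\textrm{Gfd}_{kG}(\textrm{Ind}_H^G M)\leq n$, as required.

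The main obstacle is the completeness verification in (i): everything hinges on showing that the induced complex remains totally acyclic after tensoring with an arbitrary injective right $kG$-module, and this in turn rests on the freeness of $kG$ as a left $kH$-module, which ensures that restriction of scalars preserves injectivity. The remaining steps are formal consequences of the exactness of induction and the standard syzygy characterization of finite Gorenstein flat dimension.
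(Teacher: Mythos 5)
Your proof is correct and follows essentially the same route as the paper: in part (i), apply $kG\otimes_{kH}-$ to a complete flat resolution over $kH$, use associativity of tensor to reduce the completeness check to $kH$, and observe that restriction preserves injectivity. In part (ii) you use a truncated flat resolution with a Gorenstein flat $n$-th syzygy (via Bennis) rather than the paper's choice of a length-$n$ Gorenstein flat resolution, but this is an inessential variation; also note that the final implication $\bigl(0\to \textrm{Ind}_H^G K\to \textrm{Ind}_H^G F_{n-1}\to\cdots\to \textrm{Ind}_H^G M\to 0\bigr)\Rightarrow \textrm{Gfd}_{kG}(\textrm{Ind}_H^G M)\le n$ is immediate from the definition of Gorenstein flat dimension and does not require a second appeal to Bennis's theorem.
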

	
	\begin{proof} (i) Let $M$ be a Gorenstein flat $kH$-module. Then, there exists an acyclic complex of flat $kH$-modules $$\textbf{F}=\cdots \rightarrow F_{2}\rightarrow F_1\rightarrow F_0 \rightarrow F_{-1}\rightarrow \cdots$$ such that $M=\textrm{Im}(F_1 \rightarrow F_0)$ and the complex $I\otimes_{kH}\textbf{F}$ is exact whenever $I$ is an injective $kH$-module. Then, the induced complex $$\textrm{Ind}^G_H\textbf{F}=\cdots \rightarrow\textrm{Ind}^G_H F_2 \rightarrow\textrm{Ind}^G_H F_1\rightarrow\textrm{Ind}^G_H F_0 \rightarrow\textrm{Ind}^G_H F_{-1}\rightarrow \cdots$$ is an acyclic complex of flat $kG$-modules and has the $kG$-module $\textrm{Ind}^G_H M$ as syzygy. Moreover, for every injective $kG$-module $I$, the restricted $kH$-module $I$ is also injective. Thus, the isomorphism of complexes $I\otimes_{kG}\textrm{Ind}^G_H \textbf{F} \cong I\otimes_{kH}\textbf{F}$ implies that the $kG$-module $\textrm{Ind}^G_H M$ is also Gorenstein flat.
		
		(ii) It suffices to assume that $\textrm{Gfd}_{kH} M=n$ is finite. Then, there exist Gorenstein flat $kH$-modules $F_0, F_1, \dots ,F_n$ and an exact sequence of $kH$-modules $$0\rightarrow F_n \rightarrow \cdots \rightarrow F_1 \rightarrow F_0 \rightarrow M \rightarrow 0.$$ By (i) which we have already proved, the $kG$-modules $\textrm{Ind}^G_H F_i$ are Gorenstein flat for every $i=0,\dots ,n$. Thus, the induced exact sequence of $kG$-modules $$0\rightarrow\textrm{Ind}^G_H F_n \rightarrow \cdots \rightarrow\textrm{Ind}^G_H F_1 \rightarrow\textrm{Ind}^G_H F_0 \rightarrow\textrm{Ind}^G_H M \rightarrow 0$$ yields $\textrm{Gfd}_{kG}(\textrm{Ind}_H^G M) \leq n$. \end{proof}
		
		%(iii) Let $M$ be a Gorenstein flat $kG$ module. Then, there exists an acyclic complex of flat $kG$-modules $$\textbf{F}=\cdots \rightarrow F_{2}\rightarrow F_1\rightarrow F_0 \rightarrow F_{-1}\rightarrow \cdots,$$ such that $M=\textrm{Im}(F_1 \rightarrow F_0)$ and the complex $J\otimes_{kG}\textbf{F}$ is exact, whenever $J$ is an injective $kG$-module. Then, the restricted complex $$\textrm{Res}^G_H\textbf{F}=\cdots \rightarrow\textrm{Res}^G_H F_2 \rightarrow\textrm{Res}^G_H F_1\rightarrow\textrm{Res}^G_H F_0 \rightarrow\textrm{Res}^G_H F_{-1}\rightarrow \cdots,$$ is an acyclic complex of flat $kH$-modules and has the $kH$-module $\textrm{Res}^G_H M$ as syzygy. Let $I$ be an injective $kH$-module. Then, invoking \cite[Proposition 5.9 III]{Br}, we have the following isomorphisms of complexes $I\otimes_{kH} \textrm{Res}^G_H \textbf{F} \cong \textrm{Ind}^G_H I\otimes_{kG} \textbf{F}\cong \textrm{Coind}^G_H I\otimes_{kG} \textbf{F}$. Since the $kG$-module $\textrm{Coind}^G_H I$ is injective, we conclude that the complex $I\otimes_{kH} \textrm{Res}^G_H \textbf{F}$ is acyclic for every injective $kH$-module $H$, and hence the $kH$-module $\textrm{Res}^G_H M$ is Gorenstein flat, as needed.
	
	The following Lemma will be used several times in the sequel.
	
	\begin{Lemma}\label{lemm22}Let $k$ be a commutative ring, $G$ be a group and $H$ be a subgroup of $G$. Then, $\textrm{sfli}(kH)\leq \textrm{sfli}(kG)$. Moreover, equality holds if the subgroup $H$ of $G$ is of finite index.
	\end{Lemma}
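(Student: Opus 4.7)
The plan is to split the statement into two independent claims and attack each via a coinduction argument, exploiting that $kG$ is free as both a left and a right $kH$-module via the coset decompositions $G = \bigsqcup_{t \in T} Ht$ and $G = \bigsqcup_{t' \in T'} t'H$.

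For the inequality $\textrm{sfli}(kH) \leq \textrm{sfli}(kG)$, I start with an injective $kH$-module $I$ and form the coinduced $kG$-module $J = \textrm{Hom}_{kH}(kG, I)$, which is $kG$-injective since $\textrm{Hom}_{kH}(kG, -)$ is right adjoint to the exact restriction functor $\textrm{Res}^G_H$. The key step is to exhibit $I$ as a $kH$-direct summand of $\textrm{Res}^G_H J$. Choosing a transversal $T$ for the right cosets $H\backslash G$ with $1 \in T$, so $kG = \bigoplus_{t \in T} kH \cdot t$ as a left $kH$-module, I define a section $\sigma \colon I \to J$ of the $kH$-linear evaluation $\epsilon_I \colon f \mapsto f(1)$ by $\sigma(y)(h t) = h y$ if $t = 1$ and $0$ otherwise; a routine check, using that $1 \in T$ is the only representative with $Ht = H$, shows that both $\sigma(y)$ and $y\mapsto\sigma(y)$ are $kH$-linear. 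Since $kG$ is $kH$-flat from both sides, restriction sends $kG$-flat resolutions to $kH$-flat resolutions, so $\textrm{fd}_{kH} I \leq \textrm{fd}_{kH} \textrm{Res}^G_H J \leq \textrm{fd}_{kG} J \leq \textrm{sfli}(kG)$. Taking the supremum over injective $I$ yields the inequality.

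Now assume $[G \colon H] < \infty$; I will prove the reverse. Given $J$ a $kG$-injective module, $\textrm{Res}^G_H J$ is $kH$-injective, so $\textrm{Hom}_{kH}(kG, \textrm{Res}^G_H J)$ is $kG$-injective. The unit of the coinduction adjunction $\eta_J \colon J \to \textrm{Hom}_{kH}(kG, \textrm{Res}^G_H J)$, $j \mapsto (g \mapsto g j)$, is $kG$-linear and is a monomorphism (evaluate at $g = 1$). Since both source and target are $kG$-injective, $\eta_J$ splits in $\textrm{Mod}(kG)$, realising $J$ as a $kG$-direct summand of the coinduced module. The finite-index hypothesis now enters through the natural isomorphism $\textrm{Hom}_{kH}(kG, -) \cong kG \otimes_{kH} -$, so the coinduced module is identified with $\textrm{Ind}^G_H \textrm{Res}^G_H J = kG \otimes_{kH} J$. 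Because the functor $kG \otimes_{kH} -$ is exact (right $kH$-flatness of $kG$) and carries $kH$-flat modules to $kG$-flat modules, it satisfies $\textrm{fd}_{kG}(kG \otimes_{kH} J) \leq \textrm{fd}_{kH} J \leq \textrm{sfli}(kH)$, so $\textrm{fd}_{kG} J \leq \textrm{sfli}(kH)$. Supping over $J$ gives $\textrm{sfli}(kG) \leq \textrm{sfli}(kH)$, hence equality.

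The main obstacle is the reverse inequality in the finite-index case. A direct approach via the counit $kG \otimes_{kH} \textrm{Res}^G_H J \to J$ fails: the naive $kG$-section is a sum over a transversal, which composes with the counit to multiplication by $[G \colon H]$, not invertible in general (witness $J = k$ with $G = \mathbb{Z}/2$, where $k$ is not a $kG$-summand of $kG\otimes_{k}k = k[G]$). The resolution is to work with the \emph{unit} on the coinduction side: no explicit $kG$-section need be built; one simply uses that both source and target of $\eta_J$ are $kG$-injective to obtain the splitting abstractly. The identification $\textrm{Hom}_{kH}(kG,-) \cong kG \otimes_{kH} -$, valid precisely when $[G\colon H]<\infty$, then converts the $kG$-summand relation into the desired flat-dimension estimate via Shapiro's lemma.
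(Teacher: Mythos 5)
Your argument is correct, and it follows the same standard restriction/coinduction route that the paper implicitly invokes by citing Asadollahi et al.\ (Proposition 3.10): express an injective $kH$-module as a $kH$-summand of the restriction of its coinduction for the inequality, then use that coinduction coincides with induction when $[G\colon H]<\infty$ so that a $kG$-injective is a summand of $kG\otimes_{kH}\textrm{Res}^G_H J$ for the reverse. The explicit section $\sigma$ and the observation that a monomorphism out of an injective always splits are both sound; no gap.
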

	
	\begin{proof} The proof is the same as the proof of \cite[Proposition 3.10]{Asa} and holds over any commutative ring.\end{proof}	
	
	%It suffices to assume that $\textrm{sfli}(kG)=n<\infty$. Let $I$ be an injective $kH$-module. Since the $kG$-module $\textrm{Coind}^G_H I$ is injective, we have $\textrm{fd}_{kG}\textrm{Coind}^G_H I\leq n$ and hence $\textrm{fd}_{kH}\textrm{Coind}^G_H I\leq n$. As the $kH$-module $I$ is a direct $kH$-summand of $\textrm{Coind}^G_H I$, we obtain that $\textrm{fd}_{kH} I\leq n$. Consequently, we have $\textrm{sfli}(kH)\leq n$.
	
\section{Modules of finite Gorenstein flat dimension} In this section, we prove finiteness criteria for the Gorenstein homological dimension of a group $G$ over a commutative ring $k$ of finite Gorenstein weak global dimension. Moreover, we show that over any commutative ring of finite Gorenstein weak global dimension, the Gorenstein cohomological dimension bounds the Gorenstein homological dimension of a group. A useful tool in our proofs will be the existence of weak characteristic modules for groups.

	\begin{Lemma}\label{lemm29}
	Let $k$ be a commutative ring such that $\textrm{sfli}k<\infty$ and $G$ be a group. Then, every Gorenstein flat $kG$-module is Gorenstein flat as $k$-module and every PGF $kG$-module is PGF as $k$-module.
\end{Lemma}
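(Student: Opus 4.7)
The plan is to show that, if $M$ is a Gorenstein flat $kG$-module with complete flat resolution $\textbf{F}$ over $kG$, then the very same complex $\textbf{F}$, regarded over $k$ by restriction of scalars, witnesses that $M$ is Gorenstein flat over $k$. The argument splits into two ingredients: (a) each term of $\textbf{F}$ is $k$-flat, and (b) tensoring over $k$ with any injective $k$-module preserves the exactness of $\textbf{F}$. The same strategy will handle the PGF case, with $k$-projectivity replacing $k$-flatness.

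For (a), I would use that $kG$ is free as a $k$-module with basis $G$: by the Govorov-Lazard theorem, a flat $kG$-module is a direct limit of finitely generated free $kG$-modules, each of which is free hence flat as a $k$-module; since direct limits of $k$-flat modules are $k$-flat, the claim follows. In the PGF situation, each projective $kG$-module is a direct summand of a free $kG$-module, hence of a free $k$-module, and is therefore $k$-projective (and in particular $k$-flat).

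For (b), let $J$ be an injective (right) $k$-module and set $n = \textrm{sfli}k$, so that $\textrm{fd}_k J \leq n$. Write $Z_i$ for the $i$-th syzygy of $\textbf{F}$; each $Z_i$ is a Gorenstein flat $kG$-module and fits into a short exact sequence $0 \to Z_{i+1} \to F_{i+1} \to Z_i \to 0$. The $k$-flatness of $F_{i+1}$ together with the long exact sequence of Tor yields the dimension-shift isomorphism $\textrm{Tor}_{j+1}^k(J, Z_i) \cong \textrm{Tor}_j^k(J, Z_{i+1})$ for every $j \geq 1$. Iterating this $n$ times gives $\textrm{Tor}_1^k(J, Z_i) \cong \textrm{Tor}_{n+1}^k(J, Z_{i-n})$, which vanishes because $\textrm{fd}_k J \leq n$. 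Hence each short exact sequence $0 \to Z_{i+1} \to F_{i+1} \to Z_i \to 0$ remains exact after applying $J \otimes_k -$, and splicing these sequences proves that $J \otimes_k \textbf{F}$ is acyclic. Thus $\textbf{F}$ is a complete flat resolution of $M$ over $k$, and $M$ is Gorenstein flat as a $k$-module.

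The main obstacle lies precisely in (b): the original exactness hypothesis on $\textbf{F}$ is with respect to injective $kG$-modules, and it is the finiteness $\textrm{sfli}k < \infty$, channeled through the dimension-shift above, that converts this into the required vanishing of $\textrm{Tor}_1^k(J, Z_i)$ for every syzygy $Z_i$. For PGF modules the proof is verbatim the same: one starts from an acyclic complex $\textbf{P}$ of projective $kG$-modules satisfying the PGF condition, (a) now gives an acyclic complex of $k$-projective modules, and (b) is unchanged since projective implies flat.
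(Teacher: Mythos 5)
Your proof is correct and follows essentially the same strategy as the paper: restrict the complete flat (resp. projective) resolution of $M$ from $kG$ to $k$, observe that the terms stay $k$-flat (resp. $k$-projective), and use $\textrm{sfli}\,k<\infty$ to deduce that tensoring with any injective $k$-module preserves acyclicity. The paper states this implication without elaboration, while you supply the precise dimension-shift argument along the syzygies; note that this argument uses only that $\textbf{F}$ is an acyclic complex of $k$-flat modules, so the original $kG$-total-acyclicity hypothesis is not actually needed in step (b), contrary to what your concluding sentence suggests.
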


\begin{proof}
	Let $M$ be a Gorenstein flat $kG$-module. Then, there exists an acyclic complex of flat $kG$-modules $\textbf{F}$ which has as syzygy the $kG$-module $M$ and remains acyclic after the application of the functor $I\otimes_{kG} \_\!\_$, for every injective $kG$-module $I$. The restriction of $\textbf{F}$ is an acyclic complex of flat $k$-modules and the finiteness of $\textrm{sfli}k$ implies that the complex  $I\otimes_{k}\textbf{F}$ is acyclic for every injective $R$-module $I$. Thus, $M$ is Gorenstein flat as $k$-module. The proof of the PGF case is similar.
\end{proof}

The following result was first stated in \cite[Proposition 3.2]{Ren1}.
\begin{Proposition}\label{prop32} Let $k$ be a commutative ring such that $\textrm{sfli}k<\infty$ and consider a group $G$ and a $kG$-module $M$ of finite Gorenstein flat dimension. If the flat dimension $\textrm{fd}_k M$ is finite, then there exists a $k$-pure $kG$-exact sequence $0\rightarrow M \rightarrow A \rightarrow \overline{A} \rightarrow 0$, where $\overline{A}$ is a $k$-flat $kG$-module and $\textrm{Gfd}_{kG}M=\textrm{fd}_{kG}{A}$.
\end{Proposition}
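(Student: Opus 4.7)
The plan is to construct the sequence by applying Proposition \ref{Gfd} to the $kG$-module $M$ and then upgrade the resulting $\overline{A}$ from being merely PGF as a $kG$-module to being $k$-flat. Set $n = \textrm{Gfd}_{kG}M$. Proposition \ref{Gfd} produces a short exact sequence of $kG$-modules
\[ 0 \rightarrow M \rightarrow A \rightarrow \overline{A} \rightarrow 0 \]
with $\textrm{fd}_{kG}A = n$ and $\overline{A}$ PGF over $kG$. This already delivers the claimed equality $\textrm{Gfd}_{kG}M = \textrm{fd}_{kG}A$, so the remaining issues are the $k$-flatness of $\overline{A}$ and the $k$-purity of the sequence.

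First I would observe that since $kG$ is $k$-free, any $kG$-flat resolution of $A$ restricts to a $k$-flat resolution, so $\textrm{fd}_k A \leq \textrm{fd}_{kG}A < \infty$. Combining this with the hypothesis $\textrm{fd}_k M < \infty$ and the standard estimate applied to the short exact sequence above, we obtain $\textrm{fd}_k \overline{A} < \infty$. On the other hand, invoking Lemma \ref{lemm29} (which crucially uses $\textrm{sfli}k < \infty$), the PGF $kG$-module $\overline{A}$ is PGF also as a $k$-module.

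Now the decisive step is to apply Lemma \ref{lem41} to $\overline{A}$ viewed as a $k$-module: a PGF module of finite flat dimension is projective. Hence $\overline{A}$ is $k$-projective, and in particular $k$-flat. The $k$-purity of the sequence is then automatic: since $\textrm{Tor}_1^k(N, \overline{A}) = 0$ for every $k$-module $N$, the long exact $\textrm{Tor}$-sequence shows that $0 \to N \otimes_k M \to N \otimes_k A \to N \otimes_k \overline{A} \to 0$ is exact for every $N$, which is exactly the $k$-purity of the original sequence.

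The main obstacle is the transfer of the PGF structure from $kG$ to $k$, since this is what allows Lemma \ref{lem41} to promote the finite $k$-flat dimension of $\overline{A}$ all the way up to $k$-projectivity; without the assumption $\textrm{sfli}k < \infty$ one would lose the ability to identify $\overline{A}$ with an object of the class to which Lemma \ref{lem41} applies, and the argument would collapse. Everything else in the proof is essentially bookkeeping around the short exact sequence supplied by Proposition \ref{Gfd}.
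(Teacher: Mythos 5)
Your proof is correct and follows essentially the same route as the paper: decompose $M$ via Proposition~\ref{Gfd}, restrict $\overline{A}$ to a $k$-module via Lemma~\ref{lemm29}, note $\textrm{fd}_k\overline{A}<\infty$ from the finiteness of $\textrm{fd}_kM$ and $\textrm{fd}_kA$, and deduce $k$-flatness (hence $k$-purity). The only minor variation is in the last step: the paper uses that $\overline{A}$ is Gorenstein flat over $k$ with finite flat dimension, hence flat, whereas you pass through the stronger PGF property and Lemma~\ref{lem41} to conclude $k$-projectivity — a correct but slightly overpowered detour, since flatness is all that is needed.
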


\begin{proof}
	Let $\textrm{Gfd}_{kG}M=n<\infty$. By Proposition \ref{Gfd} there exists a short exact sequence of $kG$-modules $0\rightarrow M \rightarrow A \rightarrow \overline{A}\rightarrow 0$, where $\overline{A}$ is a PGF $kG$-module and $\textrm{fd}_{kG}A= n$. Since the $kG$-module $\overline{A}$ is also Gorenstein flat, invoking Lemma \ref{lemm29} we infer that $\overline{A}$ is a Gorenstein flat $k$-module. Moreover, the finiteness of $\textrm{fd}_{k}M$ and $\textrm{fd}_{k}A$ yields the finiteness of $\textrm{fd}_{k}\overline{A}$. Therefore, $\overline{A}$ is $k$-flat and hence the exact sequence $0\rightarrow M \rightarrow A \rightarrow \overline{A}\rightarrow 0$ is $k$-pure. \end{proof}

\begin{Corollary}\label{prop1}
	Let $k$ be a commutative ring such that $\textrm{sfli}k<\infty$ and $G$ be a group such that $\textrm{Ghd}_{k}G<\infty$. Then, there exists a $k$-pure monomorphism of $kG$-modules $\iota: k \rightarrow A$, where $A$ is a $k$-flat $kG$-module and $\textrm{fd}_{kG}A =\textrm{Ghd}_{k}G$.
\end{Corollary}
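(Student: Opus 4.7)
The plan is to apply Proposition \ref{prop32} directly to the trivial $kG$-module $M = k$. Since by definition $\textrm{Ghd}_{k}G = \textrm{Gfd}_{kG} k$, the assumption $\textrm{Ghd}_{k}G<\infty$ gives that $k$ has finite Gorenstein flat dimension as a $kG$-module. Moreover, $\textrm{fd}_k k = 0 <\infty$, so both hypotheses of Proposition \ref{prop32} are satisfied. The proposition then yields a $k$-pure short exact sequence of $kG$-modules
\[
0 \rightarrow k \xrightarrow{\iota} A \rightarrow \overline{A} \rightarrow 0,
\]
with $\overline{A}$ a $k$-flat $kG$-module and $\textrm{fd}_{kG}A = \textrm{Gfd}_{kG} k = \textrm{Ghd}_k G$.

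The only remaining point is to verify that $A$ itself is $k$-flat. This follows since the class of flat $k$-modules is closed under extensions: $k$ is $k$-flat (it is even free of rank one over itself) and $\overline{A}$ is $k$-flat by construction, so the extension $A$ is $k$-flat as well. The morphism $\iota$ is by construction a $k$-pure monomorphism.

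There is no real obstacle here, as the statement is essentially a specialization of the preceding proposition. The proof is simply a matter of verifying that the hypotheses of Proposition \ref{prop32} are met for $M=k$ and observing that $k$-flatness of $A$ is preserved under the extension.
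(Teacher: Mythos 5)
Your proof is correct and matches the paper's intended argument: the corollary is indeed the specialization of Proposition \ref{prop32} to $M=k$, noting that $\textrm{fd}_k k = 0$ and $\textrm{Ghd}_k G = \textrm{Gfd}_{kG} k$. Your observation that $A$ inherits $k$-flatness from the extension of the $k$-flat modules $k$ and $\overline{A}$ is a good explicit justification of a point the paper leaves implicit.
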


\begin{Proposition}\label{propara}Let $k$ be a commutative ring and $G$ be a group such that there exists a $k$-pure monomorphism of $kG$-modules $\iota: k \rightarrow A$, where $A$ is $k$-flat and $\textrm{fd}_{kG}A<\infty$. Then, for every $k$-flat $kG$-module $M$ we have $\textrm{Gfd}_{kG}M \leq \textrm{fd}_{kG}A$.
\end{Proposition}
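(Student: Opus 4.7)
Set $n=\textrm{fd}_{kG}A$. Since $\iota:k\to A$ is a $k$-pure $kG$-monomorphism and $A$ is $k$-flat, the induced short exact sequence of $kG$-modules $0\to k\to A\to\overline{A}\to 0$ is $k$-pure and $\overline{A}$ is again $k$-flat. For a $k$-flat $kG$-module $M$, applying $-\otimes_k M$ (endowing tensor products with the diagonal $G$-action) preserves exactness by $k$-purity, yielding a short exact sequence of $kG$-modules
$$0\to M\to A\otimes_k M\to\overline{A}\otimes_k M\to 0 \qquad (\star)$$
in which $\textrm{fd}_{kG}(A\otimes_k M)\leq n$ by Lemma~\ref{rem1}(ii), and $\overline{A}\otimes_k M$ is again $k$-flat. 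The standard Holm inequality $\textrm{Gfd}(X)\leq\max\{\textrm{Gfd}(Y),\textrm{Gfd}(Z)-1\}$ for a short exact sequence $0\to X\to Y\to Z\to 0$, applied to $(\star)$, reduces the desired bound $\textrm{Gfd}_{kG}(M)\leq n$ to the claim that $\overline{A}\otimes_k M$ is Gorenstein flat over $kG$.

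To construct a complete flat resolution of $\overline{A}\otimes_k M$, I would iterate the construction: set $N_0=\overline{A}\otimes_k M$ and $N_{i+1}=\overline{A}\otimes_k N_i$, so each $N_i$ is $k$-flat and fits into a $k$-pure short exact sequence $0\to N_i\to A\otimes_k N_i\to N_{i+1}\to 0$ with $\textrm{fd}_{kG}(A\otimes_k N_i)\leq n$. Splicing these yields an infinite coresolution of $N_0$ by $kG$-modules of flat dimension at most $n$. Fix a flat $kG$-resolution $F_\bullet\to A$ of length $n$; by Lemma~\ref{rem1}(i) each $F_j\otimes_k N_i$ is a flat $kG$-module and $F_\bullet\otimes_k N_i\to A\otimes_k N_i$ is a flat resolution. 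Assembling these into a bicomplex, passing to the total complex, and splicing with a flat resolution of $N_0$ on the left, one obtains an acyclic complex of flat $kG$-modules having $N_0$ as a syzygy.

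The main obstacle is to verify the completeness condition: that tensoring this acyclic complex of flats with any injective right $kG$-module $I$ preserves acyclicity. The plan is a dimension-shifting argument driven by the Tor-vanishing $\textrm{Tor}_j^{kG}(I,A\otimes_k N_i)=0$ for $j>n$ (valid since $\textrm{fd}_{kG}(A\otimes_k N_i)\leq n$), propagated through the long exact sequences attached to the successive short exact sequences $0\to N_i\to A\otimes_k N_i\to N_{i+1}\to 0$, and combined with the structural identification $(A\otimes_k V)^{\mathrm{diag}}\cong A\otimes_k V^{\mathrm{triv}}$ valid for $kG$-flat $A$ (which expresses $I\otimes_{kG}$ of each term of the coresolution in tractable form). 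Once the completeness is verified, $\overline{A}\otimes_k M$ is Gorenstein flat and $(\star)$ together with the Holm inequality yields $\textrm{Gfd}_{kG}(M)\leq n$, as required.
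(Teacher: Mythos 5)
Your opening steps track the paper only superficially: both start from the $k$-pure short exact sequence $0\to k\to A\to\overline{A}\to 0$ and the bound $\textrm{fd}_{kG}(A\otimes_k M)\le n$ from Lemma~\ref{rem1}(ii). But after that your route diverges in a way that creates two genuine problems.

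First, your reduction via Holm's inequality to the claim that $\overline{A}\otimes_k M$ is Gorenstein flat asks for far more than the statement gives, and is almost certainly false once $n=\textrm{fd}_{kG}A>0$. The proposition itself (applied to the $k$-flat module $\overline{A}\otimes_k M$) only says $\textrm{Gfd}_{kG}(\overline{A}\otimes_k M)\le n$; there is no reason for this Gorenstein flat dimension to drop to $0$. Iterating Holm's inequality along your chain $0\to N_i\to A\otimes_k N_i\to N_{i+1}\to 0$ only yields $\textrm{Gfd}(M)\le\max\{n,\textrm{Gfd}(N_m)-m-1\}$, and without an a priori bound on $\textrm{Gfd}(N_m)$ this does not close. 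The paper avoids this entirely by a syzygy shift: it shows that the $n$-th syzygy $M_n$ of a flat resolution of $M$ is Gorenstein flat, not that $\overline{A}\otimes_k M$ is. That shift is what makes all the right-hand terms $M_n\otimes_k\overline{A}^{\otimes j}\otimes_k A$ literally flat $kG$-modules (being $n$-th syzygies of modules of flat dimension $\le n$), so no bicomplex or total complex is needed.

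Second, your plan for the completeness check is not an argument, and its central tool is unavailable: the isomorphism $(A\otimes_k V)^{\mathrm{diag}}\cong A\otimes_k V^{\mathrm{triv}}$ requires $A$ to be a flat $kG$-module, whereas here $A$ is merely of finite flat dimension over $kG$. The paper's actual device is different and short. Given an injective $kG$-module $I$, tensoring the $k$-pure sequence with $I$ gives $0\to I\to A\otimes_k I\to\overline{A}\otimes_k I\to0$, which is $kG$-split because $I$ is injective; so $I$ is a direct summand of $A\otimes_k I$, and it suffices to show $(A\otimes_k I)\otimes_{kG}\mathfrak{F}$ is acyclic. Then the purely formal isomorphism $(A\otimes_k I)\otimes_{kG}\mathfrak{F}\cong I\otimes_{kG}(\mathfrak{F}\otimes_k A)$, together with the fact that $\mathfrak{F}\otimes_k A$ is acyclic with all syzygies flat (so each constituent short exact sequence stays exact after $I\otimes_{kG}-$), finishes. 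If you restructure the proof around the $n$-th syzygy $M_n$ as the paper does, both of your obstacles disappear; as written, the proposal has a strategic error in its reduction step and leaves the completeness verification unproved.
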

%Since $M$ is $k$-flat, the restricted exact sequence $\textbf{F}$ is $k$-pure implying the existence of 
\begin{proof} Let $\textrm{fd}_{kG} A =n$ and $M$ be a $k$-flat $kG$-module. We consider a $kG$-flat resolution $$\textbf{F}=\cdots \rightarrow F_2 \rightarrow F_1 \rightarrow F_0 \rightarrow M \rightarrow 0$$ of $M$ and let $M_i=\textrm{Im}(F_i \rightarrow F_{i-1})$, $i\geq 0$, where $M_0=M$, be the corresponding syzygy modules. Since every flat $kG$-module is Gorenstein flat, it suffices to prove that the $kG$-module $M_n$ is Gorenstein flat. Indeed, the $kG$-exact sequence $$0\rightarrow M_n \rightarrow F_{n-1} \rightarrow \cdots \rightarrow F_0 \rightarrow M \rightarrow 0$$ will then be a Gorenstein flat resolution of $M$ of length $n$. 
Since the $kG$-module $A$ is $k$-flat, we obtain an induced $kG$-exact sequence (with diagonal action)$$\textbf{F} \otimes_k A = \cdots \rightarrow F_1\otimes_k A \rightarrow F_0\otimes_k A \rightarrow M\otimes_k A \rightarrow 0.$$
\noindent Then, the exact sequence $\textbf{F} \otimes_k A$ constitutes a flat resolution of the $kG$-module $M\otimes_k A$ and $\textrm{fd}_{kG}(M\otimes_k A)\leq n$ (see Remark \ref{rem1}). Moreover, the corresponding $i$-th syzygies are the modules $(M_i \otimes_k A)_{i\geq 0}$. We obtain that the $kG$-modules $(M_i \otimes_k A)_{i\geq n}$ are flat. A similar argument implies that the diagonal $kG$-module $M_n \otimes_k N \otimes_k A$ is also flat for every $k$-flat $kG$-module $N$. Let $\overline{A}=\textrm{Coker}\iota$ and consider the $k$-pure short exact sequence of $kG$-modules $0\rightarrow k \xrightarrow{\iota} A \rightarrow \overline{A} \rightarrow 0$. Then, for every $j\geq 0$, we obtain a short exact sequence of $RG$-modules of the form 
$$0\rightarrow M_n \otimes_k \overline{A}^{\otimes j} \rightarrow M_n \otimes_k \overline{A}^{\otimes j} \otimes_k A \rightarrow M_n \otimes_k \overline{A}^{\otimes j+1}\rightarrow 0,$$ where we denote by $A^{\otimes j}$ the $j$-th tensor power of $A$ over $k$. Since the $kG$-module $\overline{A}$ is $k$-flat, we have that the diagonal $kG$-modules $M_n \otimes_k \overline{A}^{\otimes j}\otimes_k A$ are flat for every $j\geq 0$ and the splicing of the above short exact sequences yields the exact sequence $$0\rightarrow M_n \xrightarrow{\eta} M_n \otimes_k A \rightarrow M_n \otimes_k \overline{A}\otimes_R A \rightarrow M_n \otimes_k \overline{A}^{\otimes 2}\otimes_k A \rightarrow \cdots .$$ Splicing now the latter exact sequence with the flat $kG$-resolution $$\cdots \rightarrow F_{n+2}\rightarrow F_{n+1}\rightarrow F_n \xrightarrow{\epsilon} M_n \rightarrow 0$$ of $M_n$, we obtain an acyclic complex of flat $kG$-modules
	$$\mathfrak{F}=\cdots \rightarrow F_{n+2}\rightarrow F_{n+1}\rightarrow F_n \xrightarrow{\eta \epsilon}  M_n \otimes_k A \rightarrow M_n \otimes_k \overline{A}\otimes_k A \rightarrow M_n \otimes_k \overline{A}^{\otimes 2}\otimes_k A \rightarrow \cdots ,$$ which has syzygies the $kG$-modules $(M_i)_{i\geq n}$ and $(M_n \otimes_k \overline{A}^{\otimes j})_{j\geq 1}$. In order to prove that the $kG$-module $M_n$ is Gorenstein flat, it suffices to show that the complex $ I\otimes_{kG} \mathfrak{F}$ is acyclic for every injective $kG$-module $I$. Let $I$ be an injective $kG$-module. Then, the $k$-pure short exact sequence of $kG$-modules $0\rightarrow k \xrightarrow{\iota} A \rightarrow \overline{A} \rightarrow 0$ induces the short exact sequence of $kG$-modules (with diagonal action) $0\rightarrow I \rightarrow A\otimes_k I \rightarrow \overline{A}\otimes_k I \rightarrow 0$. The injectivity of the $kG$-module $I$ implies that the latter short exact sequence is $kG$-split. Thus, it suffices to show the acyclicity of the complex $(A\otimes_k I)\otimes_{kG}\mathfrak{F}$. Since the $kG$-module $A$ is $k$-flat, the complex of $kG$-modules (with diagonal action) $\mathfrak{F}\otimes_k A$ is acyclic with syzygies the flat $kG$-modules $(M_i \otimes_k A)_{i\geq n}$ and $(M_n \otimes_k \overline{A}^{\otimes j} \otimes_k A)_{j\geq 1}$. We conclude that the complex $(\mathfrak{F}\otimes_k A)\otimes_{kG}I\cong (A\otimes_k I)\otimes_{kG}\mathfrak{F}$ is acyclic. 
\end{proof}

\begin{Corollary}\label{cor1}Let $k$ be a commutative ring and $G$ be a group such that there exists a $k$-pure monomorphism of $kG$-modules $\iota: k \rightarrow A$, where $A$ is $k$-flat and $\textrm{fd}_{kG}A<\infty$. Then, $\textrm{Ghd}_{k}G \leq \textrm{fd}_{kG}A$.\end{Corollary}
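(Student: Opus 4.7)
The plan is to deduce this corollary directly from Proposition \ref{propara} by specializing the module $M$ appearing there to the trivial $kG$-module $k$ itself. Recall that the Gorenstein homological dimension is defined by $\textrm{Ghd}_k G = \textrm{Gfd}_{kG} k$, so the target inequality is just $\textrm{Gfd}_{kG} k \leq \textrm{fd}_{kG} A$.

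First, observe that the trivial $kG$-module $k$ is automatically $k$-flat, since as a $k$-module it is the ring $k$ itself, which is free (hence flat) of rank one. The hypotheses of Proposition \ref{propara} are precisely a $k$-pure $kG$-monomorphism $k \hookrightarrow A$ with $A$ a $k$-flat $kG$-module of finite $kG$-flat dimension, which is exactly what we have been given in the hypothesis of the corollary. Thus Proposition \ref{propara} applies with $M = k$, yielding $\textrm{Gfd}_{kG} k \leq \textrm{fd}_{kG} A$, which is what we want.

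There is essentially no obstacle here; the corollary is a direct instantiation of the previous proposition, and the only thing to verify is that the trivial $kG$-module $k$ qualifies as a $k$-flat $kG$-module, which is immediate. Therefore the proof will consist of one sentence invoking Proposition \ref{propara} together with the definition of $\textrm{Ghd}_k G$.
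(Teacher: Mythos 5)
Your proof is correct and matches the paper's intent exactly: Corollary \ref{cor1} is stated without a proof precisely because it is the instance $M = k$ of Proposition \ref{propara}, and your only substantive check — that the trivial $kG$-module $k$ is $k$-flat, so the hypotheses of the proposition apply — is the right one.
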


\begin{Definition}\label{defi}Let $k$ be a commutative ring and $G$ be a group. A weak characteristic module for $G$ over $k$ is a $k$-flat $kG$-module $A$ with $\textrm{fd}_{kG}A<\infty$ which admits a $k$-pure $kG$-linear monomorphism $\iota : k \rightarrow A$ (where $k$ is regarded as a trivial $kG$-module).
\end{Definition}

Given Definition \ref{defi}, the $k$-flat $kG$-module $A$ in Corollary \ref{prop1}, Proposition \ref{propara} and Corollary \ref{cor1} is a weak characteristic module for $G$ over $k$.  A weak characteristic module for $G$ over $R$ may not always exist and, if it exists, it is certainly not unique. However, the flat dimension of any weak characteristic module for $G$ over $k$ is uniquely determined by the pair $(k, G)$.

\begin{Corollary}\label{cor37}Let $k$ be a commutative ring and $G$ be a group. Then:
	\begin{itemize}
		\item[(i)]If $A, A'$ are two weak characteristic modules for $G$ over $k$, then $\textrm{fd}_{kG}A =\textrm{fd}_{kG}A'$.
		\item[(ii)]If there exists a weak characteristic module $A$ for $G$ over $k$, then $G$ has finite Gorenstein homological dimension over $k$ and ${{\textrm{Ghd}}_{k}G}\leq \textrm{fd}_{kG}A$.
		\item[(iii)]If $\textrm{sfli}k <\infty$ and ${{\textrm{Ghd}}_{k}G}<\infty$, then there exists a weak characteristic module $A$ for $G$ over $k$ with $\textrm{fd}_{kG} A={{\textrm{Ghd}}_{k}G}$.
	\end{itemize}
\end{Corollary}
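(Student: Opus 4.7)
The plan is to observe that all three parts follow immediately from results already established earlier in this section, essentially by pattern-matching to Definition \ref{defi}, so there is almost no new work to do.

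For part (ii), a weak characteristic module $A$ for $G$ over $k$ is, by Definition \ref{defi}, precisely a $k$-flat $kG$-module of finite $kG$-flat dimension admitting a $k$-pure $kG$-monomorphism $\iota : k \to A$. This is exactly the hypothesis of Corollary \ref{cor1}, which therefore yields $\textrm{Ghd}_k G \leq \textrm{fd}_{kG} A < \infty$. For part (iii), the hypotheses $\textrm{sfli}\, k < \infty$ and $\textrm{Ghd}_k G < \infty$ are the hypotheses of Corollary \ref{prop1}, whose conclusion produces a $k$-pure monomorphism $k \to A$ with $A$ being $k$-flat and $\textrm{fd}_{kG} A = \textrm{Ghd}_k G$. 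By Definition \ref{defi}, this $A$ is a weak characteristic module of the required flat dimension.

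The content of part (i) is the well-definedness of this invariant, and it is obtained by a short symmetric application of Proposition \ref{propara}. Given weak characteristic modules $A$ and $A'$, I would take $A$ in the role of the characteristic module in Proposition \ref{propara} and apply the proposition to the $k$-flat $kG$-module $M = A'$, obtaining $\textrm{Gfd}_{kG} A' \leq \textrm{fd}_{kG} A$. Since $\textrm{fd}_{kG} A' < \infty$, the standard fact that Gorenstein flat dimension agrees with flat dimension on modules of finite flat dimension (recalled in Section 2) gives $\textrm{Gfd}_{kG} A' = \textrm{fd}_{kG} A'$, and hence $\textrm{fd}_{kG} A' \leq \textrm{fd}_{kG} A$. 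Interchanging the roles of $A$ and $A'$ then yields the reverse inequality, so equality holds. Note that no assumption on $\textrm{sfli}\, k$ is needed for this argument; the hypothesis is used only in part (iii), to guarantee that a weak characteristic module exists in the first place.

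There is no real obstacle in the corollary itself; the substantive work has already been done in Proposition \ref{propara}, whose proof uses the tensor powers of the cokernel $\overline{A}$ to manufacture a complete flat resolution witnessing that $M_n$ is Gorenstein flat. The role of the present corollary is to package the previous results so that the $kG$-flat dimension of any weak characteristic module becomes a well-defined invariant of the pair $(k, G)$, equal to $\textrm{Ghd}_k G$ whenever $\textrm{sfli}\, k < \infty$.
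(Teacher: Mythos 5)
Your proposal is correct and matches the paper's own proof in every essential respect: parts (ii) and (iii) are, as you say, literal restatements of Corollary \ref{cor1} and Corollary \ref{prop1}, and part (i) is obtained by the same symmetric application of Proposition \ref{propara} together with the fact that Gorenstein flat dimension equals flat dimension on modules of finite flat dimension. Your remark that $\textrm{sfli}\,k<\infty$ is only needed for part (iii) is also accurate and consistent with the statement.
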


\begin{proof}(i) Let $A, A'$ be two weak characteristic modules for $G$ over $k$. Then, $\textrm{fd}_{kG}A <\infty$ and hence $\textrm{Gfd}_{kG}A =\textrm{fd}_{kG}A$ (see \cite[Theorem 2.2]{Ben}). Thus, Proposition \ref{propara} for the weak characteristic module $A'$ and the $k$-flat $kG$-module $A$ yields $\textrm{fd}_{kG}A=\textrm{Gfd}_{kG}A\leq \textrm{fd}_{kG}A'$. Reversing the roles of $A$ and $A'$, we obtain the inequality $\textrm{fd}_{kG}A'\leq \textrm{fd}_{kG}A$. We conclude that $\textrm{fd}_{kG}A=\textrm{fd}_{kG}A'$.
	
	Assertions (ii) and (iii) are precisely Corollaries \ref{cor1} and \ref{prop1}, respectively.
\end{proof}

\begin{Corollary}\label{cor38}Let $k$ be a commutative ring such that $\textrm{sfli}k<\infty$ and $G$ be a group. Then, $G$ has finite Gorenstein homological dimension over $k$ if and only if there exists a weak characteristic module $A$ for $G$ over $k$. In that case, we have $\textrm{fd}_{kG}A={{\textrm{Ghd}}_{k}G}$.
\end{Corollary}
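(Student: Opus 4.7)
The plan is to deduce this corollary directly from the three parts of Corollary \ref{cor37}, which together do all of the real work. Neither direction requires any additional argument beyond invoking those parts.

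For the \emph{if} direction, I would simply appeal to Corollary \ref{cor37}(ii): the existence of a weak characteristic module $A$ for $G$ over $k$ immediately yields $\textrm{Ghd}_k G \leq \textrm{fd}_{kG}A < \infty$, so $G$ has finite Gorenstein homological dimension over $k$. Notice that this implication does not actually use the assumption $\textrm{sfli}k<\infty$.

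For the \emph{only if} direction, I would invoke Corollary \ref{cor37}(iii): assuming $\textrm{sfli}k<\infty$ and $\textrm{Ghd}_k G<\infty$, we obtain a $k$-pure $kG$-monomorphism $\iota: k \rightarrow A$ with $A$ being $k$-flat and $\textrm{fd}_{kG}A = \textrm{Ghd}_k G$, which is by definition a weak characteristic module for $G$ over $k$.

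Finally, to establish $\textrm{fd}_{kG}A = \textrm{Ghd}_k G$ for an \emph{arbitrary} weak characteristic module $A$, I would combine the previous step with the uniqueness assertion Corollary \ref{cor37}(i): since some weak characteristic module $A'$ has $\textrm{fd}_{kG}A' = \textrm{Ghd}_k G$, and all weak characteristic modules for $G$ over $k$ share the same flat dimension over $kG$, we conclude $\textrm{fd}_{kG}A = \textrm{fd}_{kG}A' = \textrm{Ghd}_k G$. There is no technical obstacle here; the entire content of the corollary was already absorbed into Proposition \ref{propara} and its consequences, and this final statement is merely a synthesis of those results into a single clean equivalence.
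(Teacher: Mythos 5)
Your argument is correct and coincides with the approach the paper intends: Corollary \ref{cor38} is stated immediately after Corollary \ref{cor37} precisely because it follows by combining part (ii) for sufficiency, part (iii) for necessity, and part (i) to upgrade the equality $\textrm{fd}_{kG}A = \textrm{Ghd}_k G$ to an arbitrary weak characteristic module. No gaps.
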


\begin{Remark}\label{remfinal}\rm A characteristic module for $G$ over $k$ is a $k$-projective $kG$-module $A$ with $\textrm{pd}_{kG}A <\infty$ which admits a $k$-split $kG$-linear monomorphism $\iota: k \rightarrow A$. Thus, every characteristic module for $G$ over $k$ is also a weak characteristic module for $G$ over $k$.
\end{Remark} 

\begin{Theorem}\label{prop312}Let $k$ be a commutative ring such that $\textrm{sfli}k<\infty$ and $G$ be a group. Then, $$\textrm{Ghd}_k G \leq \textrm{Gcd}_k G.$$
\end{Theorem}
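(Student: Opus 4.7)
The plan is to reduce the inequality to the parallel existence theorems for characteristic and weak characteristic modules, exploiting the embedding of the former into the latter observed in Remark \ref{remfinal}.

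First, I would dispose of the trivial case $\textrm{Gcd}_k G = \infty$ and assume $\textrm{Gcd}_k G = d < \infty$. The next step is to invoke the analogue of Corollary \ref{cor38} for the Gorenstein cohomological dimension, which is already established in the literature on characteristic modules (\cite{ET,BDT,Tal}): whenever $\textrm{Gcd}_k G$ is finite, there exists a characteristic module $A$ for $G$ over $k$ with $\textrm{pd}_{kG}A = \textrm{Gcd}_k G = d$. That is, there is a $k$-split $kG$-monomorphism $\iota : k \hookrightarrow A$ with $A$ a $k$-projective $kG$-module of finite projective dimension $d$ over $kG$.

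Now Remark \ref{remfinal} applies: since any $k$-projective module is $k$-flat, any $kG$-module of finite projective dimension has finite flat dimension, and any $k$-split monomorphism is in particular $k$-pure, the characteristic module $A$ is automatically a weak characteristic module for $G$ over $k$ in the sense of Definition \ref{defi}. Therefore Corollary \ref{cor37}(ii) is at our disposal and yields
\[
\textrm{Ghd}_k G \leq \textrm{fd}_{kG}A \leq \textrm{pd}_{kG}A = d = \textrm{Gcd}_k G,
\]
which is exactly the claimed inequality.

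The main obstacle is verifying that the finiteness of $\textrm{Gcd}_k G$ truly produces a characteristic module with $\textrm{pd}_{kG}A$ equal (or at least bounded by) $\textrm{Gcd}_k G$; this is the cohomological counterpart of Corollary \ref{cor38}, and it is the point where the hypothesis $\textrm{sfli}k < \infty$ is absorbed (as it is used in the cited construction to guarantee that the approximation module coming from the $\textrm{PGF}$/Gorenstein projective cotorsion pair is actually $k$-projective, just as Lemma \ref{lemm29} is used in our proof of Corollary \ref{prop1}). Once that input is secured, the rest of the argument is essentially an application of the machinery already developed in this section, and no further calculation with complete resolutions is required.
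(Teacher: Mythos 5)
Your proposal is correct and follows essentially the same route as the paper: the paper also assumes $\textrm{Gcd}_k G < \infty$, invokes the existence of a characteristic module $A$ with $\textrm{pd}_{kG}A = \textrm{Gcd}_k G$ (citing \cite[Proposition 2.18(ii)]{St}, where the hypothesis $\textrm{sfli}\,k < \infty$ is used exactly as you anticipate), notes via Remark \ref{remfinal} that $A$ is a weak characteristic module, and concludes through Corollary \ref{cor1} together with $\textrm{fd}_{kG}A \leq \textrm{pd}_{kG}A$. You have correctly located the single nontrivial input (the cohomological analogue of Corollary \ref{cor38}) and the role of the $\textrm{sfli}\,k$ hypothesis there.
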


\begin{proof}It suffices to assume that $\textrm{Gcd}_k G <\infty$. Since $\textrm{sfli}k<\infty$, \cite[Proposition 2.10 (ii)]{St} implies that there exists a characteristic module $A$ for $G$ over $k$ such that $\textrm{Gcd}_k G=\textrm{pd}_{kG}A$. Then, $A$ is also a weak characteristic module for $G$ over $k$ (see Remark \ref{remfinal}) and hence Corollary \ref{cor1} yields the inequality $\textrm{Ghd}_k G \leq \textrm{fd}_{kG}A$. Since $\textrm{fd}_{kG}A \leq \textrm{pd}_{kG}A$, we conclude that $\textrm{Ghd}_k G \leq \textrm{Gcd}_k G.$
\end{proof}

\begin{Corollary}Let $G$ be a group. Then, $\textrm{Ghd}_{\mathbb{Z}} G \leq \textrm{Gcd}_{\mathbb{Z}} G.$
\end{Corollary}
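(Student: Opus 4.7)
The plan is to obtain this corollary as an immediate specialization of Theorem \ref{prop312} to the base ring $k=\mathbb{Z}$. The only hypothesis of Theorem \ref{prop312} that needs verification is the finiteness of $\textrm{sfli}\mathbb{Z}$, so essentially all the work lies in recording this standard fact.

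First I would recall that $\mathbb{Z}$ is a PID, hence hereditary, so its global dimension equals $1$. In particular, every injective $\mathbb{Z}$-module has projective (and therefore flat) dimension at most $1$, which gives $\textrm{sfli}\mathbb{Z}\leq 1<\infty$. Equivalently, one can cite the fact that $\mathbb{Q}/\mathbb{Z}$ and $\mathbb{Q}$ generate the injective $\mathbb{Z}$-modules up to direct sums, both of which are of flat dimension $\leq 1$.

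With this observation in hand, I would then directly apply Theorem \ref{prop312} to the commutative ring $k=\mathbb{Z}$ and the group $G$, yielding
\[
\textrm{Ghd}_{\mathbb{Z}} G \leq \textrm{Gcd}_{\mathbb{Z}} G,
\]
which is exactly the assertion of the corollary.

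There is no real obstacle here: the corollary is a one-line consequence of Theorem \ref{prop312} and the well-known finiteness $\textrm{sfli}\mathbb{Z}\leq 1$. The only thing to be careful about is to explicitly justify the hypothesis $\textrm{sfli}\mathbb{Z}<\infty$ before invoking the theorem, rather than taking it for granted.
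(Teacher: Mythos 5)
Your proposal is correct and matches the paper's (implicit) argument: the corollary is simply Theorem \ref{prop312} specialized to $k=\mathbb{Z}$, and the hypothesis $\textrm{sfli}\,\mathbb{Z}<\infty$ holds since $\mathbb{Z}$ has global dimension $1$.
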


\begin{Remark}\rm  The inequality in Theorem \ref{prop312} may be strict. Indeed, for every commutative ring $k$ such that $\textrm{sfli}k<\infty$ and every infinite locally finite group $G$ we have $\textrm{Ghd}_{k} G=0$ (see \cite[Remark 3.6]{St}), while $\textrm{Gcd}_{k} G>0$ (see \cite[Corollary 2.3]{ET}). It follows that, under the above conditions, the trivial $kG$-module $k$ is Gorenstein flat but not Gorenstein projective.
\end{Remark}

\begin{Proposition}\label{propdion}Let $k$ be a commutative ring and $G$ be a group. Then, $$\textrm{sfli}(kG)\leq \textrm{Ghd}_k G+ \textrm{sfli}k.$$
\end{Proposition}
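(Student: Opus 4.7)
The plan is to reduce to the case where both $\textrm{Ghd}_k G$ and $\textrm{sfli}k$ are finite (otherwise the right-hand side is $\infty$ and there is nothing to prove). Set $g = \textrm{Ghd}_k G$ and $s = \textrm{sfli}k$. Since $\textrm{sfli}k <\infty$ and $\textrm{Ghd}_k G<\infty$, Corollary~\ref{cor38} produces a weak characteristic module $A$ for $G$ over $k$ with $\textrm{fd}_{kG}A = g$; write $0\to k\xrightarrow{\iota} A\to\overline{A}\to 0$ for the associated $k$-pure $kG$-exact sequence. I would pick an arbitrary injective $kG$-module $I$ and aim to show $\textrm{fd}_{kG}I\leq g+s$; taking the supremum over $I$ then yields the claim.

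First I would observe that because $kG$ is free (in particular flat) as a $k$-module, restriction along $k\to kG$ preserves injectives, so $I$ is also injective as a $k$-module; in particular $\textrm{fd}_k I\leq s$. Next, tensoring the $k$-pure sequence over $k$ with $I$ (with the diagonal $G$-action on the middle and right terms) gives a short exact sequence of $kG$-modules
\[
0\to I \to A\otimes_k I \to \overline{A}\otimes_k I \to 0.
\]
Since $I$ is $kG$-injective, this sequence $kG$-splits, so $I$ is a $kG$-direct summand of $A\otimes_k I$ and therefore $\textrm{fd}_{kG}I \leq \textrm{fd}_{kG}(A\otimes_k I)$.

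It then remains to bound $\textrm{fd}_{kG}(A\otimes_k I)$ by $g+s$. For that I would pick a $k$-flat resolution of $I$ of length $s$,
\[
0\to G_s\to G_{s-1}\to \cdots \to G_0\to I\to 0,
\]
and tensor over $k$ with $A$. Since $A$ is $k$-flat, the resulting sequence of $kG$-modules (with diagonal action) stays exact and is a resolution of $A\otimes_k I$ of length $s$ by the modules $A\otimes_k G_i$. Each $G_i$ is $k$-flat, so Lemma~\ref{rem1}(ii) gives $\textrm{fd}_{kG}(A\otimes_k G_i)\leq \textrm{fd}_{kG}A = g$. A standard dimension-shifting argument along this resolution then yields $\textrm{fd}_{kG}(A\otimes_k I)\leq g+s$, completing the proof.

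The one genuinely non-mechanical step is the $kG$-splitting in the middle paragraph, which relies crucially on simultaneously exploiting the $k$-purity of the sequence $0\to k\to A\to\overline{A}\to 0$ (to obtain exactness after $-\otimes_k I$) and the $kG$-injectivity of $I$ (to obtain the splitting of the resulting $kG$-sequence); the remaining arguments are routine applications of Lemma~\ref{rem1} and dimension shifting.
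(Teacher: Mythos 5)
Your proposal is correct and follows the paper's own proof essentially step for step: obtain the $k$-pure $kG$-monomorphism $k\to A$ into a weak characteristic module with $\textrm{fd}_{kG}A=\textrm{Ghd}_k G$, split $I$ off of $A\otimes_k I$ using the $kG$-injectivity of $I$, and then bound $\textrm{fd}_{kG}(A\otimes_k I)$ by dimension shifting along a length-$s$ flat resolution via Lemma~\ref{rem1}(ii).

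One small precision point. For the dimension-shifting step you need the length-$s$ resolution
\[0\to G_s\to G_{s-1}\to\cdots\to G_0\to I\to 0\]
to be a complex of \emph{$kG$-modules} which are $k$-flat, not merely a $k$-flat resolution of the underlying $k$-module of $I$ (otherwise the diagonal $G$-action on $A\otimes_k G_i$ is not defined). The clean way to obtain this, and what the paper does, is to take an arbitrary flat resolution of $I$ \emph{as a $kG$-module}, truncate it at degree $s$, and use precisely your observation that $\textrm{fd}_kI\leq s$ (since $I$ is $k$-injective) to conclude that the $s$-th syzygy $I_s$ is $k$-flat together with the $kG$-flat terms $F_{s-1},\dots,F_0$. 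After tensoring this $kG$-resolution with $A$ over $k$ and applying Lemma~\ref{rem1}(ii), your argument goes through unchanged and coincides with the paper's.
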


\begin{proof}It suffices to assume that both ${{\textrm{Ghd}}_{k}G}=n$ and $\textrm{sfli}k=m$ are finite. Then, Proposition \ref{prop1} implies that there exists a $k$-pure monomorphism of $kG$-modules $0\rightarrow k \rightarrow A$, where $A$ is a $k$-flat $kG$-module and $\textrm{fd}_{kG}A =n$. Let $I$ be an injective $kG$-module. It follows that there exists an induced monomorphism of $kG$ modules (with diagonal action) $0\rightarrow I \rightarrow A\otimes_{k}I$. The injectivity of $I$ yields that the latter monomorphism is $kG$-split and hence it suffices to prove that $\textrm{fd}_{kG}(A\otimes_{k}I)\leq n+m$. Indeed, let $\textbf{F}$ be a flat resolution of the $kG$-module $I$ and $I_m=\textrm{Im}(F_m \rightarrow F_{m-1})$ be the corresponding $m$-syzygy. Since $\textrm{sfli}k=m$, we obtain an exact sequence of $kG$-modules $$0\rightarrow I_m \rightarrow F_{m-1} \rightarrow \cdots \rightarrow F_0 \rightarrow I \rightarrow 0,$$ where $I_m,F_{m-1},\dots ,F_0$ are all flat $k$-modules. The flatness of the $k$-module $A$ yields an induced exact sequence of $kG$-modules with diagonal action $$0\rightarrow  A\otimes_k I_m  \rightarrow  A \otimes_k F_{m-1} \rightarrow \cdots \rightarrow  A\otimes_k F_0 \rightarrow A\otimes_k I \rightarrow 0.$$ Since $\textrm{fd}_{kG}(A\otimes_k N)\leq \textrm{fd}_{kG}A$ for every $k$-flat $kG$-module $N$ (see Remark \ref{rem1} (ii)), the exact sequence above implies that $\textrm{fd}_{kG}(A\otimes_{k}I)\leq n+m$, as needed.
\end{proof}

%\begin{Remark}\rm We note that the finiteness of $\textrm{Ghd}_k G$ and $\textrm{sfli}k$ yields that every Gorenstein projective $kG$-module is PGF and hence is Gorenstein flat. Indeed, Proposition \ref{propdion} yields the finiteness of $\textrm{sfli}(kG)$, which means that every complex of flat (or projective) $kG$-modules remains acyclic after applying the functor $I\otimes_{kG} \_\!\_$, for every injective $kG$-module $I$. \end{Remark}

We now restate and prove our second main result (Theorem 1.2).

\begin{Theorem}\label{theo310}Let $k$ be a commutative ring such that $\textrm{sfli}k<\infty$. Then, the following conditions are equivalent for a group $G$:
	\begin{itemize}
		\item[(i)] $\textrm{Gfd}_{kG}M<\infty$ for every $kG$-module $M$.
		\item[(ii)] $\textrm{Ghd}_k G <\infty$.
		\item[(iii)] There exists a short exact sequence of $kG$-modules $0\rightarrow k \rightarrow A \rightarrow \overline{A} \rightarrow 0$, where $\textrm{fd}_{kG}A <\infty$ and the $kG$-module $\overline{A}$ is $PGF$.
		\item[(iv)]There exists a short exact sequence of $kG$-modules $0\rightarrow k \rightarrow A' \rightarrow \overline{A'} \rightarrow 0$, where $\textrm{fd}_{kG}A' <\infty$ and the $kG$-module $\overline{A'}$ is Gorenstein flat.
		\item[(v)] There exists a $k$-split $kG$-monomorphism $\iota: k \rightarrow A$, where $\textrm{fd}_{kG}A <\infty$ and the $kG$-module ${A}$ is $k$-projective.
		\item[(vi)] There exists a $k$-pure $kG$-monomorphism $\iota: k \rightarrow A'$, where $\textrm{fd}_{kG}A' <\infty$ and the $kG$-module ${A'}$ is $k$-flat.
		\item[(vii)] $\textrm{sfli}(kG) <\infty$.
		\end{itemize}
		Moreover, in this case, every Gorenstein projective $kG$-module is Gorenstein flat and hence we have $\textrm{Gfd}_{kG}M\leq\textrm{Gpd}_{kG}M$ for every $kG$-module $M$.
\end{Theorem}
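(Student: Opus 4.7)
The plan is to close the cycle $(\textrm{i})\Rightarrow(\textrm{ii})\Rightarrow(\textrm{iii})\Rightarrow(\textrm{iv})\Rightarrow(\textrm{vi})\Rightarrow(\textrm{ii})\Rightarrow(\textrm{vii})\Rightarrow(\textrm{i})$, handle $(\textrm{v})$ on the side, and then derive the final assertion. The equivalence $(\textrm{i})\Leftrightarrow(\textrm{vii})$ is Theorem \ref{theo012} applied to the ring $kG$, using the anti-isomorphism $g\mapsto g^{-1}$ to identify $kG\cong(kG)^{\textrm{op}}$, so that $\textrm{sfli}(kG)=\textrm{sfli}((kG)^{\textrm{op}})$. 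The implication $(\textrm{i})\Rightarrow(\textrm{ii})$ is specialisation to $M=k$, while $(\textrm{ii})\Rightarrow(\textrm{vii})$ follows from Proposition \ref{propdion} together with the standing hypothesis $\textrm{sfli}\,k<\infty$.

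For the interior of the cycle, $(\textrm{ii})\Rightarrow(\textrm{iii})$ is Proposition \ref{Gfd} applied to $M=k$ with $m=\textrm{Ghd}_{k}G$, and $(\textrm{iii})\Rightarrow(\textrm{iv})$ is immediate since every PGF module is Gorenstein flat. For $(\textrm{iv})\Rightarrow(\textrm{vi})$, from $0\to k\to A'\to \overline{A'}\to 0$ the finiteness of $\textrm{fd}_{kG}A'$ forces $\overline{A'}$ to have finite $k$-flat dimension; by Lemma \ref{lemm29}, $\overline{A'}$ is also Gorenstein flat as a $k$-module, so \cite[Theorem 2.2]{Ben} forces $\overline{A'}$ to be $k$-flat. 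Consequently the sequence is $k$-pure and $A'$ itself is $k$-flat as an extension of two $k$-flat modules. Finally, $(\textrm{vi})\Rightarrow(\textrm{ii})$ is exactly Corollary \ref{cor1}.

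The main obstacle is producing $(\textrm{v})$; the reverse $(\textrm{v})\Rightarrow(\textrm{vi})$ is tautological. Starting from the presentation in $(\textrm{iii})$, I would pull back along a $kG$-free cover $0\to K_0\to P_0\to \overline{A}\to 0$: by the closure of ${\tt PGF}(kG)$ under kernels of epimorphisms (\cite[Theorem 4.9]{S-S}), the kernel $K_0$ is itself PGF. The pullback $B=A\times_{\overline{A}}P_0$ fits into two short exact sequences $0\to k\to B\to P_0\to 0$ and $0\to K_0\to B\to A\to 0$. The first sequence is $k$-split since $P_0$ is $k$-projective; hence $B\cong k\oplus P_0$ as $k$-modules is $k$-projective and we obtain the desired $k$-split monomorphism $k\to B$. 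The delicate step is to conclude $\textrm{fd}_{kG}B<\infty$: the second sequence only gives $\textrm{Gfd}_{kG}B<\infty$ a priori, because $K_0$ is merely Gorenstein flat. Upgrading this via Lemma \ref{lem22} requires $B\in {\tt PGF}^\perp(kG)$, which I expect to obtain by iterating the pullback along a complete projective resolution of $\overline{A}$ until the PGF-obstruction becomes flat in the relevant sense; this is the technical heart of the argument.

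For the final claim, I would argue directly from $(\textrm{vii})$. Let $N$ be a Gorenstein projective $kG$-module with totally acyclic resolving complex $\mathbf{P}$ of $kG$-projectives, and let $I$ be an injective $kG$-module. Since $m:=\textrm{sfli}(kG)<\infty$ bounds $\textrm{fd}_{kG}I$, dimension shifting along the iterated coresolving sequences $0\to N^{(k)}\to Q_k\to N^{(k+1)}\to 0$ (with $Q_k$ projective and $N^{(k)}$ Gorenstein projective, $N^{(0)}=N$) gives $\textrm{Tor}_i^{kG}(I,N)\cong\textrm{Tor}_{i+m+1}^{kG}(I,N^{(m+1)})=0$ for every $i\geq 1$. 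Hence $I\otimes_{kG}\mathbf{P}$ is acyclic, so $N$ is Gorenstein flat. Then any Gorenstein projective resolution of a $kG$-module $M$ of length $\textrm{Gpd}_{kG}M$ is simultaneously a Gorenstein flat resolution, establishing $\textrm{Gfd}_{kG}M\leq \textrm{Gpd}_{kG}M$.
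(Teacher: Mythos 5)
Most of your cycle coincides with the paper's: the equivalence of (i) and (vii) via $kG\cong (kG)^{\mathrm{op}}$ and Theorem \ref{theo012}, $(\textrm{ii})\Rightarrow(\textrm{iii})$ via Proposition \ref{Gfd}, $(\textrm{ii})\Rightarrow(\textrm{vii})$ via Proposition \ref{propdion}, $(\textrm{vi})\Rightarrow(\textrm{ii})$ via Corollary \ref{cor1}, and your $(\textrm{iv})\Rightarrow(\textrm{vi})$ argument (restriction to $k$, Lemma \ref{lemm29}, then ``finite flat dimension $+$ Gorenstein flat $\Rightarrow$ flat'') are exactly the paper's. Your treatment of the final claim also works, though you should note that the Tor-vanishing argument has to be applied to \emph{every} syzygy of the totally acyclic complex $\mathbf{P}$ (which is licit since they are all Gorenstein projective) before you can conclude $I\otimes_{kG}\mathbf{P}$ is acyclic; the paper phrases this as ``every syzygy in an acyclic complex of projective $kG$-modules is PGF when $\textrm{sfli}(kG)<\infty$.''

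The genuine gap is in producing $(\textrm{v})$, and your proposed fix does not repair it. From $0\to K_0\to B\to A\to 0$ with $\textrm{fd}_{kG}A<\infty$, one has $\textrm{Tor}_i^{kG}(M,K_0)\cong \textrm{Tor}_i^{kG}(M,B)$ for all $i>\textrm{fd}_{kG}A+1$ and any $kG$-module $M$, so $\textrm{fd}_{kG}B<\infty$ \emph{forces} $\textrm{fd}_{kG}K_0<\infty$, hence (by Lemma \ref{lem41}) $K_0$ projective and $\textrm{pd}_{kG}\overline{A}\leq 1$. That degenerate situation is precisely what the construction is trying to avoid, so generically $\textrm{fd}_{kG}B=\infty$, and iterating the pullback along a projective resolution of $\overline{A}$ reproduces the same obstruction at each stage (each new kernel is again PGF but not flat). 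Lemma \ref{lem22} cannot help either, because there is no mechanism forcing $B\in{\tt PGF}^\perp(kG)$. The missing idea in the paper's $(\textrm{iii})\Rightarrow(\textrm{v})$ is that you do not need to change the module $A$ at all: since $\textrm{sfli}k<\infty$, Lemma \ref{lemm29} shows $\overline{A}$ is PGF \emph{as a $k$-module}; by closure of ${\tt PGF}(k)$ under extensions (and $k$ itself being $k$-projective, hence PGF over $k$), $A$ is PGF over $k$; together with $\textrm{fd}_k A\leq\textrm{fd}_{kG}A<\infty$, Lemma \ref{lem41} gives that $A$ is $k$-projective; and the inclusion $k\to A$ is $k$-split because $\textrm{Ext}^1_k(\overline{A},k)=0$ by \cite[Theorem 4.11]{S-S}. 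With this, the same $A$ that witnesses $(\textrm{iii})$ already witnesses $(\textrm{v})$, and your auxiliary $B$ is not needed.
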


\begin{proof}The implications $(i)\Rightarrow (ii)$, $(iii)\Rightarrow (iv)$ and $(v)\Rightarrow (vi)$ are trivial. Since $kG\cong {(kG)}^{\textrm{op}}$, invoking Theorem \ref{theo012} we obtain the implication $(vii)\Rightarrow (i)$. Moreover, the implication $(ii)\Rightarrow (iii)$ is a direct consequence of Lemma \ref{Gfd}, the implication $(ii)\Rightarrow (vii)$ follows from Proposition \ref{propdion} and the implication $(vi)\Rightarrow (ii)$ follows from Corollary \ref{cor1}.
	
$(iii)\Rightarrow (v):$ Since $\textrm{sfli}k<\infty$, the PGF $kG$-module $\overline{A}$ is PGF as $k$-module (see Lemma \ref{lemm29}). Since the class ${\tt PGF}(k)$ is closed under extensions, we obtain that the $kG$-module $A$ is also PGF as  $k$-module. Moreover, $\textrm{fd}_k A \leq \textrm{fd}_{kG} A<\infty$. Thus, invoking Lemma \ref{lem41} we conclude that the $kG$-module $A$ is $k$-projective. Furthermore, the exact sequence $0\rightarrow k \rightarrow A \rightarrow \overline{A}\rightarrow 0$ is $k$-split, since $\textrm{Ext}^1_k (\overline{A},k)=0$ (see \cite[Theorem 4.11]{S-S}).	

$(iv)\Rightarrow (vi):$ Since $\textrm{sfli}k<\infty$ the Gorenstein flat $kG$-module $\overline{A'}$ is Gorenstein flat as $k$-module (see Lemma \ref{lemm29}). Moreover, we have $\textrm{fd}_k A' \leq \textrm{fd}_{kG} A' <\infty$ and hence the exact sequence $0\rightarrow k \rightarrow A' \rightarrow \overline{A'} \rightarrow 0$ yields $\textrm{fd}_k \overline{A'}<\infty$. Consequently, the $kG$-module $\overline{A'}$ is $k$-flat and the exact sequence $0\rightarrow k \rightarrow A' \rightarrow \overline{A'} \rightarrow 0$ is $k$-pure.

Since $kG\cong {(kG)}^{\textrm{op}}$, the finiteness of $\textrm{sfli}(kG)<\infty$ implies that every syzygy module in a complex of projective $kG$-modules is a PGF $kG$-module. Therefore, every Gorenstein projective $kG$-module is PGF and hence Gorenstein flat. Consequently, we have $\textrm{Gfd}_{kG}M\leq\textrm{Gpd}_{kG}M$ for every $kG$-module $M$.\end{proof}

\begin{Remark} \rm Proposition \ref{propdion} and Theorem \ref{theo310} (i),(ii),(vi) and (vii) are also stated in \cite[Theorem 3.7]{Ren1}. Here, we have provided a different method of proof.
\end{Remark}

\begin{Corollary}\label{corr44}Let $k$ be a commutative ring such that $\textrm{sfli}k<\infty$ and $G$ be a group such that there exists a weak characteristic module for $G$ over $k$. Then, the classes of Gorenstein projective, PGF and Ding projective $kG$-modules all coincide.
\end{Corollary}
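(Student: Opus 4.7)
The plan is to reduce the statement to the fact, already established in the proof of Theorem \ref{theo310}, that $\textrm{sfli}(kG)<\infty$ forces every Gorenstein projective $kG$-module to be PGF. Combined with the general inclusions $\mathtt{PGF}(kG)\subseteq\mathtt{DP}(kG)\subseteq\mathtt{GProj}(kG)$ recorded in Section 2, this will yield the coincidence of all three classes.

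First, I would invoke Corollary \ref{cor37}(ii) to conclude that the existence of a weak characteristic module for $G$ over $k$ gives $\textrm{Ghd}_k G<\infty$, i.e.\ condition (ii) of Theorem \ref{theo310} holds. Under the standing assumption $\textrm{sfli}k<\infty$, Theorem \ref{theo310} is then applicable and yields condition (vii), namely $\textrm{sfli}(kG)<\infty$. Since the anti-isomorphism $g\mapsto g^{-1}$ gives $kG\cong(kG)^{\mathrm{op}}$, we also have $\textrm{sfli}(kG)^{\mathrm{op}}<\infty$, and so by Theorem \ref{theo012} the ring $kG$ has finite Gorenstein weak global dimension. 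In particular, every injective $kG$-module has finite flat dimension.

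Now let $M$ be any Gorenstein projective $kG$-module and fix a complete projective resolution $\mathbf{P}$ of $M$. To show $M$ is PGF it suffices to verify that $I\otimes_{kG}\mathbf{P}$ is acyclic for every injective $kG$-module $I$. Because $\textrm{fd}_{kG}I<\infty$, a standard dimension-shifting argument works: replace $I$ by a finite flat resolution, and note that for a flat module $F$ the complex $F\otimes_{kG}\mathbf{P}$ is exact (since $\mathbf{P}$ is exact and $F$ is a filtered colimit of finitely generated projectives, each of which preserves acyclicity of $\mathbf{P}$ by the complete projective resolution property). A hyperhomology/spectral sequence argument, or a direct induction on $\textrm{fd}_{kG}I$ using the long exact sequence coming from a short exact sequence $0\to K\to F\to I\to 0$ with $F$ flat and $\textrm{fd}_{kG}K<\textrm{fd}_{kG}I$, concludes that $I\otimes_{kG}\mathbf{P}$ is acyclic as well. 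Thus every syzygy of $\mathbf{P}$, and in particular $M$, is PGF.

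This establishes $\mathtt{GProj}(kG)\subseteq\mathtt{PGF}(kG)$, and combined with the inclusions $\mathtt{PGF}(kG)\subseteq\mathtt{DP}(kG)\subseteq\mathtt{GProj}(kG)$ from Section 2 we obtain equality of all three classes. The only mildly delicate point is the dimension-shift showing that finite flat dimension of every injective $kG$-module suffices to ensure acyclicity of $I\otimes_{kG}\mathbf{P}$; but this is routine once $\textrm{sfli}(kG)<\infty$ is in hand, and indeed is exactly the argument already used implicitly in the last paragraph of the proof of Theorem \ref{theo310}.
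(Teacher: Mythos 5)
Your proof is correct and follows essentially the same route as the paper: it reduces to the fact, established in the last paragraph of the proof of Theorem~\ref{theo310}, that $\textrm{sfli}(kG)<\infty$ forces $\mathtt{GProj}(kG)\subseteq\mathtt{PGF}(kG)$, and then concludes via the inclusions $\mathtt{PGF}(kG)\subseteq\mathtt{DP}(kG)\subseteq\mathtt{GProj}(kG)$. One marginal remark: acyclicity of $F\otimes_{kG}\mathbf{P}$ for $F$ flat follows directly from flatness of $F$ and acyclicity of $\mathbf{P}$, not from the complete projective resolution property (which concerns $\mathrm{Hom}$, not tensor); this slip does not affect the argument.
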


\begin{proof}Invoking Theorem \ref{theo310} and its proof, we infer that every Gorenstein projective $kG$-module is PGF. Therefore, using the inclusions ${\tt PGF}(kG)\subseteq {\tt DP}(kG)\subseteq {\tt GProj}(kG)\subseteq{\tt PGF}(kG)$, we conclude that the classes of Gorenstein projective, PGF and Ding projective $kG$-modules all coincide. \end{proof}

\begin{Corollary}Let $k$ be a commutative ring such that $\textrm{sfli}k<\infty$ and $G$ be a group such that there exists a weak characteristic module for $G$ over $k$. Then, every flat Gorenstein projective $kG$-module is projective.
\end{Corollary}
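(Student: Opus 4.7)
The plan is to chain together the two results that immediately precede the statement, together with the foundational Lemma \ref{lem41}. Let $M$ be a flat Gorenstein projective $kG$-module. Since $\textrm{sfli}\,k<\infty$ and a weak characteristic module for $G$ over $k$ exists, Corollary \ref{corr44} applies and tells us that every Gorenstein projective $kG$-module is PGF. In particular, $M$ is a PGF $kG$-module.

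Next, I would observe that $M$ is flat, hence $\textrm{fd}_{kG}M=0<\infty$. Thus $M$ is a PGF $kG$-module of finite (even zero) flat dimension. At this point Lemma \ref{lem41}, applied to the ring $R=kG$, immediately gives that $M$ is projective. This completes the argument.

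There is essentially no obstacle to overcome: the substantive work has already been done in Theorem \ref{theo310} (which ensures via Corollary \ref{corr44} the coincidence ${\tt GProj}(kG)={\tt PGF}(kG)$ under our hypotheses) and in Lemma \ref{lem41} (which forces any flat PGF module to be projective). The corollary is simply the combination of these two facts, and one could even state it as a one-line deduction: a flat Gorenstein projective $kG$-module is a flat PGF $kG$-module, and any such module is projective.
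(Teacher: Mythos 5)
Your argument is correct and matches the paper's own proof exactly: both reduce the statement to Corollary \ref{corr44} (Gorenstein projective $=$ PGF under these hypotheses) followed by Lemma \ref{lem41} (a flat PGF module is projective). Nothing further to add.
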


\begin{proof}This follows from Lemma \ref{lem41} and Corollary \ref{corr44}.
\end{proof}

\begin{Corollary}\label{prop45}Let $k$ be a commutative ring such that $\textrm{sfli}k<\infty$ and $G$ be a group such that there exists a weak characteristic module for $G$ over $k$. Then, a $kG$-module $M$ is Gorenstein flat if and only if its Pontryagin dual $DM$ is Gorenstein injective.  
\end{Corollary}

\begin{proof}Since $\textrm{sfli}k<\infty$, invoking Theorem \ref{theo310}, the existence of a weak characteristic module implies that every $kG$-module has finite Gorenstein flat dimension. Therefore, Proposition \ref{Ginj} implies that a $kG$-module $M$ is Gorenstein flat if and only if its Pontryagin dual $DM$ is Gorenstein injective, as needed.  
\end{proof}

\begin{Corollary}Let $k$ be a commutative ring such that $\textrm{sfli}k<\infty$ and $G$ be a group such that there exists a weak characteristic module for $G$ over $k$. Then, $\textrm{Gfd}_{kG}M=\textrm{Gid}_{kG}DM$ for every $kG$-module $M$.
\end{Corollary}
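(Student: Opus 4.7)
The plan is to prove the two inequalities $\textrm{Gid}_{kG}DM \leq \textrm{Gfd}_{kG}M$ and $\textrm{Gfd}_{kG}M \leq \textrm{Gid}_{kG}DM$ by a dimension–shifting argument that uses Proposition \ref{prop45} to transport between Gorenstein flat syzygies of $M$ and Gorenstein injective cosyzygies of $DM$. The standing hypotheses, together with Theorem \ref{theo310}, force every $kG$–module to have finite Gorenstein flat dimension, which is what allows Proposition \ref{prop45} to be applied to the syzygies appearing below.

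For the first inequality I would set $n=\textrm{Gfd}_{kG}M$ and choose a flat resolution of $M$ whose $n$-th syzygy $K_n$ is Gorenstein flat (\cite[Theorem 2.8]{Bennis}), giving the exact sequence $0 \to K_n \to F_{n-1} \to \cdots \to F_0 \to M \to 0$. Applying the exact Pontryagin duality functor $D$ yields $0 \to DM \to DF_0 \to \cdots \to DF_{n-1} \to DK_n \to 0$, in which each $DF_i$ is injective (the Pontryagin dual of a flat module is injective) and, by Proposition \ref{prop45}, $DK_n$ is Gorenstein injective. This exhibits a Gorenstein injective coresolution of $DM$ of length $n$, so $\textrm{Gid}_{kG}DM \leq n$.

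For the converse, let $m=\textrm{Gid}_{kG}DM$ and run the same construction, but starting from a flat resolution of $M$ truncated at step $m$, obtaining $0 \to DM \to DF_0 \to \cdots \to DF_{m-1} \to DK_m \to 0$. Since this is an injective coresolution of $DM$ of length $m$ and $\textrm{Gid}_{kG}DM \leq m$, the terminal cosyzygy $DK_m$ must be Gorenstein injective (the Gorenstein–injective analogue of \cite[Theorem 2.8]{Bennis}). Applying Proposition \ref{prop45} in the opposite direction identifies $K_m$ as Gorenstein flat, whence $\textrm{Gfd}_{kG}M \leq m$.

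The main obstacle is the cosyzygy criterion used in the converse, since the reader has to accept the dual form of Bennis's theorem for the class of Gorenstein injective modules. To sidestep this, an equivalent route is to combine the adjointness $D\textrm{Tor}^{kG}_i(I,M) \cong \textrm{Ext}^i_{kG}(I,DM)$ with the vanishing $\textrm{Ext}^{>0}_{kG}(I,G)=0$ for $I$ injective and $G$ Gorenstein injective (immediate from the definition of Gorenstein injectivity, by reading off the acyclicity of $\textrm{Hom}_{kG}(I,\textbf{I})$ in a complete injective resolution), and then invoke the characterization \cite[Theorem 3.14]{H1} of the Gorenstein flat dimension via Tor–vanishing against injective modules, which is applicable here precisely because Theorem \ref{theo310} guarantees that $\textrm{Gfd}_{kG}M$ is a priori finite.
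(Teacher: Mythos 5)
Your proof is correct and is precisely the dimension-shifting argument one would expect: the paper states this Corollary without proof as an immediate consequence of Proposition \ref{prop45}, and your two inequalities supply that argument in full. A few small remarks. For the first inequality, the claim $\textrm{Gid}_{kG}DM\leq\textrm{Gfd}_{kG}M$ actually holds over any ring without any hypothesis: dualize a length-$n$ Gorenstein flat resolution of $M$; each dual term is Gorenstein injective by \cite[Theorem 3.6]{H1}, so invoking Proposition \ref{prop45} there is overkill (though not wrong). For the converse, the cosyzygy criterion you call ``the Gorenstein-injective analogue of \cite[Theorem 2.8]{Bennis}'' is exactly \cite[Theorem 2.22]{H1} (condition (iv)), so there is no gap there; and you correctly note that its hypothesis ($\textrm{Gid}_{kG}DM<\infty$) is supplied by the first inequality together with Theorem \ref{theo310}, which guarantees $\textrm{Gfd}_{kG}M<\infty$. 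Your alternative route via the adjointness $D\textrm{Tor}^{kG}_i(I,M)\cong\textrm{Ext}^i_{kG}(I,DM)$ and the Tor/Ext characterizations of the two dimensions for modules of finite Gorenstein dimension is equally valid, and arguably slightly cleaner, but it rests on the same two facts (Theorem \ref{theo310} for finiteness and Holm's characterization) so it is not a materially different proof.
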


\section{Applications on infinite groups} The goal of this section is to obtain Gorenstein analogues of well known properties of modules in classical homological algebra over the group ring $R=kG$, where $k$ is a commutative ring of finite Gorenstein weak global dimension and $G$ is a group of type $\Phi_k$ or an $\textsc{LH}\mathfrak{F}$ group of type FP$_{\infty}$. 

We recall that a group $G$ is said to be of type FP$_{\infty}$ over $k$ if the trivial $kG$-module $k$ is of type FP$_{\infty}$; i.e. it admits a $kG$-projective resolution $\cdots \rightarrow P_n \rightarrow P_{n-1}\rightarrow \cdots \rightarrow P_0 \rightarrow k \rightarrow 0,$  where $P_i$ is finitely generated and projective for every $i\geq 0$.

The class $\textsc{H}\mathfrak{F}$ was defined by Kropholler in \cite{Kr}. This is the smallest class of groups which contains the class $\mathfrak{F}$ of finite groups and is such that whenever a group $G$ admits a finite dimensional
contractible $G$-CW-complex with stabilizers in $\textsc{H}\mathfrak{F}$, then we also have $G\in \textsc{H}\mathfrak{F}$. The class $\textsc{LH}\mathfrak{F}$ consists of those groups, all of whose finitely generated subgroups are in $\textsc{H}\mathfrak{F}$. All
soluble groups, all groups of finite virtual cohomological dimension and all automorphism
groups of Noetherian modules over a commutative ring are $\textsc{LH}\mathfrak{F}$-groups. The class $\textsc{LH}\mathfrak{F}$ is closed under extensions, ascending unions, free products with amalgamation and HNN extensions.

Let $k$ be a commutative ring. A group $G$ is said to be of type $\Phi_k$ if it has the property that for every $kG$-module $M$, $\textrm{pd}_{kG}M<\infty$ if and only if $\textrm{pd}_{kH}M<\infty$ for every finite subgroup $H$ of $G$. These groups were defined over $\mathbb{Z}$ in \cite{Ta}. Over a commutative ring $k$ of finite global dimension, every group of finite virtual cohomological dimension and every group which acts on a tree with finite stabilizers is of type $\Phi_k$ (see \cite[Corollary 2.6]{MS}).

We consider again a commutative ring $k$ and the $kG$-module $B(G,k)$ which consists of all functions from $G$ to $k$ whose image is a finite subset of $k$. Then, $B(G,k)$ is $k$-free and $kH$-free for every finite subgroup $H$ of $G$. For every $\lambda \in k$, the constant function $\iota(\lambda)\in B(G,k)$ with value $\lambda$ is invariant under the action of $G$. The map $\iota: k \rightarrow B(G,k)$ which is defined in this way is then $kG$-linear and $k$-split. Indeed, for every fixed element $g\in G$, there exists a $k$-linear splitting for $\iota$ by evaluating functions at $g$. Moreover, the cokernel $\overline{B}(G,k)$ of $\iota$ is $k$-free (see \cite[Lemma 3.3]{Kr2} and \cite[Lemma 3.4]{BC}). We note that the $kG$-module $B(G,k)$ is a candidate for a weak characteristic module for any group $G$ over any commutative ring $k$.

\begin{Remark}\rm \label{rem51}Given a commutative ring $k$ and a group $G$, the $kG$-module $B(G,k)$ is weak characteristic if and only if $\textrm{fd}_{kG}B(G,k)<\infty$. It follows from Theorem \ref{theo310} and \cite[Theorem 6.7]{Ste} that for any commutative ring $k$ of finite Gorenstein weak global dimension and any $\textsc{LH}\mathfrak{F}$ group $G$, the flat dimension $\textrm{fd}_{kG}B(G,k)$ is finite if and only if there exists a weak characteristic module for $G$ over $k$.
\end{Remark}

\begin{Remark}\rm \label{rem52}Let $k$ be a commutative ring and $G$ be a group such that $\textrm{pd}_{\mathbb{Z}G}B(G,\mathbb{Z})<\infty$. Then, $\textrm{pd}_{kG}B(G,k)\leq \textrm{pd}_{\mathbb{Z}G}B(G,\mathbb{Z})<\infty$. Indeed, let $\textrm{pd}_{\mathbb{Z}G}B(G,\mathbb{Z})=n<\infty$ and consider a $\mathbb{Z}G$-projective resolution $$0\rightarrow P_n \rightarrow P_{n-1}\rightarrow \cdots \rightarrow P_0 \rightarrow B(G,\mathbb{Z}) \rightarrow 0$$ of $B(G,\mathbb{Z})$. Since $B(G,\mathbb{Z})$ is $\mathbb{Z}$-free, the above exact sequence is $\mathbb{Z}$-split. Thus, we obtain an exact sequence of $kG$-modules $$0\rightarrow P_n\otimes_{\mathbb{Z}}k \rightarrow P_{n-1}\otimes_{\mathbb{Z}}k \rightarrow \cdots \rightarrow  P_0\otimes_{\mathbb{Z}}k \rightarrow B(G,\mathbb{Z})\otimes_{\mathbb{Z}}k =B(G,k) \rightarrow 0$$ which constitutes a $kG$-projective resolution of $B(G,k)$. Hence, $\textrm{pd}_{kG}B(G,k)\leq \textrm{pd}_{\mathbb{Z}G}B(G,\mathbb{Z})$. 
\end{Remark}

\begin{Lemma}\label{lem53}Let $k$ be a commutative ring and $G$ be a group of type $\Phi_k$ or an $\textsc{LH}\mathfrak{F}$ group of type FP$_{\infty}$. Then, the $kG$-module $B(G,k)$ is a weak characteristic module for $G$ over $k$.
\end{Lemma}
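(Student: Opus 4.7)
The plan is to verify directly that $B(G,k)$ satisfies the three conditions in Definition \ref{defi}. The $k$-flatness and the existence of a $k$-pure $kG$-monomorphism $\iota : k \to B(G,k)$ are already recorded in the discussion preceding the lemma: the module $B(G,k)$ is $k$-free, hence $k$-flat, and the canonical map $\iota$ is $kG$-linear and admits a $k$-linear splitting (by evaluation at any fixed element of $G$), which is stronger than $k$-purity. Thus the whole content of the lemma reduces to proving the finiteness of $\textrm{fd}_{kG}B(G,k)$, and this is where the two hypotheses on $G$ come in. I would handle the two cases separately.

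In the case where $G$ is of type $\Phi_k$, the argument is essentially a tautology once one recalls the second key property of $B(G,k)$ from the paragraph preceding the lemma, namely that $B(G,k)$ is $kH$-free for every finite subgroup $H\leq G$. This gives $\textrm{pd}_{kH} B(G,k) = 0 < \infty$ for every finite subgroup $H$ of $G$. By the very definition of the class $\Phi_k$, applied to the $kG$-module $M = B(G,k)$, we conclude that $\textrm{pd}_{kG} B(G,k) < \infty$, whence $\textrm{fd}_{kG} B(G,k) \leq \textrm{pd}_{kG} B(G,k) < \infty$, as required.

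In the case where $G$ is an $\textsc{LH}\mathfrak{F}$ group of type $\textrm{FP}_{\infty}$, the argument is substantially deeper and rests on Kropholler's hierarchy machinery. I would appeal to the external result \cite[Theorem 7.7]{Ste} (already invoked in Remark \ref{rem51} of this paper), which ensures that under these hypotheses $\textrm{fd}_{kG} B(G,k)$ is finite. The underlying mechanism is a transfinite induction along the $\textsc{H}\mathfrak{F}$-hierarchy, where the base case uses that $B(G,k)$ is $kH$-free for finite $H$, the induction step uses finite-dimensional contractible $G$-CW-complexes with stabilizers at earlier levels, and the FP$_{\infty}$ assumption on $G$ is what makes the gluing yield a finite global flat dimension rather than only a local one.

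The main obstacle is clearly the $\textsc{LH}\mathfrak{F}$ case; the $\Phi_k$ case is essentially immediate from the definition, but the finiteness of $\textrm{fd}_{kG} B(G,k)$ for $\textsc{LH}\mathfrak{F}$ groups of type $\textrm{FP}_{\infty}$ is a nontrivial fact whose proof is not carried out here. In writing up the proof I would make this division of labor transparent: give the $\Phi_k$ argument in one line, cite \cite[Theorem 7.7]{Ste} for the other case, and note that in both cases the $k$-flatness and the $k$-purity of $\iota$ are already established, so Definition \ref{defi} is fulfilled.
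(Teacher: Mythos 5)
Your division of labor is correct and your treatment of the $\Phi_k$ case is exactly what the paper does: $B(G,k)$ is $kH$-free for every finite $H\leq G$, so $\textrm{pd}_{kH}B(G,k)=0$ for all finite $H$, and the definition of type $\Phi_k$ applied to $M=B(G,k)$ gives $\textrm{pd}_{kG}B(G,k)<\infty$. You are also right that the only content of the lemma is the finiteness of $\textrm{fd}_{kG}B(G,k)$, since $k$-flatness of $B(G,k)$ and $k$-splitness of $\iota$ have already been established.

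The gap is in the $\textsc{LH}\mathfrak{F}$ case. You propose to cite \cite[Theorem 7.7]{Ste}, extrapolating from Remark \ref{rem51}, but that remark does not say what you need. It only asserts, \emph{under the extra hypothesis $\textrm{sfli}\,k<\infty$} and for $\textsc{LH}\mathfrak{F}$ groups, that $\textrm{fd}_{kG}B(G,k)<\infty$ is equivalent to the existence of a weak characteristic module; it is not a direct finiteness statement for groups of type $\textrm{FP}_\infty$, and it carries a Gorenstein finiteness assumption on $k$ that Lemma \ref{lem53} deliberately omits. Since the conclusion of Lemma \ref{lem53} is later fed into Theorem \ref{theo54} (where $\textrm{sfli}\,k<\infty$ is assumed), relying on a citation that itself needs $\textrm{sfli}\,k<\infty$ would also make Remark \ref{rem51} and Theorem \ref{theo54} circular. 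The paper instead invokes the Cornick--Kropholler result \cite[Corollary B.2(2)]{Kr2} (valid for $\textsc{LH}\mathfrak{F}$-groups) to obtain $\textrm{pd}_{\mathbb{Z}G}B(G,\mathbb{Z})<\infty$ over $\mathbb{Z}$, and then uses Remark \ref{rem52} — a base-change argument exploiting that the $\mathbb{Z}G$-projective resolution of the $\mathbb{Z}$-free module $B(G,\mathbb{Z})$ is $\mathbb{Z}$-split, so tensoring with $k$ stays exact — to conclude $\textrm{pd}_{kG}B(G,k)\leq \textrm{pd}_{\mathbb{Z}G}B(G,\mathbb{Z})<\infty$ for \emph{arbitrary} $k$. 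That detour through $\mathbb{Z}$ is the missing ingredient in your write-up; replacing your appeal to \cite[Theorem 7.7]{Ste} with \cite[Corollary B.2(2)]{Kr2} combined with Remark \ref{rem52} closes the gap.
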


\begin{proof}If $G$ is a group of type $\Phi_k$, it follows by the definitions that $\textrm{pd}_{kG}B(G,k)<\infty$. We consider now an $\textsc{LH}\mathfrak{F}$-group of type FP$_{\infty}$. Invoking \cite[Corollary B.2(2)]{Kr2}, which is also valid for $\textsc{LH}\mathfrak{F}$-groups, we infer that $\textrm{pd}_{\mathbb{Z}G} B(G,\mathbb{Z})<\infty$. Then, $\textrm{pd}_{kG}B(G,k)\leq \textrm{pd}_{\mathbb{Z}G}B(G,\mathbb{Z})< \infty$ by Remark \ref{rem52}. Therefore, in each case we have $\textrm{fd}_{kG}B(G,k)\leq\textrm{pd}_{kG}B(G,k)<\infty$ and hence $B(G,k)$ is a weak characteristic module for $G$ over $k$, by Remark \ref{rem51}.
\end{proof}

\begin{Theorem}\label{theo54}Let $k$ be a commutative ring such that $\textrm{sfli}k<\infty$ and $G$ be a group of type $\Phi_k$ or an $\textsc{LH}\mathfrak{F}$ group of type FP$_{\infty}$. Then:
	\begin{itemize}
		\item[(i)]A $kG$-module is Gorenstein projective if and only if it is PGF.
		\item[(ii)]A $kG$-module is Gorenstein flat if and only if its Pontryagin dual module is Gorenstein injective.
	\end{itemize}
\end{Theorem}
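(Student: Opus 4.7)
The plan is to reduce the theorem directly to the machinery developed in Section 3, since all the real work has been done in Lemma \ref{lem53}, Corollary \ref{corr44}, and Proposition \ref{prop45}. First, I would invoke Lemma \ref{lem53} to conclude that, under either hypothesis on $G$ (type $\Phi_k$ or $\textsc{LH}\mathfrak{F}$ of type FP$_\infty$), the module $B(G,k)$ is a weak characteristic module for $G$ over $k$. Combined with the standing assumption $\textrm{sfli}k<\infty$, this places the pair $(k,G)$ in exactly the setting required by the relevant results of Section 3.

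For part (i), I would first note that the inclusion ${\tt PGF}(kG)\subseteq{\tt GProj}(kG)$ is already available in complete generality from the preliminaries, so only the reverse inclusion requires justification. This is precisely the content of Corollary \ref{corr44}: under the present hypotheses the classes of Gorenstein projective, PGF, and Ding projective $kG$-modules all coincide. Part (ii) would then follow immediately from Proposition \ref{prop45}, which under the same hypotheses identifies the Gorenstein flat $kG$-modules as exactly those $M$ whose Pontryagin dual $DM$ is Gorenstein injective.

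I expect the main obstacle not to lie in the final assembly but in the preparatory verification of the weak characteristic module — and that verification is already encapsulated in Lemma \ref{lem53}. There the two geometric hypotheses on $G$ enter in genuinely different ways: for a group of type $\Phi_k$, the definition applied to $B(G,k)$ (which is free over each finite subgroup) directly yields $\textrm{pd}_{kG}B(G,k)<\infty$; for an $\textsc{LH}\mathfrak{F}$ group of type FP$_\infty$, one must instead appeal to Kropholler's bound $\textrm{pd}_{\mathbb{Z}G}B(G,\mathbb{Z})<\infty$ and transport this finiteness from $\mathbb{Z}$ to $k$ via Remark \ref{rem52}. Both strands are already handled, so the proof of Theorem \ref{theo54} itself is a three-line citation: apply Lemma \ref{lem53}, then Corollary \ref{corr44} for (i), then Proposition \ref{prop45} for (ii).
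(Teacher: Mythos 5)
Your proof is correct and follows essentially the same route as the paper: both proceed through Lemma \ref{lem53} to obtain the weak characteristic module $B(G,k)$, and then invoke Section~3 machinery and Proposition \ref{prop45}. The only cosmetic difference is that you cite Corollary \ref{corr44} for part (i) whereas the paper cites Theorem \ref{theo310} directly; since Corollary \ref{corr44} is an immediate restatement of what Theorem \ref{theo310}'s proof establishes, this is the same argument.
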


\begin{proof}Since $\textrm{sfli}k<\infty$ and $B(G,k)$ is a weak characteristic module for $G$ over $k$ (see Lemma \ref{lem53}), the result follows from Theorem \ref{theo310} and Corollary \ref{prop45}.
\end{proof}

\begin{Corollary}Let $k$ be a commutative ring such that $\textrm{sfli}k<\infty$ and $G$ be a group of type $\Phi_k$ or an $\textsc{LH}\mathfrak{F}$ group of type FP$_{\infty}$. Then, for every $kG$-module $M$ we have:
	\begin{itemize}
		\item[(i)]$\textrm{Gfd}_{kG}M\leq \textrm{PGF-dim}_{kG}M=\textrm{Gpd}_{kG}M$ and
		\item[(ii)]$\textrm{Gfd}_{kG}M=\textrm{Gid}_{kG}DM$.
	\end{itemize}
\end{Corollary}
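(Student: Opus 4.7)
The plan is to reduce the statement to results already established in the paper by verifying that the hypotheses put us in the setting where a weak characteristic module for $G$ over $k$ exists. Specifically, Lemma \ref{lem53} already tells us that, for $G$ of type $\Phi_k$ or an $\textsc{LH}\mathfrak{F}$-group of type FP$_{\infty}$, the module $B(G,k)$ serves as a weak characteristic module for $G$ over $k$. Combined with the standing assumption $\textrm{sfli}k<\infty$, this places us squarely in the setting of Corollary \ref{corr44}, Theorem \ref{theo310}, and the corollary following Proposition \ref{prop45}, and (i) and (ii) follow essentially as one-line deductions.

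For part (i), I would first invoke Corollary \ref{corr44} to conclude that the classes ${\tt PGF}(kG)$, ${\tt DP}(kG)$ and ${\tt GProj}(kG)$ all coincide; in particular, any PGF-resolution of $M$ is a Gorenstein projective resolution and conversely, yielding the equality $\textrm{PGF-dim}_{kG}M=\textrm{Gpd}_{kG}M$. For the inequality $\textrm{Gfd}_{kG}M\leq \textrm{PGF-dim}_{kG}M$, I would use the (general) inclusion ${\tt PGF}(kG)\subseteq {\tt GFlat}(kG)$ noted in the preliminaries, so any PGF-resolution of $M$ is automatically a Gorenstein flat resolution of the same length. Alternatively, the final sentence of Theorem \ref{theo310} directly gives $\textrm{Gfd}_{kG}M\leq\textrm{Gpd}_{kG}M$, which, combined with the equality just established, completes (i).

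For part (ii), I would simply quote the corollary immediately following Proposition \ref{prop45}, which states that under $\textrm{sfli}k<\infty$ and the existence of a weak characteristic module for $G$ over $k$ — both of which hold by hypothesis and Lemma \ref{lem53} — one has $\textrm{Gfd}_{kG}M=\textrm{Gid}_{kG}DM$ for every $kG$-module $M$.

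There is no real obstacle here: the heavy lifting was done in Section 3 (the finiteness criteria of Theorem \ref{theo310} together with the cotorsion-theoretic arguments yielding Corollary \ref{corr44} and Proposition \ref{prop45}) and in the recognition, via Lemma \ref{lem53}, that $B(G,k)$ provides a weak characteristic module under either of the two hypotheses. The only point requiring care is a bookkeeping one, namely to make clear that each inequality and equality claimed is invoked from a previously proved result whose hypotheses are satisfied; no additional argument beyond these citations is needed.
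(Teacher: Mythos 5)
Your proposal is correct and mirrors the natural route the paper intends: Lemma \ref{lem53} shows $B(G,k)$ is a weak characteristic module under either hypothesis, after which (i) follows from Corollary \ref{corr44} (equating the PGF and Gorenstein projective classes, hence the corresponding dimensions) together with the inclusion ${\tt PGF}(kG)\subseteq{\tt GFlat}(kG)$ or the last line of Theorem \ref{theo310}, and (ii) is exactly the corollary following Proposition \ref{prop45}. The paper gives no separate proof for this corollary precisely because it is this chain of citations, so your bookkeeping is the proof.
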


\begin{Corollary}\label{corr46}Let $k$ be a commutative ring such that $\textrm{sfli}k<\infty$ and $G$ be a group of type $\Phi_k$ or an $\textsc{LH}\mathfrak{F}$ group of type FP$_{\infty}$. Then, every flat Gorenstein projective $kG$-module is projective.
\end{Corollary}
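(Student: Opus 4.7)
The plan is to reduce this statement directly to the earlier corollary asserting that, over a commutative ring $k$ with $\textrm{sfli}k<\infty$ and for any group $G$ admitting a weak characteristic module over $k$, every flat Gorenstein projective $kG$-module is projective. Under the hypotheses of Corollary \ref{corr46}, Lemma \ref{lem53} already guarantees that $B(G,k)$ serves as a weak characteristic module for $G$ over $k$, so the hypotheses of the earlier corollary are fulfilled, and the conclusion is immediate.

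To be more explicit about the underlying mechanism (in case a direct quotation is not preferred), I would proceed as follows. First, invoke Lemma \ref{lem53} to produce a weak characteristic module for $G$ over $k$. Second, apply Theorem \ref{theo54}(i) (equivalently, Corollary \ref{corr44}) to conclude that every Gorenstein projective $kG$-module is in fact PGF; in particular, if $M$ is flat and Gorenstein projective, then $M$ is a PGF $kG$-module with $\textrm{fd}_{kG}M=0<\infty$. Third, apply Lemma \ref{lem41} to the PGF module $M$ of finite flat dimension to conclude $M$ is $kG$-projective.

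There is essentially no obstacle here: the work has been done in the earlier results, and Corollary \ref{corr46} is just the specialization of the general statement (the corollary right after Corollary \ref{corr44}) to the two families of groups discussed in this section, using Lemma \ref{lem53} to verify the existence of a weak characteristic module. The entire proof can therefore be written in one or two lines, simply citing Lemma \ref{lem53} together with Corollary \ref{corr44} and Lemma \ref{lem41}.
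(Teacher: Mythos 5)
Your argument is exactly the paper's: the paper deduces the corollary from Lemma \ref{lem41} together with Theorem \ref{theo54}, which is precisely the chain you spell out (Lemma \ref{lem53} being what underlies Theorem \ref{theo54}). Correct and same approach.
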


\begin{proof}This follows immediately from Lemma \ref{lem41} and Theorem \ref{theo54}.
\end{proof}

\begin{Remark}\rm In the case where $k$ is a commutative ring of finite Gorenstein weak global dimension and $G$ is a group of type $\Phi_k$ or an $\textsc{LH}\mathfrak{F}$ group of type FP$_{\infty}$, Corollary \ref{corr46} provides a positive answer to \cite[Question 3.8]{Baz}.
\end{Remark}

\section{Some invariants of groups}The goal of this section is to study the relationship between the generalized homological dimension of groups and the Gorenstein homological dimension of groups over any commutative ring of finite Gorenstein weak global dimension. We deduce that the Gorenstein homological dimension ${\textrm{Ghd}}_{\mathbb{Z}}G$ of a group $G$ vanishes if and only if $G$ is locally finite. Moreover, we show that the generalized cohomological dimension bounds the generalized homological dimension over any commutative $\aleph_0$-Noetherian ring of finite global dimension. Finally, we provide a Gorenstein homological analogue of Serre's theorem \cite[VIII, Theorem 3.1]{Br}.

Ikenaga \cite{In} defined the generalized homological dimension $\underline{\textrm{hd}}(G)$ of a group $G$ over $\mathbb{Z}$ as $\underline{\textrm{hd}}(G)=\textrm{sup}\{i: \textrm{Tor}_{i}^{\mathbb{Z}G}(I,M)\neq 0, \, M \,  \mathbb{Z}\textrm{-torsion free},\,I \, \textrm{injective} \}$. Similarly, we may define the generalized homological dimension $\underline{\textrm{hd}}_k(G)$ of a group $G$ over a commutative ring $k$ as $$\underline{\textrm{hd}}_k(G)=\textrm{sup}\{i: \textrm{Tor}_{i}^{kG}(I,M)\neq 0, \, M \,  k\textrm{-flat},\,I \, \textrm{injective} \}.$$ We note that $\underline{\textrm{hd}}(G)=\underline{\textrm{hd}}_{\mathbb{Z}}(G)$. Emmanouil and Talelli \cite{ET2} defined the dimension $\underline{\textrm{h}\textrm{d}}'(G)$ of a group $G$ over $\mathbb{Z}$ as $\underline{\textrm{h}\textrm{d}}'(G)=\textrm{sup}\{i: \textrm{Tor}_{i}^{\mathbb{Z}G}(I,M)\neq 0, \, M \,  \mathbb{Z}\textrm{-free},\,I \, \textrm{injective} \}$ and proved that $\underline{\textrm{h}\textrm{d}}'(G)\leq \underline{\textrm{cd}}(G)$, where $\underline{\textrm{cd}}(G)=\textrm{sup}\{i: \textrm{Ext}^{i}_{\mathbb{Z}G}(M,F)\neq 0, \, M \,  \mathbb{Z}\textrm{-free},\,F\, \textrm{free}\}$ is the generalized cohomological dimension of a group $G$ over $\mathbb{Z}$ (see \cite{In}). Similarly, we may define the dimension $\underline{\textrm{h}\textrm{d}}'_k(G)$ of a group $G$ over a commutative ring $k$ as $$\underline{\textrm{h}\textrm{d}}'_k(G)=\textrm{sup}\{i: \textrm{Tor}_{i}^{kG}(I,M)\neq 0, \, M \,  k\textrm{-projective},\,I \, \textrm{injective} \}.$$ We note that $\underline{\textrm{h}\textrm{d}}'(G)=\underline{\textrm{h}\textrm{d}}'_{\mathbb{Z}}(G)$ and $\underline{\textrm{h}\textrm{d}}'_k(G)\leq \underline{\textrm{h}\textrm{d}}_k(G)$ over any commutative ring $k$.

%\medskip
%The following result was firstly stated in \cite[Proposition 4.6]{Ren1}, but we provide a different proof.

\begin{Proposition}\label{prop316} Let $k$ be a commutative ring and $G$ be a group such that ${\textrm{Ghd}}_{k}G<\infty$. Then, $\textrm{Ghd}_{k}G\leq\underline{\textrm{h}\textrm{d}}\;'_k(G)\leq \underline{\textrm{hd}}_k (G)$. Moreover, if $\textrm{sfli}k<\infty$, then  ${\textrm{Ghd}}_{k}G=\underline{\textrm{h}\textrm{d}}\;'_k(G)=\underline{\textrm{hd}}_k (G)$.
\end{Proposition}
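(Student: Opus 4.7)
The plan is to exploit the short exact sequence from Proposition \ref{Gfd} together with the dimension bound in Proposition \ref{propara} to translate between Gorenstein flat dimension and Tor vanishing against injective coefficients.

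For the inequality $\textrm{Ghd}_k G \leq \underline{\textrm{hd}}\,'_k(G)$, set $n:=\textrm{Ghd}_k G = \textrm{Gfd}_{kG}k$ and apply Proposition \ref{Gfd} to the trivial $kG$-module $k$ to obtain a short exact sequence $0\to k\to A\to G'\to 0$ with $\textrm{fd}_{kG}A = n$ and $G'$ a PGF $kG$-module. Since every PGF module is Gorenstein flat, $\textrm{Tor}_i^{kG}(I,G')=0$ for every injective $kG$-module $I$ and every $i\geq 1$ (see \cite[Lemma 2.4]{Bennis}), so the long exact sequence of Tor yields isomorphisms $\textrm{Tor}_i^{kG}(I,k)\cong \textrm{Tor}_i^{kG}(I,A)$ for all $i\geq 1$. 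Since $\textrm{fd}_{kG}A=n$, the functor $\textrm{Tor}_n^{kG}(-,A)$ is nonzero and left exact (as $\textrm{Tor}_{n+1}^{kG}(-,A)=0$), so there exists a $kG$-module $N$ with $\textrm{Tor}_n^{kG}(N,A)\neq 0$; embedding $N$ into an injective $kG$-module $I$, left exactness produces an injection $\textrm{Tor}_n^{kG}(N,A)\hookrightarrow \textrm{Tor}_n^{kG}(I,A)$, whence $\textrm{Tor}_n^{kG}(I,k)\cong \textrm{Tor}_n^{kG}(I,A)\neq 0$. The trivial $kG$-module $k$ is $k$-free and hence $k$-projective, so this non-vanishing forces $\underline{\textrm{hd}}\,'_k(G)\geq n$. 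The inequality $\underline{\textrm{hd}}\,'_k(G)\leq \underline{\textrm{hd}}_k(G)$ is immediate, because every $k$-projective module is $k$-flat.

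For the ``moreover'' clause, assume in addition that $\textrm{sfli}\,k<\infty$; it suffices to prove $\underline{\textrm{hd}}_k(G)\leq \textrm{Ghd}_k G$. By Corollary \ref{cor38}, there exists a weak characteristic module $A$ for $G$ over $k$ with $\textrm{fd}_{kG}A=\textrm{Ghd}_k G$, and Proposition \ref{propara} then gives $\textrm{Gfd}_{kG}M\leq \textrm{Ghd}_k G$ for every $k$-flat $kG$-module $M$. For any such $M$, a second application of Proposition \ref{Gfd} (producing a short exact sequence $0\to M\to B\to H\to 0$ with $\textrm{fd}_{kG}B\leq \textrm{Ghd}_k G$ and $H$ a PGF $kG$-module) and the same long exact sequence argument as above shows that $\textrm{Tor}_i^{kG}(I,M)=0$ whenever $i>\textrm{Ghd}_k G$ and $I$ is an injective $kG$-module. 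Taking the supremum over all $k$-flat $M$ and all injective $I$ yields $\underline{\textrm{hd}}_k(G)\leq \textrm{Ghd}_k G$, which combines with the first part to give the claimed equalities.

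I expect the main subtlety to lie in the ``detection'' step used for the first inequality, since the definition of $\underline{\textrm{hd}}\,'_k(G)$ restricts the first argument of Tor to injective modules, and one must promote a general witness $N$ of non-vanishing to an injective one. The decisive point is that $\textrm{fd}_{kG}A$ is \emph{exactly} $n$, so $\textrm{Tor}_n^{kG}(-,A)$ is left exact and transports the non-vanishing along the embedding $N\hookrightarrow I$. Everything else reduces to routine homological bookkeeping based on Propositions \ref{Gfd} and \ref{propara} and on the Tor-vanishing of Gorenstein flat modules against injectives.
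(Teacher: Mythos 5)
Your proof is correct and follows essentially the same route as the paper: both arguments rest on characterizing finite Gorenstein flat dimension over $kG$ via vanishing of $\textrm{Tor}$ against injective coefficients (the paper cites this from Theorem 2.8 of Bennis, whereas you re-derive it from Proposition \ref{Gfd}, using a PGF cokernel plus a detection step via an injective embedding), and then both apply Corollary \ref{cor38} and Proposition \ref{propara} to bound $\textrm{Gfd}_{kG}M$ for $k$-flat $M$ and obtain the reverse inequality when $\textrm{sfli}\,k<\infty$. The extra detail you spell out is sound but standard, so the difference is one of exposition rather than strategy.
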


\begin{proof}Since $\textrm{Ghd}_{k}G=\textrm{Gfd}_{kG}k$ is finite and every ring is GF-closed (see \cite[Corollary 4.12]{S-S}), it follows from \cite[Theorem 2.8]{Bennis} that $$\textrm{Ghd}_{k}G=\textrm{sup}\{i:\,\textrm{Tor}_i^{kG}(I,k)\neq 0,\,I \,kG\textrm{-injective}\}.$$ It is now clear that $\textrm{Ghd}_{k}G\leq\underline{\textrm{h}\textrm{d}}'_k(G)\leq \underline{\textrm{hd}}_k (G)$. Moreover, if $\textrm{sfli}k$ is finite, Theorem \ref{theo310} implies that $\textrm{Gfd}_{kG}M<\infty$ for every $kG$-module $M$ and hence \cite[Theorem 2.8]{Bennis} yields $\textrm{Gfd}_{kG}M =\textrm{sup}\{i:\,\textrm{Tor}_i^{kG}(I,M)\neq 0,\,I \,kG\textrm{-injective}\}$. As $\textrm{sfli}k<\infty$ and ${\textrm{Ghd}}_{k}G<\infty$, it follows from Theorem \ref{theo310} that there exists a weak characteristic module $A$ for $G$ such that $\textrm{Ghd}_{k}G=\textrm{fd}_{kG}A$ (see Corollary \ref{cor38}). Then, Proposition \ref{propara} implies that $\textrm{Gfd}_{kG}M \leq \textrm{fd}_{kG}A = \textrm{Ghd}_{k}G$ for every $k$-flat $kG$-module $M$. Consequently, we have $\underline{\textrm{hd}}_k (G)=\textrm{sup}\{\textrm{Gfd}_{kG}M:\, M \, k\textrm{-flat}\}$ $\leq \textrm{Ghd}_{k}G$ and hence $\textrm{Ghd}_k G=\underline{\textrm{hd}}_k (G)$. We conclude that ${\textrm{Ghd}}_{k}G=\underline{\textrm{h}\textrm{d}}'_k(G)=\underline{\textrm{hd}}_k (G)$, as needed.
\end{proof}

\begin{Remark} \rm Proposition \ref{prop316} can also be obtained from \cite[Proposition 4.7]{Ren1} and the definition of $\underline{\textrm{h}\textrm{d}}'_k(G)$. Here, we have offered an alternative proof.
\end{Remark}

Emmanouil and Talelli \cite{ET} proved that over any commutative ring $k$ and any group $G$, the Gorenstein cohomological dimension ${\textrm{Gcd}}_kG$ vanishes if and only if $G$ is a finite group. The following corollary shows that this is not true for the Gorenstein homological dimension.

\begin{Corollary}\label{theo62}Let $G$ be a group. The following conditions are equivalent:
	\begin{itemize}
		\item[(i)] $G$ is locally finite.
		\item[(ii)]$\textrm{Ghd}_k G=0$ for every commutative ring $k$.
		\item[(iii)]$\textrm{Ghd}_{\mathbb{Z}} G=0$.
	\end{itemize}
\end{Corollary}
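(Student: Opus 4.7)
The plan is to establish the cyclic implications $(i) \Rightarrow (ii) \Rightarrow (iii) \Rightarrow (i)$. The implication $(ii) \Rightarrow (iii)$ is immediate upon specializing to $k = \mathbb{Z}$.

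For $(iii) \Rightarrow (i)$, I would apply Proposition \ref{prop316} with $k = \mathbb{Z}$: since $\textrm{sfli}\mathbb{Z} = 1 < \infty$ and $\textrm{Ghd}_{\mathbb{Z}}G = 0$ is finite, that proposition yields $\underline{\textrm{hd}}(G) = \underline{\textrm{hd}}_{\mathbb{Z}}(G) = 0$. Ikenaga's original characterization in \cite{In} identifies the groups of vanishing generalized homological dimension over $\mathbb{Z}$ as precisely the locally finite ones, so $G$ is locally finite.

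For $(i) \Rightarrow (ii)$, fix a commutative ring $k$ and write $G = \bigcup_i H_i$ as a directed union of finite subgroups. Since the colimit of the directed system of transitive $G$-sets $\{G/H_i\}$ is a one-point set, the trivial $kG$-module admits the description $k = \varinjlim_i k[G/H_i] = \varinjlim_i \textrm{Ind}_{H_i}^G k$. Granting the sub-claim that the trivial $kH$-module $k$ is Gorenstein flat over $kH$ for every finite group $H$, Lemma \ref{lem25}(i) makes each $\textrm{Ind}_{H_i}^G k$ Gorenstein flat over $kG$; the class of Gorenstein flat $kG$-modules is closed under direct limits (\cite[Corollary 4.12]{S-S}), so $k$ is Gorenstein flat over $kG$, yielding $\textrm{Ghd}_k G = 0$.

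The main obstacle is the sub-claim. For a finite group $H$ and a commutative ring $k$, \cite[Corollary 2.3]{ET} tells us that the trivial $kH$-module $k$ is Gorenstein projective, and a complete projective resolution $\mathbf{P}$ of $k$ by finitely generated free $kH$-modules can be produced by splicing a finitely generated free $kH$-resolution of $k$ with its $k$-linear dual via the Frobenius isomorphism $kH \cong \textrm{Hom}_k(kH,k)$ of $(kH,kH)$-bimodules. To upgrade $\mathbf{P}$ to a complete flat resolution, I would verify that $I \otimes_{kH} \mathbf{P}$ is acyclic for every injective right $kH$-module $I$, which amounts to the Tate-cohomological triviality of $I$ over $H$. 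Every injective $kH$-module is a direct summand of a product of copies of the injective cogenerator $\textrm{Hom}_{\mathbb{Z}}(kH,\mathbb{Q}/\mathbb{Z})$, which for finite $H$ is isomorphic to $\textrm{Ind}_{\{1\}}^H(\mathbb{Q}/\mathbb{Z})$ by Frobenius reciprocity and is therefore Tate-cohomologically trivial by the Tate form of Shapiro's lemma; since Tate cohomological triviality passes to products and direct summands for finite $H$, the required acyclicity follows, establishing the sub-claim.
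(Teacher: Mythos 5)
Your cyclic decomposition $(i)\Rightarrow(ii)\Rightarrow(iii)\Rightarrow(i)$ matches the paper's, and the implication $(iii)\Rightarrow(i)$ follows the same path: use Proposition~\ref{prop316} (with $\textrm{sfli}\,\mathbb{Z}<\infty$) to pass from $\textrm{Ghd}_{\mathbb{Z}}G=0$ to $\underline{\textrm{hd}}(G)=0$, then invoke the homological characterization of locally finite groups. However, your attribution of that characterization to Ikenaga's \cite{In} is off: Ikenaga showed the easy direction (locally finite $\Rightarrow \underline{\textrm{hd}}(G)=0$) but the converse, which is precisely what $(iii)\Rightarrow(i)$ needs, is Emmanouil's \cite[Theorem 1]{Em2}, and that is what the paper cites. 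The math is still valid with the corrected citation.

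For $(i)\Rightarrow(ii)$ you depart from the paper in an interesting way. The paper simply invokes \cite[Remark 3.6]{St}, while you give a self-contained argument: realize $k=\varinjlim_i \textrm{Ind}_{H_i}^G k$ over the directed system of finite subgroups, reduce via Lemma~\ref{lem25}(i) and closure of ${\tt GFlat}(kG)$ under direct limits to the claim that the trivial module is Gorenstein flat over $kH$ for finite $H$, and then prove that claim by upgrading a complete projective resolution of $k$ by finitely generated free $kH$-modules to a complete flat resolution. Your Tate-triviality argument for the upgrade is correct (induced modules have vanishing Tate homology by Shapiro, products and summands of such are again trivial because the resolution terms are finitely presented), but it is heavier than it needs to be: once $\mathbf{P}$ consists of finitely generated projectives, one has a natural isomorphism $I\otimes_{kH}\mathbf{P}\cong \textrm{Hom}_{kH}(\textrm{Hom}_{kH}(\mathbf{P},kH),I)$, and since $\textrm{Hom}_{kH}(\mathbf{P},kH)$ is acyclic (being a $\textrm{Hom}$ into a projective module applied to a complete projective resolution) and $I$ is injective, $I\otimes_{kH}\mathbf{P}$ is acyclic without any appeal to Tate cohomology. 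Either way, your route is more elementary and self-contained than the paper's external citation, which is a genuine pedagogical gain; the cost is length and a small slip in the description of the injective cogenerator, where $\textrm{Hom}_{\mathbb{Z}}(kH,\mathbb{Q}/\mathbb{Z})$ should be identified as induced from $\textrm{Hom}_{\mathbb{Z}}(k,\mathbb{Q}/\mathbb{Z})$, not from $\mathbb{Q}/\mathbb{Z}$ itself, though this does not affect the conclusion.
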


\begin{proof}The implication $(i)\Rightarrow (ii)$ follows from \cite[Remark 3.6]{St}, while the implication $(ii)\Rightarrow (iii)$ is trivial. Assume now that $\textrm{Ghd}_{\mathbb{Z}}G=0$. Then, Proposition \ref{prop316} implies that $\underline{\textrm{hd}}(G)=0$ as well. Therefore, invoking \cite[Theorem 1]{Em2}, we infer that $G$ is locally finite. We have proved the implication $(iii)\Rightarrow (i)$.
\end{proof}

Another consequence of Proposition \ref{prop316} is given in the following result.

\begin{Corollary}\label{cor314}Let $k$ be a commutative ring and $G$ be a group of finite Gorenstein homological dimension. Then, $$\underline{\textrm{hd}}_k (G) \leq \textrm{sfli}(kG)\leq \underline{\textrm{hd}}_k (G) + \textrm{sfli}k.$$
\end{Corollary}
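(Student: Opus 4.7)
The plan is to prove the two inequalities separately, with the lower bound following from a direct unwinding of the definitions and the upper bound reducing (via earlier results in the section) to Propositions \ref{propdion} and \ref{prop316}.

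For the lower inequality $\underline{\textrm{hd}}_k(G)\le \textrm{sfli}(kG)$, I would argue as follows. By definition, $\textrm{sfli}(kG)$ is the supremum of $\textrm{fd}_{kG}I$ over all injective $kG$-modules $I$; hence for every injective $kG$-module $I$ and every index $i>\textrm{sfli}(kG)$, the flat dimension of $I$ forces $\textrm{Tor}_i^{kG}(I,M)=0$ for every $kG$-module $M$, and in particular for every $k$-flat $kG$-module $M$. Comparing with the definition of $\underline{\textrm{hd}}_k(G)$ one reads off $\underline{\textrm{hd}}_k(G)\le \textrm{sfli}(kG)$. Note this half does not even use the finiteness hypothesis on $\textrm{Ghd}_kG$.

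For the upper inequality $\textrm{sfli}(kG)\le \underline{\textrm{hd}}_k(G)+\textrm{sfli}k$, I would split into two cases. If $\textrm{sfli}k=\infty$ the bound is trivial, so assume $\textrm{sfli}k<\infty$. Under this assumption, together with the standing hypothesis $\textrm{Ghd}_kG<\infty$, Proposition \ref{prop316} gives the equality $\textrm{Ghd}_kG=\underline{\textrm{hd}}_k(G)$. Substituting into the bound of Proposition \ref{propdion}, namely $\textrm{sfli}(kG)\le \textrm{Ghd}_kG+\textrm{sfli}k$, yields the desired inequality $\textrm{sfli}(kG)\le \underline{\textrm{hd}}_k(G)+\textrm{sfli}k$.

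There is no real obstacle here; the content of the corollary is essentially an assembly of Propositions \ref{propdion} and \ref{prop316}, with the only subtlety being the case split on whether $\textrm{sfli}k$ is finite (so that Proposition \ref{prop316} may be invoked in the sharpened form giving the equality $\textrm{Ghd}_kG=\underline{\textrm{hd}}_k(G)$). I would write the proof in one short paragraph treating both inequalities in turn.
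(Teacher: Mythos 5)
Your proposal is correct and follows essentially the same route as the paper: the lower bound is read off directly from the definitions (the paper phrases this contrapositively but it is the same argument), and the upper bound reduces to Proposition \ref{prop316} (using $\textrm{Ghd}_kG<\infty$ and $\textrm{sfli}k<\infty$ to get $\textrm{Ghd}_kG=\underline{\textrm{hd}}_k(G)$) combined with Proposition \ref{propdion}. Your side-remark that the lower inequality does not use the finiteness of $\textrm{Ghd}_kG$ matches the paper's own Remark \ref{rem315}(i).
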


\begin{proof}For the first inequality, we consider a $kG$-module $M$, an injective $kG$-module $I$ and a nonnegative integer $s$ such that $\textrm{Tor}_s^{kG}(I,M)\neq 0$. Then, $\textrm{fd}_{kG}I\geq s$ and hence $s\leq \textrm{sfli}(kG)$. Consequently, the definition of $\underline{\textrm{hd}}_k (G)$ implies that $\underline{\textrm{hd}}_k (G) \leq \textrm{sfli}(kG)$. 
	
	For the second inequality, we assume that $\underline{\textrm{hd}}_k (G)<\infty$ and $\textrm{sfli}k<\infty$. Then, Proposition \ref{prop316} yields $\textrm{Ghd}_k G=\underline{\textrm{hd}}_k (G)$ and hence the inequality follows from Proposition \ref{propdion}.
\end{proof}

\begin{Remark}\label{rem315}\rm (i) We note that the inequality $\underline{\textrm{hd}}_k (G) \leq \textrm{sfli}(kG)$ in Corollary \ref{cor314} holds for every commutative ring $k$ and every group $G$.

(ii) The inequality in Corollary \ref{cor314} also holds if we replace the condition of the finiteness of the Gorenstein homological dimension of the group $G$ with any of the conditions in Theorem \ref{theo310}.
\end{Remark}

%Let $G$ be a group and $\mu$ be a limit ordinal such that there exists an ascending filtration of $G$ by subgroups $G_{\lambda}$ indexed by ordinals $\lambda \leq \mu$, where $G_{\mu}=G$ and $G_{\lambda}=\bigcup_{\kappa < \lambda} G_{\kappa}$ if $\lambda\leq \mu$ is a limit ordinal. We refer to the ascending chain of subgroups $(G_{\lambda})_{\lambda\leq \mu}$ as an exhaustive continuous ascending filtration of $G$.
%\begin{Corollary}Let $k$ be a commutative ring and consider a group $G$, a limit ordinal $\mu$ and an exhaustive continuous ascending filtration of $G$ by subgroups $(G_{\lambda})_{\lambda \leq \mu}$. Then, $$\textrm{Ghd}_k (G) \leq \textrm{sup}_{\lambda<\mu}\textrm{Ghd}_k (G_{\lambda}).$$\end{Corollary}

%\begin{proof}It suffices to assume that $\textrm{sup}_{\lambda<\mu}\textrm{Ghd}_k (G_{\lambda})=n<\infty$. In that case, we have $\textrm{Ghd}_k (G_{\lambda})\leq n$, for every $\lambda<\mu$.\end{proof}

Similarly, we define $\underline{\textrm{cd}}_k(G)=\textrm{sup}\{i: \textrm{Ext}^{i}_{kG}(M,P)\neq 0, \, M \,  k\textrm{-projective},\,P\, \textrm{projective}\}$ and note that $\underline{\textrm{cd}}_\mathbb{Z}(G)=\underline{\textrm{cd}}(G)$ is the generalized cohomological dimension of a group $G$ over $\mathbb{Z}$ defined in \cite{In}.

\begin{Proposition}\label{propp65}Let $k$ be a commutative ring and $G$ be a group. Then, $$\underline{\textrm{cd}}_k (G)\leq \textrm{silp}(kG) \leq \underline{\textrm{cd}}_k (G) +\textrm{gldim}k.$$
\end{Proposition}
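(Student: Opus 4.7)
The plan is to establish the two inequalities separately. The lower bound is essentially a definitional matter, while the upper bound follows from a standard dimension shift together with a syzygy computation over $k$.

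For the inequality $\underline{\textrm{cd}}_k G \leq \textrm{silp}(kG)$, I would argue directly from the definition: if $i$ is realized by some $k$-projective $kG$-module $M$ and projective $kG$-module $P$ with $\textrm{Ext}^i_{kG}(M,P) \neq 0$, then $\textrm{id}_{kG} P \geq i$, and therefore $\textrm{silp}(kG) \geq i$. Taking the supremum over all admissible $i$ yields the claim. This part requires no work beyond unraveling definitions.

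For the upper bound $\textrm{silp}(kG) \leq \underline{\textrm{cd}}_k G + \textrm{gldim}\, k$, I would assume both $n = \underline{\textrm{cd}}_k G$ and $m = \textrm{gldim}\, k$ are finite, since otherwise the bound is trivial. The goal reduces to showing $\textrm{Ext}^j_{kG}(N, P) = 0$ whenever $P$ is a projective $kG$-module, $N$ is an arbitrary $kG$-module, and $j > n + m$. To do so, I would choose a projective $kG$-resolution $\cdots \to Q_1 \to Q_0 \to N \to 0$ and let $K$ denote its $m$-th syzygy. Since $kG$ is $k$-free, each $Q_i$ is $k$-projective; combined with $\textrm{pd}_k N \leq \textrm{gldim}\, k = m$, the long exact sequence argument shows that $K$ is $k$-projective.

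Dimension shifting then yields $\textrm{Ext}^j_{kG}(N, P) \cong \textrm{Ext}^{j-m}_{kG}(K, P)$ for $j > m$. When $j > n + m$, the right-hand exponent exceeds $n$, and since $K$ is $k$-projective while $P$ is $kG$-projective, the very definition of $\underline{\textrm{cd}}_k G$ forces this Ext group to vanish. Hence $\textrm{id}_{kG} P \leq n + m$, and taking the supremum over $P$ gives the desired inequality. The argument is short and there is no real obstacle; the only delicate point is verifying that $k$-projectivity of the syzygy passes from the $Q_i$ to $K$, which is a routine consequence of the finiteness of $\textrm{gldim}\, k$ together with the $k$-freeness of $kG$.
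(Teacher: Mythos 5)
Your proof is correct and follows essentially the same route as the paper: the lower bound is handled by unravelling the definitions (the paper simply cites a lemma of Stergiopoulou for this, but the content is what you wrote), and the upper bound is obtained exactly as in the paper by taking the $\textrm{gldim}\,k$-th syzygy of an arbitrary $kG$-module along a $kG$-projective resolution, noting it is $k$-projective since the projective $kG$-modules are $k$-projective and $\textrm{gldim}\,k$ is finite, and then dimension shifting against a projective $kG$-module $P$ to bound $\textrm{id}_{kG}P$.
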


\begin{proof}The first inequality follows from \cite[Lemma 2.16]{St}. For the second inequality, it suffices to assume that $\underline{\textrm{cd}}_k (G)=m$ and $\textrm{gldim}k=n$ are finite. Let $N$ be a $kG$-module and consider a $kG$-projective resolution $$\textbf{P}=\cdots \rightarrow P_n \rightarrow \cdots \rightarrow P_1 \rightarrow P_0 \rightarrow N \rightarrow 0 $$ of $N$. Since $\textrm{gldim}k=n$, viewing \textbf{P} as a $k$-projective resolution of $N$, we obtain an exact sequence of $kG$-modules $$0\rightarrow M \rightarrow P_{n-1}\rightarrow \cdots \rightarrow P_1 \rightarrow P_0 \rightarrow N \rightarrow 0,$$ where the $kG$-modules $P_i$ are projective for every $i=0,1,\dots,n-1$ and $M$ is $k$-projective. Let $P$ be a $kG$-projective module. Then, the hypothesis $\underline{\textrm{cd}}_k (G)=m$ implies that $\textrm{Ext}^{i}_{kG}(M,P)=0$ for every $i>m$. Hence $\textrm{Ext}^i_{kG}(N,P)\cong \textrm{Ext}^{i-n}_{kG}(M,P)=0$ for every $i>m+n$. We conclude that $\textrm{id}_{kG}P\leq n+m$ for every $kG$-projective module $P$. Hence $\textrm{silp}(kG)\leq m+n$, as needed.  
\end{proof}

We recall that a left (resp. right) $\aleph_0$-Noetherian ring is a ring all of whose left (resp. right) ideals are countably generated. Countable rings and countably generated algebras over fields are examples of rings which are both left and right $\aleph_0$-Noetherian.

\begin{Proposition}\label{propp66}Let $k$ be a commutative $\aleph_0$-Noetherian ring and $G$ be a group. Then, $\textrm{silp}(kG)=\textrm{spli}(kG)$.
\end{Proposition}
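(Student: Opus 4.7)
The plan is to combine the Emmanouil--Talelli identity $\textrm{silp}\,R = \textrm{silf}\,R$ (valid for any ring $R$, see \cite[Proposition 2.1]{ET2}) with Jensen's classical theorem that over an $\aleph_0$-Noetherian commutative ring $k$, every flat $k$-module has projective dimension at most one. With these in hand, it suffices to prove the two inequalities $\textrm{silp}(kG) \leq \textrm{spli}(kG)$ and $\textrm{spli}(kG) \leq \textrm{silp}(kG)$ separately.

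For the inequality $\textrm{silp}(kG) \leq \textrm{spli}(kG)$, I would assume $n := \textrm{spli}(kG)$ is finite and take a projective $kG$-module $P$. In an injective resolution $0 \to P \to I^0 \to I^1 \to \cdots$, each term $I^j$ satisfies $\textrm{pd}_{kG} I^j \leq n$. A dimension-shifting argument on Ext, symmetric in spirit to the one used in the proof of Proposition \ref{propp65} (but with the roles of projectives and injectives interchanged), then shows that the $n$-th cosyzygy must be injective, yielding $\textrm{id}_{kG} P \leq n$.

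For the reverse inequality $\textrm{spli}(kG) \leq \textrm{silp}(kG)$, which is the place where the $\aleph_0$-Noetherian hypothesis on $k$ plays its essential role, I set $n = \textrm{silp}(kG) = \textrm{silf}(kG)$ and take an injective $kG$-module $I$. Its Pontryagin dual $DI$ is flat as a right $kG$-module, and the involution $kG \cong (kG)^{\mathrm{op}}$ allows me to view it as a flat left $kG$-module; moreover, since $kG$ is free as a $k$-module, $DI$ is flat as a $k$-module as well, so Jensen's theorem gives $\textrm{pd}_k DI \leq 1$. The equality $\textrm{silf}(kG) = n$ yields $\textrm{id}_{kG} DI \leq n$. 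Combining these two bounds with the Ext--Tor duality $\textrm{Ext}^i_{kG}(M, DN) \cong D\textrm{Tor}^{kG}_i(N, M)$ applied to the canonical embedding $I \hookrightarrow DDI$ and a change-of-rings argument from $k$ to $kG$, I would conclude $\textrm{pd}_{kG} I \leq n$.

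The main obstacle is exactly this change-of-rings step: translating Jensen's bound $\textrm{pd}_k DI \leq 1$ into information about $\textrm{pd}_{kG} I$, and simultaneously coordinating it with the bound $\textrm{id}_{kG} DI \leq n$ coming from the Emmanouil--Talelli identity. If that interplay can be managed cleanly, both inequalities follow and the proposition is proved.
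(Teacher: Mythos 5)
The paper disposes of this proposition in two lines by citing \cite[Proposition 4.3]{em} for the inequality $\textrm{spli}(kG)\leq\textrm{silp}(kG)$ (this is where $\aleph_0$-Noetherianness is used), and \cite[Corollary 5.4]{DE} together with the anti-isomorphism $kG\cong(kG)^{\mathrm{op}}$ for the reverse inequality $\textrm{silp}(kG)\leq\textrm{spli}(kG)$. Your plan of splitting into the same two inequalities, and of attaching the $\aleph_0$-Noetherian hypothesis to the same one, is therefore structurally aligned with the paper; but neither of your two arguments goes through as written.

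For the inequality $\textrm{silp}(kG)\leq\textrm{spli}(kG)$: the proposed dimension shift is circular. In an injective resolution $0\to P\to I^0\to I^1\to\cdots$ of a projective module $P$, letting $K$ denote the $n$-th cosyzygy, the isomorphisms $\textrm{Ext}^1_{kG}(M,K)\cong\textrm{Ext}^{n+1}_{kG}(M,P)$ hold because the intermediate terms $I^0,\dots,I^{n-1}$ are injective; the hypothesis $\textrm{pd}_{kG}I^j\leq n$ plays no role in this shift and does not feed into the computation of $\textrm{Ext}^{\bullet}(M,P)$, which lives in the second variable. So proving $K$ injective is literally the same as proving $\textrm{id}_{kG}P\leq n$, and nothing has been gained. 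This inequality is in fact not an elementary statement: it is the content of \cite[Corollary 5.4]{DE}, and its use for group rings hinges on $kG\cong(kG)^{\mathrm{op}}$, neither of which appears in your sketch. The argument in the proof of Proposition \ref{propp65} that you invoke as a model is of a genuinely different shape (it bounds the injective dimension of a projective by exploiting that high syzygies of any module are $k$-projective plus a bound in $\underline{\textrm{cd}}_k$), and does not dualize to what you need here.

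For the inequality $\textrm{spli}(kG)\leq\textrm{silp}(kG)$: you correctly identify $\textrm{silp}=\textrm{silf}$, Jensen's bound on projective dimension of flats over an $\aleph_0$-Noetherian $k$, and Pontryagin duality as the relevant ingredients (these are indeed the tools underlying \cite[Proposition 4.3]{em}). However, you acknowledge yourself that the final change-of-rings step — passing from $\textrm{pd}_k DI\leq 1$ and $\textrm{id}_{kG}DI\leq n$ to $\textrm{pd}_{kG}I\leq n$ — is left unresolved; as written, there is no argument there, only a hope that one exists. Without that step the inequality is not established. So the proposal identifies the right decomposition and the right hypotheses, but both halves contain genuine gaps: one is circular, and the other is explicitly incomplete.
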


\begin{proof}Invoking \cite[Proposition 4.3]{em}, we infer that $\textrm{spli}(kG)\leq\textrm{silp}(kG)$. Since $kG\cong {(kG)}^{\textrm{op}}$, \cite[Corollary 24]{DE} implies that $\textrm{silp}(kG)\leq\textrm{spli}(kG)$. We conclude that $\textrm{silp}(kG)=\textrm{spli}(kG)$, as needed.
\end{proof}

\begin{Corollary}\label{cor67}Let $k$ be a commutative $\aleph_0$-Noetherian ring and $G$ be a group. Then,
	$$\underline{\textrm{cd}}_k (G)\leq \textrm{spli}(kG) \leq \underline{\textrm{cd}}_k (G) +\textrm{gldim}k.$$
\end{Corollary}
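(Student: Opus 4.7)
The plan is to combine the two preceding propositions directly: Proposition \ref{propp65} provides the desired sandwich bound but with $\textrm{silp}(kG)$ in the middle instead of $\textrm{spli}(kG)$, so the whole argument reduces to identifying these two invariants. First I would record the chain
$$\underline{\textrm{cd}}_k G\leq \textrm{silp}(kG) \leq \underline{\textrm{cd}}_k G +\textrm{gldim}k$$
which is exactly the content of Proposition \ref{propp65} and does not require any Noetherian hypothesis on $k$.

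Next, I would invoke the $\aleph_0$-Noetherian hypothesis on $k$ precisely to justify the equality $\textrm{silp}(kG)=\textrm{spli}(kG)$ via Proposition \ref{propp66}. Since $k$ is commutative and $\aleph_0$-Noetherian, the group ring $kG$ inherits the $\aleph_0$-Noetherian condition needed there, and the self-opposite isomorphism $kG\cong (kG)^{\textrm{op}}$ used in that proposition is already available over any commutative coefficient ring. Substituting the equality $\textrm{silp}(kG)=\textrm{spli}(kG)$ into the displayed chain yields
$$\underline{\textrm{cd}}_k G\leq \textrm{spli}(kG) \leq \underline{\textrm{cd}}_k G +\textrm{gldim}k,$$
which is the claim.

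There is essentially no obstacle here, as the corollary is a two-line deduction from previously established results; the only thing to check carefully is that the hypotheses of Propositions \ref{propp65} and \ref{propp66} are both met. In particular, Proposition \ref{propp65} holds for arbitrary commutative $k$ and Proposition \ref{propp66} requires exactly the $\aleph_0$-Noetherian assumption we have imposed, so no extra work is needed. The only minor care required is to note that the bounds remain valid (and informative) also in the infinite cases: if $\textrm{gldim}k=\infty$ the upper bound is vacuous, while if $\underline{\textrm{cd}}_k G=\infty$ the lower bound forces $\textrm{spli}(kG)=\infty$, so the inequalities hold in every case.
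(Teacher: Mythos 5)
Your proposal is correct and matches the paper's proof exactly: the corollary is obtained by combining Proposition \ref{propp65} (the sandwich bound with $\textrm{silp}(kG)$) with Proposition \ref{propp66} (the identification $\textrm{silp}(kG)=\textrm{spli}(kG)$ under the $\aleph_0$-Noetherian hypothesis).
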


\begin{proof}This follows from Propositions \ref{propp65} and \ref{propp66}.
\end{proof}
The following result yields a generalization of \cite[Theorem 2.5]{BDT}.  We denote by ${\widetilde{\textrm{Gcd}}_{k}G}$ the projectively coresolved Gorenstein flat dimension of a group $G$ over a commutative ring $k$ (see \cite{St}).

\begin{Proposition}\label{prop68}Let $k$ be a commutative $\aleph_0$-Noetherian ring of finite global dimension and $G$ be a group. Then, $\textrm{Gcd}_kG={\widetilde{\textrm{Gcd}}_{k}G}=\underline{\textrm{cd}}_k(G)$. 
\end{Proposition}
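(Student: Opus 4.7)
The plan is to leverage Corollary \ref{cor67} to reduce the statement to producing a characteristic module for $G$ over $k$ and then running a projective analogue of Proposition \ref{propara}. The assumption $\textrm{gldim}\,k<\infty$ automatically yields $\textrm{sfli}\,k\le\textrm{gldim}\,k<\infty$, so the characteristic module machinery of Section~3 is available. The inequality $\textrm{Gcd}_kG\le\widetilde{\textrm{Gcd}}_kG$ is immediate from the inclusion ${\tt PGF}(kG)\subseteq{\tt GProj}(kG)$, so the substance is to prove the two equalities $\underline{\textrm{cd}}_kG=\textrm{Gcd}_kG$ and $\widetilde{\textrm{Gcd}}_kG=\textrm{Gcd}_kG$.

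I first treat the case $\underline{\textrm{cd}}_kG<\infty$. By Corollary \ref{cor67}, $\textrm{spli}(kG)$ is finite, hence every $kG$-module has finite Gorenstein projective dimension and in particular $\textrm{Gcd}_kG<\infty$. Invoking \cite[Proposition 2.18(ii)]{St}, I obtain a characteristic module $A$ for $G$ over $k$ with $\textrm{pd}_{kG}A=\textrm{Gcd}_kG$. The crux is then to prove the projective analogue of Proposition \ref{propara}: for every $k$-projective $kG$-module $M$, one has $\textrm{Gpd}_{kG}M\le\textrm{pd}_{kG}A$. Granting this, applying it to arbitrary $k$-projective $M$ shows that $\textrm{Ext}^i_{kG}(M,Q)=0$ for $i>\textrm{pd}_{kG}A$ and every projective $Q$, hence $\underline{\textrm{cd}}_kG\le\textrm{Gcd}_kG$. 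The reverse inequality $\textrm{Gcd}_kG\le\underline{\textrm{cd}}_kG$ is immediate, since $k$ is $k$-projective and, when $\textrm{Gpd}_{kG}k$ is finite, the standard characterization gives $\textrm{Gpd}_{kG}k=\sup\{i:\textrm{Ext}^i_{kG}(k,Q)\ne 0,\,Q\text{ projective}\}$. The identification $\widetilde{\textrm{Gcd}}_kG=\textrm{Gcd}_kG$ then follows because a characteristic module is in particular a weak characteristic module (Remark \ref{remfinal}), so Theorem \ref{theo310} and Corollary \ref{corr44} force ${\tt GProj}(kG)={\tt PGF}(kG)$.

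The main technical step is the projective analogue of Proposition \ref{propara}, but its proof is a faithful translation of the flat case: the $k$-split sequence $0\to k\to A\to\overline{A}\to 0$ tensors over $k$ with $M$ and with iterated tensor powers $\overline{A}^{\otimes j}$ to give short exact sequences $0\to M\otimes_k\overline{A}^{\otimes j}\to M\otimes_k\overline{A}^{\otimes j}\otimes_kA\to M\otimes_k\overline{A}^{\otimes(j+1)}\to 0$; each middle term is $kG$-projective of projective dimension at most $\textrm{pd}_{kG}A$, using that $Q\otimes_kP$ with diagonal action is $kG$-projective whenever $Q$ is $k$-projective and $P$ is $kG$-projective (via the standard twist isomorphism $kG\otimes_kN\cong kG\otimes_kN_{\mathrm{triv}}$). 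Splicing these sequences with a $kG$-projective resolution of $M$ produces a complete projective resolution of the $n$-th syzygy of $M$, where $n=\textrm{pd}_{kG}A$, and verifying that the resulting complex remains acyclic upon applying $\textrm{Hom}_{kG}(-,Q)$ for every projective $Q$ uses the $k$-splittability of $0\to k\to A\to\overline{A}\to 0$ exactly as $k$-purity is used in the flat case. The case $\underline{\textrm{cd}}_kG=\infty$ is then handled by contradiction: if $\textrm{Gcd}_kG$ were finite, the argument just outlined would force $\underline{\textrm{cd}}_kG\le\textrm{Gcd}_kG<\infty$, so $\textrm{Gcd}_kG=\infty$, and then $\widetilde{\textrm{Gcd}}_kG=\infty$ follows from the trivial inequality established at the outset.
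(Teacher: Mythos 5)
Your proposal is correct in outline, but it takes a very different route from the paper. The paper's actual proof is almost entirely a matter of citation: it identifies $\textrm{Gcd}_kG=\widetilde{\textrm{Gcd}}_kG$ as exactly \cite[Proposition 2.13]{St}, then for the second equality distinguishes cases on $\widetilde{\textrm{Gcd}}_kG$; if finite it cites \cite[Proposition 2.15]{St}, and if infinite it cites \cite[Theorem 2.14]{St} to conclude $\textrm{spli}(kG)=\infty$ and finishes by Corollary~\ref{cor67}. In contrast, you reconstruct the underlying argument from scratch: you feed $\textrm{gldim}\,k<\infty$ into the characteristic-module machinery of Section~3, use Corollary~\ref{cor67} in the opposite direction to extract a characteristic module via \cite[Proposition 2.18(ii)]{St}, and then prove a projective analogue of Proposition~\ref{propara} (the Cornick--Kropholler-style tensor trick, with $\textrm{Hom}_k(A,-)$ and adjunction replacing $- \otimes_k A$ in the verification step) to bound $\underline{\textrm{cd}}_kG$ by $\textrm{Gcd}_kG$; the reverse inequality is Holm's characterization, and $\widetilde{\textrm{Gcd}}_kG=\textrm{Gcd}_kG$ drops out of Corollary~\ref{corr44}. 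What your approach buys is self-containment: it exposes the mechanism hidden inside the cited \cite{St} results, and in particular makes clear that the equality $\underline{\textrm{cd}}_kG=\textrm{Gcd}_kG$ is of the same ``characteristic module caps the relative dimension of all $k$-nice modules'' shape as the flat case. The cost is length and a few sublemmas you only sketch, above all the projective analogue of Proposition~\ref{propara}. One place to tighten: when you deduce from $\textrm{spli}(kG)<\infty$ that every $kG$-module has finite Gorenstein projective dimension, make explicit that this uses $\textrm{silp}(kG)\le\textrm{spli}(kG)$ (valid since $kG\cong(kG)^{\mathrm{op}}$, by \cite[Corollary 5.4]{DE}) combined with \cite[Theorem 4.1]{Emm3}; the finiteness of $\textrm{spli}$ alone is not in general sufficient for an arbitrary ring.
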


\begin{proof}The first equality is exactly \cite[Proposition 2.13]{St}. For the second equality, we distinguish the cases ${\widetilde{\textrm{Gcd}}_{k}G}<\infty$ and ${\widetilde{\textrm{Gcd}}_{k}G}=\infty$. In the first case, the equality follows from \cite[Proposition 2.15]{St}. In the second case, invoking \cite[Theorem 2.14]{St}, we infer that $\textrm{spli}(kG)=\infty$ and hence $\underline{\textrm{cd}}_k(G)=\infty$ by Corollary \ref{cor67}, as needed.
\end{proof}
	%By Corollary \ref{cor67}, we obtain that $\underline{\textrm{cd}}_k(G)$ is finite if and only if $\textrm{spli}(kG)$ is finite. Invoking \cite[Theorem 2.14]{St}, we infer that $\textrm{spli}(kG)$ is finite if and only if $\textrm{Gcd}_kG$ is finite. Consequently, the dimension $\underline{\textrm{cd}}_k(G)$ is finite if and only if the Gorenstein cohomological dimension $\textrm{Gcd}_kG$ of $G$ is finite. Thus, we may assume that both dimensions $\underline{\textrm{cd}}_k(G)$ and $\textrm{Gcd}_kG$ are finite. Since the commutative ring $k$ is of finite global dimension, we have ${\textrm{Gcd}}_{k}G={\widetilde{\textrm{Gcd}}_{k}G}$ by \cite[Proposition 2.13]{St}. Therefore, invoking \cite[Proposition 2.15]{St}, we conclude that $\textrm{Gcd}_kG={\widetilde{\textrm{Gcd}}_{k}G}=\underline{\textrm{cd}}_k(G)$, as needed.

The following result describes the relationship between the generalized homological dimension and the generalized cohomological dimension of a group $G$, defined by Ikenaga \cite{In}.

\begin{Theorem}\label{propp16}Let $k$ be a commutative $\aleph_0$-Noetherian ring of finite global dimension and $G$ be a group. Then, $\underline{\textrm{hd}}\;'_k(G)\leq \underline{\textrm{hd}}_k(G)\leq \underline{\textrm{cd}}_k(G)$.
\end{Theorem}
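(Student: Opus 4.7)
The first inequality $\underline{\textrm{hd}}\;'_k(G)\leq \underline{\textrm{hd}}_k(G)$ is immediate, since every $k$-projective $kG$-module is $k$-flat, so the supremum defining $\underline{\textrm{hd}}\;'_k(G)$ is taken over a subclass of the pairs $(M,I)$ used to define $\underline{\textrm{hd}}_k(G)$. The real content is the second inequality, and here the plan is to stitch together the machinery developed earlier in the paper and reduce everything to the Gorenstein cohomological/homological comparison of Theorem \ref{prop312}.

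The plan is to assume $\underline{\textrm{cd}}_k(G)<\infty$ (the claim being trivial otherwise) and first upgrade this finiteness to the finiteness of $\textrm{Ghd}_kG$. Invoking Corollary \ref{cor67}, the hypothesis that $k$ is $\aleph_0$-Noetherian of finite global dimension gives $\textrm{spli}(kG)\leq \underline{\textrm{cd}}_k(G)+\textrm{gldim}k<\infty$. Since every projective module is flat, we get $\textrm{sfli}(kG)\leq \textrm{spli}(kG)<\infty$. The implication $(vii)\Rightarrow(ii)$ of Theorem \ref{theo310} then yields $\textrm{Ghd}_kG<\infty$. (Note that $\textrm{sfli}k\leq \textrm{gldim}k<\infty$, so the hypothesis of Theorem \ref{theo310} is satisfied.)

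Next I would identify $\underline{\textrm{hd}}_k(G)$ with $\textrm{Ghd}_kG$. Since $\textrm{sfli}k<\infty$ and $\textrm{Ghd}_kG<\infty$, Proposition \ref{prop316} gives $\underline{\textrm{hd}}_k(G)=\textrm{Ghd}_kG$. Analogously, I would identify $\underline{\textrm{cd}}_k(G)$ with $\textrm{Gcd}_kG$: by Proposition \ref{prop68}, for $k$ commutative $\aleph_0$-Noetherian of finite global dimension one has $\textrm{Gcd}_kG=\underline{\textrm{cd}}_k(G)$. With both identifications in hand, the inequality reduces to $\textrm{Ghd}_kG\leq \textrm{Gcd}_kG$, which is exactly Theorem \ref{prop312} applied under the standing hypothesis $\textrm{sfli}k<\infty$.

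Combining, we obtain $\underline{\textrm{hd}}_k(G)=\textrm{Ghd}_kG\leq \textrm{Gcd}_kG=\underline{\textrm{cd}}_k(G)$, as required. The proof is essentially a bookkeeping exercise: the only subtle point is making sure that the finiteness of $\underline{\textrm{cd}}_k(G)$ (together with the Noetherian and finite global dimension assumptions on $k$) is strong enough to trigger the hypotheses of Proposition \ref{prop316} and Theorem \ref{prop312}, which are phrased in terms of $\textrm{sfli}(kG)$ and the existence of a weak characteristic module. This is precisely what the chain Corollary \ref{cor67} $\to$ Theorem \ref{theo310} accomplishes, and it is the one nontrivial step that genuinely uses the $\aleph_0$-Noetherian hypothesis (via Proposition \ref{propp66}).
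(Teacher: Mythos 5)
Your argument is correct and uses exactly the same ingredients as the paper's proof -- Proposition \ref{prop68} to identify $\underline{\textrm{cd}}_k(G)$ with $\textrm{Gcd}_kG$, Proposition \ref{prop316} to identify $\underline{\textrm{hd}}_k(G)$ with $\textrm{Ghd}_kG$, and Theorem \ref{prop312} for the comparison $\textrm{Ghd}_kG\leq\textrm{Gcd}_kG$. The only difference is a small detour: you establish the finiteness of $\textrm{Ghd}_kG$ via the chain Corollary \ref{cor67} $\to$ $\textrm{sfli}(kG)<\infty$ $\to$ Theorem \ref{theo310}, whereas the paper gets it directly from $\textrm{Gcd}_kG<\infty$ and Theorem \ref{prop312}; your route works but is slightly longer than necessary.
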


\begin{proof}It suffices to prove the inequality $\underline{\textrm{hd}}_k(G)\leq \underline{\textrm{cd}}_k(G)$. Invoking Proposition \ref{prop68}, we infer that $\textrm{Gcd}_{k}G=\underline{\textrm{cd}}_k(G)$ and hence it suffices to show that $\underline{\textrm{hd}}_k(G)\leq \textrm{Gcd}_{k}G$. We may assume that $\textrm{Gcd}_{k}G$ is finite. Then, Theorem \ref{prop312} implies that $\textrm{Ghd}_{k}G\leq \textrm{Gcd}_{k}G$ and hence $\textrm{Ghd}_{k}G$ is also finite. Therefore, we have ${\textrm{Ghd}}_{k}G=\underline{\textrm{hd}}_k(G)$ by Proposition \ref{prop316}. We conclude that $\underline{\textrm{hd}}_k(G)\leq \textrm{Gcd}_{k}G$, as needed.
\end{proof}

\begin{Corollary}\label{prop16} Let $G$ be a group. Then, $\underline{\textrm{hd}}\;'(G)\leq \underline{\textrm{hd}}(G)\leq \underline{\textrm{cd}}(G)$.
\end{Corollary}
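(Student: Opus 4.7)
\medskip

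The plan is to apply Theorem \ref{propp16} with $k=\mathbb{Z}$. The ring of integers $\mathbb{Z}$ is Noetherian (so in particular $\aleph_{0}$-Noetherian, since every ideal is finitely generated) and has finite global dimension equal to $1$, so it satisfies the hypotheses of Theorem \ref{propp16}.

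It then remains only to match the definitions. Recall that in Section 5 the invariants $\underline{\textrm{hd}}_k$, $\underline{\textrm{hd}}\;'_k$ and $\underline{\textrm{cd}}_k$ are defined for an arbitrary commutative ring $k$ in such a way that when $k=\mathbb{Z}$ the conditions ``$k$-flat'', ``$k$-projective'' specialize exactly to ``$\mathbb{Z}$-torsion free'' and ``$\mathbb{Z}$-free'' used in Ikenaga's original definitions (since a $\mathbb{Z}$-module is flat iff it is torsion free, and a projective $\mathbb{Z}$-module is free). It is already noted in the text that $\underline{\textrm{hd}}(G)=\underline{\textrm{hd}}_{\mathbb{Z}}(G)$, $\underline{\textrm{hd}}\;'(G)=\underline{\textrm{hd}}\;'_{\mathbb{Z}}(G)$ and $\underline{\textrm{cd}}(G)=\underline{\textrm{cd}}_{\mathbb{Z}}(G)$.

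Therefore the chain of inequalities $\underline{\textrm{hd}}\;'_{\mathbb{Z}}(G)\leq \underline{\textrm{hd}}_{\mathbb{Z}}(G)\leq \underline{\textrm{cd}}_{\mathbb{Z}}(G)$ supplied by Theorem \ref{propp16} translates directly into $\underline{\textrm{hd}}\;'(G)\leq \underline{\textrm{hd}}(G)\leq \underline{\textrm{cd}}(G)$, which is what we want. There is no obstacle to overcome here: the corollary is essentially a bookkeeping specialization of the previous theorem, and the entire proof amounts to observing that $\mathbb{Z}$ satisfies the two hypotheses (Noetherian and finite global dimension) and invoking Theorem \ref{propp16}.
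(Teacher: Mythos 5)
Your proof is correct and follows exactly the argument the paper intends (the paper gives no proof, treating the corollary as an immediate specialization of Theorem \ref{propp16}). You correctly verify that $\mathbb{Z}$ satisfies both hypotheses ($\aleph_0$-Noetherian, finite global dimension) and invoke the identifications $\underline{\textrm{hd}}(G)=\underline{\textrm{hd}}_{\mathbb{Z}}(G)$, $\underline{\textrm{hd}}\;'(G)=\underline{\textrm{hd}}\;'_{\mathbb{Z}}(G)$, $\underline{\textrm{cd}}(G)=\underline{\textrm{cd}}_{\mathbb{Z}}(G)$ that the paper has already recorded in Section 5.
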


Serre's Theorem \cite[VIII Theorem 3.1]{Br} yields an equality between cohomological dimensions of a group and subgroups with finite index. In the next result we give an analogous result concerning Gorenstein homological dimensions of groups. 

%For the proof of our analogue we will make use of the following result which was also stated in \cite[Proposition 4.1]{Ren1}.\begin{Proposition}\label{prop66} Let $k$ be a commutative ring such that $\textrm{sfli}k<\infty$ and consider a group $G$ and a subgroup $H$ of $G$. Then $\textrm{Ghd}_kH\leq \textrm{Ghd}_kG$.\end{Proposition}	\begin{proof}It suffices to assume that ${\textrm{Ghd}}_{k}G=n$ is finite. It follows from Corollary \ref{prop1} that there exists a $k$-pure monomorphism of $RG$-modules $\iota: k \rightarrow A$, where $A$ is a $k$-flat $kG$-module and $\textrm{fd}_{kG}A =n$. We note that the $k$-pure monomorphism $\iota$ is restricted to a $k$-pure monomorphism of $kH$-modules $\iota|_H: k \rightarrow A$. Since $\textrm{fd}_{kH}A\leq \textrm{fd}_{kG}A= n$, Corollary \ref{cor1} yields ${\textrm{Ghd}}_k H\leq \textrm{fd}_{kH}A \leq n$, as needed.\end{proof}

\begin{Theorem}\label{theo612}Let $k$ be a commutative ring such that $\textrm{sfli}k<\infty$ and consider a group $G$ and a subgroup $H$ of $G$ of finite index. Then, $\textrm{Ghd}_{k}G=\textrm{Ghd}_{k}H$.
\end{Theorem}

\begin{proof}Since the subgroup $H$ of $G$ is of finite index, Lemma \ref{lem22} yields $\textrm{sfli}(kG)=\textrm{sfli}(kH)$. Moreover, \cite[Proposition 4.1]{Ren1} yields ${\textrm{Ghd}_{k}H}\leq {\textrm{Ghd}_{k}G}$ and hence it suffices to show that ${\textrm{Ghd}_{k}G}\leq {\textrm{Ghd}_{k}H}$. For that, it suffices to assume that ${\textrm{Ghd}_{k}H}$ is finite. Since $\textrm{sfli}k<\infty$, applying Theorem \ref{theo310}, we deduce that $\textrm{sfli}(kH)<\infty$ and hence $\textrm{sfli}(kG)<\infty$ as well. Using again Theorem \ref{theo310}, we infer that ${\textrm{Ghd}}_{k}G<\infty$. Therefore, Proposition \ref{prop316} yields $\textrm{Ghd}_{k}(G)=\underline{\textrm{hd}}_k (G)$ and $\textrm{Ghd}_{k}H=\underline{\textrm{hd}}_k (H)$. Consequently, it suffices to show that $\underline{\textrm{hd}}_k (G)=\underline{\textrm{hd}}_k (H)$. Applying \cite[Proposition 11]{In}, which holds over any commutative ring $k$, we conclude that  $\textrm{Ghd}_{k}G=\textrm{Ghd}_{k}H$.\end{proof}

\begin{Corollary}Let $G$ be a group and $H$ be a subgroup of $G$ of finite index. Then, $\textrm{Ghd}_{\mathbb{Z}}G=\textrm{Ghd}_{\mathbb{Z}}H$.
\end{Corollary}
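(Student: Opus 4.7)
The statement is the specialization of Theorem~\ref{theo612} to the case $k=\mathbb{Z}$, so the plan is simply to verify the hypothesis $\textrm{sfli}\,\mathbb{Z}<\infty$ and invoke that theorem. Concretely, since $\mathbb{Z}$ is a principal ideal domain we have $\textrm{gldim}\,\mathbb{Z}=1$, which forces $\textrm{sfli}\,\mathbb{Z}\leq \textrm{gldim}\,\mathbb{Z}=1<\infty$ (every injective $\mathbb{Z}$-module has flat, even projective, dimension at most one). This is the only input needed.

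With the hypothesis verified, Theorem~\ref{theo612} applied to the commutative ring $k=\mathbb{Z}$ and to the pair consisting of $G$ and the finite-index subgroup $H$ yields directly
\[
\textrm{Ghd}_{\mathbb{Z}}G=\textrm{Ghd}_{\mathbb{Z}}H,
\]
which is exactly the claim. There is no real obstacle here; the corollary is a formal consequence, and the substantive content lies entirely in Theorem~\ref{theo612}, whose proof in turn combines Proposition~\ref{prop66} (monotonicity under subgroups, giving the inequality $\textrm{Ghd}_{\mathbb{Z}}H\leq \textrm{Ghd}_{\mathbb{Z}}G$) with Proposition~\ref{prop316} and Ikenaga's equality $\underline{\textrm{hd}}_{\mathbb{Z}}G=\underline{\textrm{hd}}_{\mathbb{Z}}H$ for finite-index subgroups to obtain the reverse inequality.
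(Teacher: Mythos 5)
Your proposal is correct and is exactly the paper's intended argument: the corollary is stated immediately after Theorem~\ref{theo612} as a specialization to $k=\mathbb{Z}$, and your verification that $\textrm{sfli}\,\mathbb{Z}\leq\textrm{gldim}\,\mathbb{Z}=1<\infty$ is the only thing that needs checking.
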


Another property of the Gorenstein homological dimension of groups related to subgroups is given in the next proposition.

\begin{Proposition}Let $k$ be a commutative ring and $G$ be a group such that $G={\lim\limits_{\longrightarrow}}_{\alpha} G_{\alpha}$, where $\{G_{\alpha}\}$ is a family of subgroups of $G$. Then, ${\textrm{Ghd}_k G \leq \textrm{sup}_{\alpha}\textrm{Ghd}_k G_{\alpha}}$.
\end{Proposition}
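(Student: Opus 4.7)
The plan is to realise the trivial $kG$-module $k$ as a directed colimit of induced modules, each of whose Gorenstein flat dimension is controlled by some $\textrm{Ghd}_k G_\alpha$, and then to transfer the dimension bound to the colimit via Lemma \ref{lemX}.

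First I would assume that $n := \sup_\alpha \textrm{Ghd}_k G_\alpha$ is finite, since otherwise the inequality is trivial. For each index $\alpha$, the $kG_\alpha$-module $k$ satisfies $\textrm{Gfd}_{kG_\alpha} k = \textrm{Ghd}_k G_\alpha \leq n$, so by Lemma \ref{lem25}(ii) the induced $kG$-module $\textrm{Ind}_{G_\alpha}^G k \cong k[G/G_\alpha]$ satisfies
\[
\textrm{Gfd}_{kG}\bigl(\textrm{Ind}_{G_\alpha}^G k\bigr) \leq \textrm{Gfd}_{kG_\alpha} k \leq n.
\]
For $\alpha \leq \beta$ the inclusion $G_\alpha \hookrightarrow G_\beta$ induces the natural surjection $k[G/G_\alpha] \twoheadrightarrow k[G/G_\beta]$, giving a directed system of $kG$-modules.

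Next I would identify the directed colimit of this system with the trivial module $k$. The natural augmentation maps $k[G/G_\alpha] \to k$ are compatible and induce a $kG$-linear map $\lim\limits_{\longrightarrow_\alpha} k[G/G_\alpha] \to k$. To see this is an isomorphism, note that two cosets $gG_\alpha, hG_\alpha$ become equal in $k[G/G_\beta]$ as soon as $g^{-1}h \in G_\beta$; since $G = \bigcup_\alpha G_\alpha$ is the directed union of the $G_\alpha$, any finite collection of cosets in a fixed $k[G/G_\alpha]$ is eventually merged further along the system. Hence every element of the colimit is represented by the class of any single coset, and the colimit is the trivial $kG$-module $k$.

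Finally, I would conclude by applying Lemma \ref{lemX}: the full subcategory $\overline{{\tt GFlat}}_n(kG)$ of $kG$-modules of Gorenstein flat dimension at most $n$ is closed under direct limits, and each $\textrm{Ind}_{G_\alpha}^G k$ lies in it, so $k = \lim\limits_{\longrightarrow_\alpha} \textrm{Ind}_{G_\alpha}^G k$ also lies in $\overline{{\tt GFlat}}_n(kG)$. This gives $\textrm{Ghd}_k G = \textrm{Gfd}_{kG} k \leq n = \sup_\alpha \textrm{Ghd}_k G_\alpha$, as required.

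The main (and only non-routine) obstacle is the colimit identification $k \cong \lim\limits_{\longrightarrow_\alpha} k[G/G_\alpha]$; once this is in hand, Lemma \ref{lem25}(ii) and Lemma \ref{lemX} do all the remaining work with no need for any finiteness hypothesis on $k$.
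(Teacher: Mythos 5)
Your proof is correct and follows essentially the same route as the paper's: reduce to the case $\sup_\alpha\textrm{Ghd}_k G_\alpha = n < \infty$, use Lemma \ref{lem25}(ii) to bound $\textrm{Gfd}_{kG}(\textrm{Ind}^G_{G_\alpha}k)\leq n$, identify $k\cong\lim\limits_{\longrightarrow_\alpha}\textrm{Ind}^G_{G_\alpha}k$, and then invoke Lemma \ref{lemX}. The only difference is that you spell out the colimit identification, which the paper uses silently; your verification of it is correct.
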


\begin{proof}It suffices to assume that $\textrm{sup}_{\alpha}\textrm{Ghd}_k G_{\alpha}=n<\infty.$ Then, we have $\textrm{Ghd}_k G_{\alpha}\leq n$ for every $\alpha$.
	It follows from Lemma \ref{lem25} (ii) that $\textrm{Gfd}_{kG}(\textrm{Ind}_{G_{\alpha}}^G k) \leq \textrm{Gfd}_{kG_{\alpha}} k=\textrm{Ghd}_k G_{\alpha}\leq n$. Since the class $\overline{{\tt GFlat}}_n(kG)$ is closed under direct limits (see Lemma \ref{lemX}), we obtain that $\textrm{Ghd}_k G=\textrm{Gfd}_{kG}k=\textrm{Gfd}_{kG}({\lim\limits_{\longrightarrow}}_{\alpha}\textrm{Ind}^G_{G_{\alpha}}k)\leq n$, as needed.
\end{proof}		

\section{Bounds for the Gorenstein weak global dimension of group rings} The goal of this section is to construct estimates for the invariant $\textrm{sfli}(kG)$ over any commutative ring $k$ and any group $G$. As a result, we obtain estimates for the Gorenstein weak global dimension $\textrm{Gwgl.dim}(kG)$. Moreover, we generalize \cite[Proposition 4.6]{ET2} and \cite[Theorem 3.11]{Asa} by proving the subadditivity of $\textrm{sfli}(kG)$ under group extensions over any commutative ring $k$.

\begin{Proposition}\label{prop61}Let $k$ be a commutative ring and $G$ be a group. Then, $$\textrm{Ghd}_{k}G\leq \textrm{sfli}(kG).$$
\end{Proposition}

\begin{proof}It suffices to assume that $\textrm{sfli}(kG)$ is finite. Then, $\textrm{sfli}k<\infty$ by Lemma \ref{lemm22} for $H=\{1\}$ and hence Theorem \ref{theo310} yields $\textrm{Ghd}_{k}G<\infty$. Thus, invoking Proposition \ref{prop316}, we infer that $\textrm{Ghd}_{k}G=\underline{\textrm{hd}}_k(G)$. Since $\underline{\textrm{hd}}_k (G) \leq \textrm{sfli}(kG)$ (see Remark \ref{rem315}(i)), we conclude that $\textrm{Ghd}_{k}G\leq \textrm{sfli}(kG)$, as needed.
\end{proof}

\begin{Corollary}\label{cor57} Let $k$ be a commutative ring and $G$ be a group. Then, $$\textrm{max}\{\textrm{Ghd}_{k}G,\textrm{sfli}k\}\leq \textrm{sfli}(kG)\leq \textrm{Ghd}_{k}G+\textrm{sfli}k.$$
\end{Corollary}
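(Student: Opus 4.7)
The upper bound $\textrm{sfli}(kG)\leq \textrm{Ghd}_k G+\textrm{sfli}k$ is not new: it is precisely the unconditional statement of Proposition \ref{propdion}, which we may quote directly with no additional argument. All of the work therefore goes into verifying the lower bound $\max\{\textrm{Ghd}_k G,\textrm{sfli}k\}\leq \textrm{sfli}(kG)$, and this splits cleanly into two separate inequalities.

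For $\textrm{sfli}k\leq \textrm{sfli}(kG)$, I would apply Lemma \ref{lemm22} to the trivial subgroup $H=\{1\}$ of $G$. Since $kH=k$, the lemma gives $\textrm{sfli}k=\textrm{sfli}(kH)\leq \textrm{sfli}(kG)$ with no further hypothesis on $k$. This also provides the dichotomy used in the next step: if $\textrm{sfli}k=\infty$, then $\textrm{sfli}(kG)=\infty$ as well, so every inequality in the statement becomes trivially true.

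For $\textrm{Ghd}_k G\leq \textrm{sfli}(kG)$, I would argue by cases on whether $\textrm{sfli}k$ is finite. If $\textrm{sfli}k=\infty$, the inequality is trivial by the preceding paragraph. If $\textrm{sfli}k<\infty$, the inequality is exactly the content of Proposition \ref{prop61}. Combining the two cases yields $\textrm{Ghd}_k G\leq \textrm{sfli}(kG)$ unconditionally, and together with $\textrm{sfli}k\leq \textrm{sfli}(kG)$ we obtain the lower bound.

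There is no real obstacle in this proof, since every piece is already available: the upper bound sits ready as Proposition \ref{propdion}, the bound $\textrm{sfli}k\leq \textrm{sfli}(kG)$ comes from Lemma \ref{lemm22} applied to a trivial subgroup, and the only nontrivial ingredient, $\textrm{Ghd}_k G\leq \textrm{sfli}(kG)$ under $\textrm{sfli}k<\infty$, is Proposition \ref{prop61}. The whole argument is therefore an assembly of existing results; the only subtlety is dispatching the case $\textrm{sfli}k=\infty$ separately so that Proposition \ref{prop61}'s hypothesis is not violated.
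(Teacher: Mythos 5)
Your proof is correct and follows exactly the same route as the paper's: Proposition \ref{propdion} for the upper bound, Lemma \ref{lemm22} with $H=\{1\}$ for $\textrm{sfli}k\leq\textrm{sfli}(kG)$, and Proposition \ref{prop61} for $\textrm{Ghd}_kG\leq\textrm{sfli}(kG)$. Your explicit handling of the case $\textrm{sfli}k=\infty$ (so that the hypothesis of Proposition \ref{prop61} is respected) is a small but welcome extra care that the paper leaves implicit.
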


\begin{proof}The first inequality $\textrm{max}\{\textrm{Ghd}_{k}G,\textrm{sfli}k\}\leq \textrm{sfli}(kG)$ follows from Proposition \ref{prop61} and Lemma \ref{lemm22} for $H=\{1\}$. The second inequality is exactly Proposition \ref{propdion}.\end{proof}

\begin{Corollary}Let $k$ be a commutative ring and $G$ be a locally finite group. Then, $\textrm{sfli}(kG)=\textrm{sfli}k$.
\end{Corollary}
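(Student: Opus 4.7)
The plan is to apply Corollary \ref{cor57} and observe that both outer bounds collapse when $G$ is locally finite, because in that case $\textrm{Ghd}_{k}G=0$.

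First I would invoke Corollary \ref{cor57}, which gives, for any commutative ring $k$ and any group $G$, the two inequalities
\[
\textrm{max}\{\textrm{Ghd}_{k}G,\textrm{sfli}k\}\leq \textrm{sfli}(kG)\leq \textrm{Ghd}_{k}G+\textrm{sfli}k.
\]
Next, since $G$ is locally finite, Corollary \ref{theo62} (specifically the implication $(i)\Rightarrow(ii)$) yields $\textrm{Ghd}_{k}G=0$. Substituting this value into the chain of inequalities above reduces it to
\[
\textrm{sfli}k=\textrm{max}\{0,\textrm{sfli}k\}\leq \textrm{sfli}(kG)\leq 0+\textrm{sfli}k=\textrm{sfli}k,
\]
forcing $\textrm{sfli}(kG)=\textrm{sfli}k$, as required.

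There is no real obstacle here: the corollary is a direct specialization of the preceding estimates, the only ingredient being the vanishing of the Gorenstein homological dimension for locally finite groups. The hypothesis $\textrm{sfli}k<\infty$ is only used implicitly so that both sides of the equality are finite; the argument itself goes through even without it, producing the equality in the extended sense.
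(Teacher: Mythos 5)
Your proof is correct and is essentially the same as the paper's: both invoke Corollary \ref{cor57} together with the vanishing $\textrm{Ghd}_{k}G=0$ for locally finite $G$ from Corollary \ref{theo62}, and then sandwich $\textrm{sfli}(kG)$ between $\textrm{sfli}k$ and $\textrm{sfli}k$. Your closing remark about the hypothesis $\textrm{sfli}k<\infty$ being dispensable (in the extended sense) is also accurate, since the relevant pieces of Corollary \ref{cor57} you use --- namely $\textrm{sfli}k\leq\textrm{sfli}(kG)$ from Lemma \ref{lemm22} and the upper bound from Proposition \ref{propdion} --- hold without that assumption.
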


\begin{proof}This follows from Corollary \ref{cor57} and Theorem \ref{theo62}, since $\textrm{Ghd}_{k}G=0$ for every locally finite group $G$. \end{proof}

\begin{Corollary}\label{cor58} Let $k$ be a commutative ring and $G$ be a group. Then, $$\textrm{max}\{\textrm{Ghd}_{k}G,\textrm{Gwgl.dim}k\}\leq \textrm{Gwgl.dim}(kG)\leq \textrm{Ghd}_{k}G+\textrm{Gwgl.dim}k.$$
\end{Corollary}

\begin{proof}Since $kG\cong {(kG)}^{\textrm{op}}$, this is a direct consequence of Corollary \ref{cor57} and Theorem \ref{theo012}.\end{proof}

\begin{Proposition}\label{prop65}Let $k$ be a commutative ring and consider a group $G$, a normal subgroup $H$ of $G$ and the corresponding quotient group $Q=G/H$. Then, $$\textrm{sfli}(kG) \leq \textrm{sfli}(kH) + \textrm{Ghd}_k Q \leq \textrm{sfli}(kH) + \textrm{sfli}(kQ).$$
\end{Proposition}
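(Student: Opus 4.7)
The second inequality is essentially free: since $\textrm{sfli}\,k<\infty$, Proposition \ref{prop61} applied to the group $Q$ gives $\textrm{Ghd}_k Q\leq \textrm{sfli}(kQ)$. So the real content is the first inequality, and one may assume that $m:=\textrm{sfli}(kH)$ and $n:=\textrm{Ghd}_k Q$ are both finite; the plan is then to show $\textrm{fd}_{kG}I\leq m+n$ for every injective $kG$-module $I$.

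Since $\textrm{sfli}\,k<\infty$ and ${\textrm{Ghd}}_{k}Q<\infty$, Corollary \ref{prop1} (or Corollary \ref{cor38}) yields a weak characteristic module for $Q$ over $k$: a $k$-pure monomorphism of $kQ$-modules $\iota:k\to A$ with $A$ being $k$-flat and $\textrm{fd}_{kQ}A=n$. I pull $A$ back along the projection $G\to Q$ and regard it as a $kG$-module on which $H$ acts trivially. For any injective $kG$-module $I$, tensoring the $k$-pure sequence $0\to k\to A\to \overline A\to 0$ with $I$ over $k$ produces a $kG$-exact sequence $0\to I\to A\otimes_k I\to \overline A\otimes_k I\to 0$ (with diagonal action); since $I$ is $kG$-injective this sequence $kG$-splits, so it suffices to prove $\textrm{fd}_{kG}(A\otimes_k I)\leq m+n$.

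The first $m$ layers come from the subgroup $H$. Because $kG$ is free as a $kH$-module, restriction sends $kG$-injectives to $kH$-injectives (by Frobenius reciprocity), so $\textrm{fd}_{kH}I\leq m$. Take a $kG$-projective resolution $\cdots\to P_1\to P_0\to I\to 0$; each $P_i$ is $kH$-flat, hence the $m$-th syzygy $I_m$ is $kH$-flat. Tensoring the truncation $0\to I_m\to P_{m-1}\to\cdots\to P_0\to I\to 0$ with the $k$-flat module $A$ over $k$ preserves exactness and produces a $kG$-exact sequence (diagonal action) whose first $m$ terms $A\otimes_k P_i$ are $kG$-flat by Lemma \ref{rem1}(i). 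This reduces the job to bounding $\textrm{fd}_{kG}(A\otimes_k I_m)\leq n$.

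The remaining $n$ layers come from the quotient $Q$. I choose a $kQ$-flat resolution $0\to F_n\to\cdots\to F_0\to A\to 0$ of length $n$. Each $F_i$ is $k$-flat (as $kQ$ is $k$-free), so tensoring this resolution with the $k$-flat module $I_m$ over $k$ yields an exact sequence of $kG$-modules with diagonal action; now Lemma \ref{lemZ} applies term-by-term to say that $F_i\otimes_k I_m$ is a flat $kG$-module (a flat $k[G/H]$-module tensored with a $kG$-module that is $kH$-flat). Hence $\textrm{fd}_{kG}(A\otimes_k I_m)\leq n$, which combined with the previous paragraph gives $\textrm{fd}_{kG}(A\otimes_k I)\leq m+n$, as desired. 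The main bookkeeping obstacle I expect is handling the two diagonal tensor products simultaneously and tracking when exactness is preserved; the key technical ingredients that make it go through are Lemma \ref{rem1}(i) (for the $P_i$-layer) and Lemma \ref{lemZ} (for the $F_i$-layer), together with the fact that a $kG$-injective module is $kH$-injective.
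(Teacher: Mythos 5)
Your proposal is correct and takes essentially the same route as the paper: both reduce to bounding the flat dimension of $A\otimes_k I$ where $A$ is a weak characteristic module for $Q$ pulled back to $G$, both use the $kG$-split inclusion $I\hookrightarrow A\otimes_k I$ coming from the $k$-purity of $\iota$, and both exploit Lemma~\ref{rem1}(i) and Lemma~\ref{lemZ} to identify the terms of the resulting resolution as flat $kG$-modules. The only presentational difference is that the paper forms the total complex of the double complex $\mathbf{Q}\otimes_k\mathbf{F}$ (via K\"unneth) in one step, whereas you split it into two sequential tensoring stages (first peel off the $\textrm{sfli}(kH)$ layers using a $kG$-projective resolution of $I$, then the $\textrm{Ghd}_kQ$ layers using a $kQ$-flat resolution of $A$); these amount to the same computation.
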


\begin{proof}In view of Proposition \ref{prop61}, it suffices to prove the first inequality. The quotient homomorphism $G\rightarrow Q$ enables us to regard every $kQ$-module as an $kG$-module and every $kQ$-linear map as an $kG$-linear map. It suffices to assume that $\textrm{sfli}(kH)=n$ and $\textrm{Ghd}_k Q =m$ are finite. Then, $\textrm{sfli}k<\infty$ by Lemma \ref{lemm22} and hence Theorem \ref{theo310} yields the existence of a $k$-pure monomorphism of $kQ$-modules $\iota: k \rightarrow A$, where $A$ is a $k$-flat $kQ$-module and $\textrm{fd}_{kQ}A =m<\infty$ (see Corollary \ref{cor38}). It follows that there exists an exact sequence of $kQ$-modules:
\begin{equation*}\label{eq1}\textbf{Q}= 0\rightarrow F_{m} \rightarrow \cdots \rightarrow F_1\rightarrow F_0 \rightarrow A \rightarrow 0,
\end{equation*}
where the $kQ$-module $F_i$ is flat for every $i=0,1,\dots,m$. We consider now an injective $kG$-module $I$ and a truncated $kG$-flat resolution of $I$ of length $n$:
\begin{equation*}\label{eq5}
\textbf{F}=	0\rightarrow I_n \rightarrow F'_{n-1} \rightarrow \cdots \rightarrow F'_0 \rightarrow I \rightarrow 0.
\end{equation*}
Since $\textrm{sfli}(kH)=n$, the $kH$-modules $I_n,F'_{n-1},\dots ,F'_0$ are flat. Since $A$ is $k$-flat, it follows from Künneth's formula that the total complex of the double complex $\textbf{Q}\otimes_k \textbf{F}$ yields a resolution of $A\otimes_k I$ by $kG$-modules of length $n+m$. Applying Lemma \ref{lemZ}, we infer that this resolution consists of flat $kG$-modules and hence $\textrm{fd}_{kG}(A\otimes_k I )\leq n+m$. The $k$-pure monomorphism of $kG$-modules $\iota: k \rightarrow A$ yields a $kG$-monomorphism $I=k\otimes_k I \rightarrow A\otimes_k I$ which is $kG$-split, and hence $\textrm{fd}_{kG}I\leq \textrm{fd}_{kG}(A\otimes_k I )\leq n+m$. It follows that $\textrm{sfli}(kG)\leq n+m$, as needed.\end{proof}

The following result proves that the invariant $\textrm{silf}(kG)$ is also subadditive under group extensions over any commutative $\aleph_0$-Noetherian ring. This generalizes \cite[Theorem 3.11(ii)]{Asa}.

\begin{Proposition}\label{proppp65}Let $k$ be a commutative $\aleph_0$-Noetherian ring and consider a group $G$, a normal subgroup $H$ of $G$ and the corresponding quotient group $Q=G/H$. Then, $$\textrm{silf}(kG) \leq \textrm{silf}(kH) + \textrm{Gcd}_k Q \leq \textrm{silf}(kH) + \textrm{silf}(kQ).$$
\end{Proposition}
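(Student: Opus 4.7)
The plan is to mirror the proof of Proposition \ref{prop65} in the cohomological setting, with tensor products replaced by $\textrm{Hom}$ functors throughout. Using the equality $\textrm{silf}R=\textrm{silp}R$ from \cite[Proposition 2.1]{ET2}, it suffices to bound $\textrm{id}_{kG}P$ by $\textrm{silf}(kH)+\textrm{Gcd}_kQ$ for every $kG$-projective $P$; I assume both summands are finite, say $n$ and $m$. Since $\textrm{spli}k<\infty$ gives $\textrm{sfli}k<\infty$, I would invoke \cite[Proposition 2.18(ii)]{St} to obtain a characteristic module $A$ for $Q$ over $k$: a $k$-projective $kQ$-module equipped with a $k$-split $kQ$-monomorphism $\iota:k\rightarrow A$ and $\textrm{pd}_{kQ}A=m$. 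I would fix a $kQ$-projective resolution $\textbf{P}_A:0\rightarrow P_m\rightarrow\cdots\rightarrow P_0\rightarrow A\rightarrow 0$. Since $kG$ is $kH$-free, $P$ is $kH$-projective with $\textrm{id}_{kH}P\leq n$, so I would truncate a $kG$-injective resolution of $P$ at length $n$ to obtain $\textbf{J}:0\rightarrow P\rightarrow J_0\rightarrow\cdots\rightarrow J_{n-1}\rightarrow J_n\rightarrow 0$ in which $J_0,\ldots,J_{n-1}$ are $kG$-injective and $J_n$ is $kH$-injective.

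Next I would form the double complex $\textrm{Hom}_k(\textbf{P}_A,\textbf{J})$ of $kG$-modules under the diagonal action. Its rows are exact because each $P_i$ is $k$-projective, and its columns are exact because each $J_j$ is $k$-injective (a $kH$-injective module is $k$-injective as $kH$ is $k$-free). Hence the total complex is a $kG$-exact coresolution of $\textrm{Hom}_k(A,P)$ of length $n+m$. The crux is to show that each entry $\textrm{Hom}_k(P_i,J_j)$ is $kG$-injective. For $j<n$, this is immediate from the adjunction $\textrm{Hom}_{kG}(X,\textrm{Hom}_k(P_i,J_j))\cong\textrm{Hom}_{kG}(X\otimes_k P_i,J_j)$, which is exact in $X$ because $P_i$ is $k$-projective and $J_j$ is $kG$-injective. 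For $j=n$ I would invoke the Hom analogue of Lemma \ref{lemZ}: if $M$ is a $k[G/H]$-projective $k[G/H]$-module (viewed as a $kG$-module via $G\rightarrow Q$) and $N$ is a $kG$-module that is $kH$-injective, then the diagonal $kG$-module $\textrm{Hom}_k(M,N)$ is $kG$-injective. This will be established through the chain $\textrm{Hom}_{kG}(X,\textrm{Hom}_k(k[G/H],N))\cong\textrm{Hom}_{kG}(X\otimes_k k[G/H],N)\cong\textrm{Hom}_{kG}(\textrm{Ind}_H^G\textrm{Res}_H^G X,N)\cong\textrm{Hom}_{kH}(X,N)$, using the tensor-hom adjunction, the tensor identity, and the induction-restriction adjunction; the last functor is exact in $X$ exactly when $N$ is $kH$-injective.

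Consequently $\textrm{id}_{kG}\textrm{Hom}_k(A,P)\leq n+m$. The $k$-split monomorphism $\iota$ induces a $kG$-linear, $k$-split epimorphism $\textrm{Hom}_k(A,P)\twoheadrightarrow\textrm{Hom}_k(k,P)=P$; since $P$ is $kG$-projective, this epimorphism admits a $kG$-section, so $P$ is a $kG$-direct summand of $\textrm{Hom}_k(A,P)$ and $\textrm{id}_{kG}P\leq n+m$. Taking suprema yields $\textrm{silf}(kG)=\textrm{silp}(kG)\leq n+m$, which is the first inequality. For the second inequality $\textrm{Gcd}_kQ\leq\textrm{silf}(kQ)$, I would apply the general bound $\textrm{Gpd}_R M\leq\textrm{silp}R$ (a consequence of Holm's $\textrm{Ext}$-vanishing characterisation of $\textrm{Gpd}$ against modules of finite projective dimension, combined with $\textrm{Ext}^{>\textrm{silp}R}_R(-,L)=0$ for $L$ of finite projective dimension) to $R=kQ$ and $M=k$.

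The main obstacle will be establishing the Hom analogue of Lemma \ref{lemZ}, together with the assembly of the double complex so that it actually realises a $kG$-injective coresolution of $\textrm{Hom}_k(A,P)$ of length $n+m$; once these are in hand, the argument runs formally in parallel with the proof of Proposition \ref{prop65}.
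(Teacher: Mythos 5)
Your proposal takes a genuinely different route from the paper. The paper's proof is essentially a translation: it cites \cite[Proposition 2.21]{St} and \cite[Proposition 2.13]{St} to obtain the subadditivity $\textrm{spli}(kG) \leq \textrm{spli}(kH) + \textrm{Gcd}_{k}Q \leq \textrm{spli}(kH) + \textrm{spli}(kQ)$, and then converts $\textrm{spli}$ into $\textrm{silf}$ via Proposition \ref{propp66} ($\textrm{silp}(kG)=\textrm{spli}(kG)$, which is where the $\aleph_0$-Noetherian hypothesis enters) and \cite[Proposition 2.1]{ET2} ($\textrm{silf}=\textrm{silp}$ over any ring). You instead prove the subadditivity directly for $\textrm{silp}$, by building an explicit $kG$-injective coresolution of $\textrm{Hom}_k(A,P)$ from the double complex $\textrm{Hom}_k(\textbf{P}_A,\textbf{J})$, where $A$ is a characteristic module for $Q$ over $k$. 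Your argument for the first inequality is sound: the tensor identity and Frobenius reciprocity chain does show that $\textrm{Hom}_k(M,N)$ with diagonal action is $kG$-injective when $M$ is $k[G/H]$-projective and $N$ is $kH$-injective, and the assembly of the double complex is the precise $\textrm{Hom}$-analogue of the paper's Proposition \ref{prop65}. In fact, because you work directly with $\textrm{silp}$ and cite $\textrm{silf}=\textrm{silp}$ (which is unconditional), your proof of the first inequality $\textrm{silf}(kG)\leq\textrm{silf}(kH)+\textrm{Gcd}_kQ$ does not use the $\aleph_0$-Noetherian hypothesis at all — a gain over the paper's route, which is forced to detour through $\textrm{spli}$ for the same inequality.

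There is, however, a genuine gap in the final step. You assert as a ``general bound'' that $\textrm{Gpd}_R M\leq\textrm{silp}R$, deriving it from Holm's Ext-vanishing characterisation of Gorenstein projective dimension. But Holm's characterisation only computes $\textrm{Gpd}_R M$ via Ext-vanishing once you already know $\textrm{Gpd}_R M<\infty$: given only $\textrm{silp}R<\infty$, the vanishing $\textrm{Ext}^{>\textrm{silp}R}_R(K,Q)=0$ for the $\textrm{silp}R$-th syzygy $K$ and every projective $Q$ gives the left half of a complete projective resolution, but says nothing about the existence of the right half (the coresolution of $K$ by projectives). To close this gap you need to know a priori that $\textrm{Gpd}_{kQ}k<\infty$, which in this setting comes from the conjunction of $\textrm{silp}(kQ)<\infty$ and $\textrm{spli}(kQ)<\infty$ (equivalently, finiteness of the Gorenstein global dimension of $kQ$). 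The bridge $\textrm{silp}(kQ)=\textrm{spli}(kQ)$ is precisely where the $\aleph_0$-Noetherian hypothesis on $k$ becomes essential — the same bridge the paper crosses via Proposition \ref{propp66}. So the hypothesis you thought you had avoided reappears in the second inequality; once you invoke it to establish $\textrm{Gpd}_{kQ}k<\infty$, the rest of your sketch goes through.
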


\begin{proof}Since $k$ is $\aleph_0$-Noetherian, it follows from Proposition \ref{propp66} and \cite[Proposition 2.1]{ET2} that $\textrm{silf}(kG)=\textrm{silp}(kG)=\textrm{spli}(kG)$. Applying \cite[Proposition 2.21]{St} which holds over any commutative ring, \cite[Proposition 2.13]{St} and \cite[Corollary 2.18]{St}, we deduce that $\textrm{spli}(kG) \leq \textrm{spli}(kH) + \textrm{Gcd}_{k}Q \leq \textrm{spli}(kH) + \textrm{spli}(kQ).$  It follows that $\textrm{silf}(kG) \leq \textrm{silf}(kH) + \textrm{Gcd}_k Q \leq \textrm{silf}(kH) + \textrm{silf}(kQ),$ as needed.\end{proof}

\begin{Corollary}\label{cor76}Let $k$ be a commutative ring and consider a group $G$, a normal subgroup $H$ of $G$ and the corresponding quotient group $G/H$. Then, $$\textrm{Gwgl.dim}(kG) \leq \textrm{Gwgl.dim}(kH) + \textrm{Ghd}_k Q\leq \textrm{Gwgl.dim}(kH) + \textrm{Gwgl.dim}(kQ).$$
\end{Corollary}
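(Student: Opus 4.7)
The plan is to reduce the statement to the corresponding inequality for $\textrm{sfli}$ established in Proposition \ref{prop65}, by using the fact that group rings are isomorphic to their opposites. The key bridge is Theorem \ref{theo012}, which states that for any ring $R$, the finiteness of $\textrm{Gwgl.dim}R$ is equivalent to the simultaneous finiteness of $\textrm{sfli}R$ and $\textrm{sfli}R^{\textrm{op}}$, in which case all three invariants coincide.

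First, I would note that the anti-isomorphism $g \mapsto g^{-1}$ of $G$ induces ring isomorphisms $kG \cong (kG)^{\textrm{op}}$, $kH \cong (kH)^{\textrm{op}}$, and $kQ \cong (kQ)^{\textrm{op}}$. In particular $\textrm{sfli}(kG) = \textrm{sfli}((kG)^{\textrm{op}})$, and similarly for $kH$ and $kQ$. Applying Theorem \ref{theo012} to each of the three group rings, we obtain that $\textrm{Gwgl.dim}(kG)$ is finite if and only if $\textrm{sfli}(kG)$ is finite, and when finite they are equal; the analogous equivalences hold for $kH$ and $kQ$.

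Next, I would dispose of the trivial case: if either $\textrm{Gwgl.dim}(kH) = \infty$ or $\textrm{Gwgl.dim}(kQ) = \infty$, then the right-hand sides of both inequalities are $\infty$ and there is nothing to prove. Therefore, assume $\textrm{Gwgl.dim}(kH)$ and $\textrm{Gwgl.dim}(kQ)$ are both finite, so that $\textrm{sfli}(kH) = \textrm{Gwgl.dim}(kH) < \infty$ and $\textrm{sfli}(kQ) = \textrm{Gwgl.dim}(kQ) < \infty$ by the previous step.

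Finally, I would invoke Proposition \ref{prop65} (whose hypothesis $\textrm{sfli}k < \infty$ is in force) to obtain
$$\textrm{sfli}(kG) \leq \textrm{sfli}(kH) + \textrm{Ghd}_k Q \leq \textrm{sfli}(kH) + \textrm{sfli}(kQ).$$
Since the right-hand side is finite, so is $\textrm{sfli}(kG)$, and hence $\textrm{Gwgl.dim}(kG) = \textrm{sfli}(kG)$ by Theorem \ref{theo012}. Substituting the equalities $\textrm{sfli}(kH) = \textrm{Gwgl.dim}(kH)$ and $\textrm{sfli}(kQ) = \textrm{Gwgl.dim}(kQ)$ into the displayed chain yields the desired inequality. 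There is no real obstacle here; the content lies entirely in Proposition \ref{prop65}, and this corollary is a transparent translation via the self-opposite property of group rings.
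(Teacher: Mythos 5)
Your proof is correct and follows precisely the route the paper takes: translate the $\textrm{sfli}$ inequality of Proposition \ref{prop65} into a statement about $\textrm{Gwgl.dim}$ via the self-opposite property $kG \cong (kG)^{\textrm{op}}$ and Theorem \ref{theo012}, handling the infinite case trivially. The paper states this in one line; you have simply unpacked the same argument in full detail.
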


\begin{proof}Since $kG\cong {(kG)}^{\textrm{op}}$, this is a direct consequence of Proposition \ref{prop65} and Theorem \ref{theo012}.
\end{proof}

\textit{Acknowledgement.} The authors wish to thank the anonymous referee for reading the paper carefully and providing useful comments and suggestions.

%\section*{Declarations}
%\noindent\textbf{Conflict of interest.} The authors declare that they have no conflict of interest.

%\noindent\textbf{Data availability.} No data was used for the preparation of this manuscript.

%\noindent\textbf{Funding information.} No funding was received to assist with the preparation of this manuscript.

\bigskip
{\small {\sc Department of Mathematics,
            National and Kapodistrian University of Athens,
             Athens 15784,
             Greece}}

{\em E-mail address:} {\tt kaperonn@math.uoa.gr}

\bigskip
{\small {\sc School of Applied Mathematical and Physical Sciences, National Technical 
		University of Athens,
		Athens 15780,
		Greece}}
	
{\em E-mail address:} {\tt dstergiop@math.uoa.gr}

\end{document}